
\documentclass[preprint,11pt]{elsarticle}




\usepackage{amssymb}
\usepackage{amsthm}


\usepackage{latexsym, amssymb, bm, verbatim, amsmath, amsthm}

\usepackage{fancyhdr}
\usepackage{bbm}
\usepackage{amsthm}
\usepackage{graphicx,graphics}


\DeclareMathOperator{\stem}{stem}
\DeclareMathOperator{\Dom}{dom}
\DeclareMathOperator{\ran}{ran}
\DeclareMathOperator{\NUF}{NUF}

\DeclareMathOperator{\next}{next}
\DeclareMathOperator{\Succ}{Succ}

\def\PP{{\mathbb P}}
\def\QQ{{\mathbb Q}}

\newcommand{\ra}{\rangle}

\newcommand{\al}{\alpha}
\newcommand{\be}{\beta}

\newcommand{\ka}{\kappa}

\renewcommand{\phi}{\varphi}


\newcommand{\bbM}{\mathbb{M}}

\newcommand{\bbP}{\mathbb{P}}



\newcommand{\st}{\mid} 


\DeclareMathOperator{\add}{add}

\DeclareMathOperator{\cof}{cof}

\DeclareMathOperator{\cov}{cov}

\DeclareMathOperator{\dom}{dom}

\DeclareMathOperator{\non}{non}






\newcommand{\calD}{\mathcal{D}}

\newcommand{\calF}{\mathcal{F}}

\newcommand{\calM}{\mathcal{M}}

\newcommand{\calU}{\mathcal{U}}

\newcommand{\fra}{\mathfrak{a}}
\newcommand{\frb}{\mathfrak{b}}

\newcommand{\frd}{\mathfrak{d}}

\newcommand{\frh}{\mathfrak{h}}

\newcommand{\frp}{\mathfrak{p}}
\newcommand{\frr}{\mathfrak{r}}
\newcommand{\frs}{\mathfrak{s}}

\newcommand{\frt}{\mathfrak{t}}
\newcommand{\fru}{\mathfrak{u}}


\newtheorem{thm}{Theorem}
\newtheorem{defn}[thm]{Definition}

\newtheorem{lem}[thm]{Lemma}
\newtheorem{coroll}[thm]{Corollary}
\newtheorem{prop}[thm]{Proposition}



\journal{Annals of Pure and Applied Logic}

\begin{document}

\begin{frontmatter}



\title{Cardinal characteristics at $\ka$ in a small $\fru(\ka)$ model}


\author[label1]{A. D. Brooke-Taylor}
\author[label2]{V. Fischer\corref{cor1}}
\ead{vera.fischer@univie.ac.at}
\author[label2]{S. D. Friedman}
\author[label2]{D. C. Montoya}
\address[label1]{School of Mathematics, University of Bristol, University Walk,
Bristol, BS8 1TW, UK.}
\address[label2]{Kurt G\"odel Research Center, W\"ahringer Strasse 25, 1090 Vienna, Austria.}
\cortext[cor1]{Corresponding author}

\begin{abstract}
We provide a model where $\mathfrak{u}(\ka)= \ka^+ < 2^\ka$ for a supercompact cardinal $\ka$. \cite{GaS:PCC} provides a sketch of how to obtain such a model by modifying the construction in \cite{DzS:UGSSC}.
We provide here a complete proof using a different modification of \cite{DzS:UGSSC} and further study the values of other natural generalizations of classical cardinal characteristics in our model.
For this purpose we generalize some standard facts that hold in the countable case as well as some classical forcing notions and their properties.
\end{abstract}

\begin{keyword}
generalized cardinal characteristics \sep forcing \sep supercompact cardinals
\MSC 03E17 \sep 03E35 \sep 03E55
\end{keyword}

\end{frontmatter}



\section{Introduction}

Cardinal invariants on the Baire space $\omega^\omega$ have been widely studied
and understood. Since 1995 with the Cummings-Shelah paper \cite{JC:SS}, the study of the generalization of these cardinal notions
to the context of uncountable cardinals and their interactions has been
developing. By now, there is a wide literature on this topic. Some key references (at least for the purposes of this paper) are
\cite{BHZ:MFN}, \cite{JC:SS} and \cite{SZ}.

In \cite{DzS:UGSSC} D\v{z}amonja and Shelah construct a model with a universal graph at
the successor of a strong limit singular cardinal of countable cofinality.
A variant of this model, as pointed out by Garti and Shelah in \cite{GaS:PCC}, witnesses
the consistency of $\mathfrak{u}(\ka)= \ka^+ < 2^\ka$ (Here $\mathfrak{u}(\ka)= \min\{ \lvert \mathcal{B}\lvert : \mathcal{B} $ is an base for a uniform ultrafilter on $\ka\} $). See also \cite{ABT:SukLck}.

Here we present a modification of the forcing construction used by D\v{z}amonja and Shelah, which allows us to prove that if $\ka$ is a supercompact cardinal and $\ka < \ka^\ast$ with $\ka^*$ regular, then there is a generic extension of the universe in which cardinals have not been changed and $\mathfrak{u}(\ka)=\ka^\ast$. The idea of our construction originates in \cite{FL:CCU} and states that if after the iteration $\ka$
is still supercompact (which can be guaranteed by using the Laver preparation)
and we take a normal measure $\mathcal{U}$ on $\kappa$ in the final extension,
then there is a set of ordinals of order type $\ka^\ast$ such that the restrictions
of $\mathcal{U}$ to the corresponding intermediate extensions coincide with ultrafilters
which have been added generically (see Lemma~\ref{ultrafilter}).
In addition, to obtain $\mathfrak{u}(\kappa) =\kappa^\ast$ we further
ensure that each of these restricted ultrafilters contains a Mathias generic for its smaller restrictions, yielding then an ultrafilter generated by these $\kappa^\ast$-many Mathias generics.

Moreover our construction allows us to decide the values of many of the higher analogues of the known classical cardinal characteristics of the continuum, as we can interleave arbitrary $\ka$-directed closed
posets cofinally in the iteration.  The detailed construction of our model is presented in Section 3, while our applications appear in Section 4.

Thus our main result, states the following:

\begin{thm}
Suppose $\kappa$ is a supercompact cardinal, $\ka^\ast$ is a regular cardinal with
$\kappa <\kappa^{\ast} \leq \Gamma$ and $\Gamma$ satisfies $\Gamma^\kappa= \Gamma$. Then there is forcing extension in which cardinals have not been changed satisfying:
\begin{align*}
\ka^*&=\fru(\ka)=\frb(\ka)=\frd(\ka)=\fra(\ka)=\frs(\ka)=\frr(\ka)=\cov(\calM_\ka)\\
&=\add(\calM_\ka) =\non(\calM_\ka)=\cof(\calM_\ka) \;\hbox{and}\;2^\kappa=\Gamma.
\end{align*}
If in addition $\gamma<\kappa^*\rightarrow\gamma^{<\kappa}<\kappa^*$, then we can also provide that $\mathfrak{i}(\ka)= \ka^\ast$. If in addition $(\Gamma)^{<\kappa^*}\leq\Gamma$ then we can also provide that $\frp(\ka)=\frt(\ka)=\frh(\ka)=\kappa^*$.
\end{thm}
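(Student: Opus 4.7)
The plan is to construct a $\Gamma$-stage iteration over a Laver-prepared ground model in which $\kappa$ is indestructibly supercompact under $\kappa$-directed closed forcing, arranging along the way that $2^\kappa=\Gamma$. All iterands will be $\kappa$-directed closed, which preserves indestructible supercompactness of $\kappa$ and ensures that no cardinal $\leq\kappa$ is collapsed. I would use supports of size ${<}\kappa^*$ (a natural generalization of countable support) so that the whole iteration is still $\kappa$-directed closed and a suitable $\Gamma^+$-chain condition follows from $\Gamma^\kappa=\Gamma$, preserving all cardinals and giving $2^\kappa=\Gamma$ in the extension.

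The iteration interleaves two kinds of iterands, driven by a bookkeeping function that enumerates, cofinally and with sufficiently many repetitions, names for all potential witnesses to small values of the relevant characteristics. The first kind is the generalized Mathias poset from the D\v{z}amonja--Shelah template, used to add, step by step, the $\kappa^*$-many sets that will form a base for a uniform ultrafilter on $\kappa$. By the mechanism advertised in Lemma~\ref{ultrafilter}, in the final extension a normal measure $\mathcal{U}$ on $\kappa$ exists whose restrictions along an appropriate set of stages of order type $\kappa^*$ coincide with the generically added ultrafilters, so these Mathias generics form a base of size $\kappa^*$, giving $\mathfrak{u}(\kappa)\leq\kappa^*$. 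The second kind of iterand consists of the standard $\kappa$-directed closed posets witnessing that the other characteristics are $\leq\kappa^*$: $\kappa$-Hechler to bound $\mathfrak{b}(\kappa)$ and $\mathfrak{d}(\kappa)$, $\kappa$-Cohen to control the generalized meager-ideal invariants (and, in combination with the earlier work, to obtain $\add(\mathcal{M}_\kappa)=\cof(\mathcal{M}_\kappa)=\kappa^*$), MAD-destroying and independence-destroying posets for $\mathfrak{a}(\kappa)$ and $\mathfrak{i}(\kappa)$, and reaping/splitting generics (which also come from the Mathias component) for $\mathfrak{r}(\kappa)$ and $\mathfrak{s}(\kappa)$.

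After carrying out the iteration I would argue the lower bounds and upper bounds separately. The lower bounds $\kappa^+\leq\mathfrak{x}(\kappa)$ are the standard generalizations from the countable case, developed in Section~2; the upper bounds follow because, by bookkeeping, every ${<}\kappa^*$-sized candidate witness in the final model appears at some intermediate stage and is killed later in the iteration by the appropriate generic. The chain-condition argument handles the usual reflection: any name for a bounding family/MAD family/independent family/etc.\ of size ${<}\kappa^*$ lives at some proper initial segment of the iteration and is diagonalized against by the next relevant generic. Under the stronger hypothesis $\gamma<\kappa^*\rightarrow\gamma^{<\kappa}<\kappa^*$ the $\mathfrak{i}(\kappa)=\kappa^*$ clause goes through; under the further hypothesis $\Gamma^{<\kappa^*}\leq\Gamma$ the iteration can additionally diagonalize against all ${<}\kappa^*$-sized filter bases and towers, so $\mathfrak{p}(\kappa),\mathfrak{t}(\kappa),\mathfrak{h}(\kappa)$ also become $\kappa^*$.

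The hard part will be the ultrafilter clause, i.e.\ verifying the hypotheses of Lemma~\ref{ultrafilter} in the presence of all the interleaved directed-closed iterands. One must choose the Mathias iterands so that at each stage the generic added actually belongs to the restriction of the final normal measure at the \emph{next} relevant stage, and so that the thread of restrictions is coherent along a club of length $\kappa^*$; this requires careful synchronization of the bookkeeping with the supercompactness embeddings used to analyze $\mathcal{U}$. The remaining technical hurdle is to check that introducing the extra $\kappa$-directed closed posets for the other cardinal characteristics does not disturb either this ultrafilter argument or the $\Gamma^+$-cc analysis; both should follow from the fact that those posets are absorbed by the iteration machinery of \cite{DzS:UGSSC} essentially for free, since they are $\kappa$-directed closed and of size ${\leq}\Gamma$.
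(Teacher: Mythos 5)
Your overall architecture matches the paper's: a long iteration interleaving generalized Mathias forcing for the ultrafilter with arbitrary $\kappa$-centered, $\kappa$-directed closed iterands chosen by bookkeeping, reflection of small families to initial segments for the lower bounds, and the standard inequalities ($\frb(\ka)\le\fra(\ka)$, $\frd(\ka)\le\fri(\ka)$, $\frp(\ka)\le\frt(\ka)\le\frh(\ka)\le\frs(\ka)$) to close the gaps. But there are concrete problems with the setup. First, the support: you propose ${<}\kappa^*$-supports, whereas the argument needs the Mathias and directed parts of each condition to have support of size ${<}\kappa$, so that the cones $\PP_i\downarrow p$ are essentially ${<}\kappa$-support iterations of $\kappa$-centered posets, hence $\kappa^+$-cc with dense subsets of size $\Gamma$ (Lemma~\ref{ccc}); this is exactly what makes every name for a subset of $\kappa$ live on a bounded initial segment below a stronger condition (Corollary~\ref{smallinit}) and what preserves cardinals. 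With ${<}\kappa^*$-supports neither that chain condition nor the reflection you invoke is available, and a $\Gamma^+$-cc is far too weak to reflect names for subsets of $\kappa$ to bounded stages. Second, the length: the forcing must be cut at an ordinal of cofinality exactly $\kappa^*$ (the paper takes $\sup S$ from Lemma~\ref{ultrafilter}); in a length-$\Gamma$ iteration the cofinal sequences of generics witness upper bounds of size $\cf(\Gamma)$, not $\kappa^*$, so $\frd(\ka)\le\kappa^*$, $\frs(\ka)\le\kappa^*$, $\non(\calM_\ka)\le\kappa^*$, etc., would fail unless $\cf(\Gamma)=\kappa^*$, which is not assumed.

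Third, and most substantively for this theorem (whose new content beyond Theorems~\ref{thm_uc} and~\ref{thm_uc_simple} is precisely $\fra,\fri,\frp,\frt,\frh$): your mechanism of killing every ${<}\kappa^*$-sized candidate witness only yields \emph{lower} bounds. The upper bounds $\fra(\ka)\le\kappa^*$ and $\fri(\ka)\le\kappa^*$ require exhibiting a $\kappa$-mad family and a $\kappa$-maximal independent family of size exactly $\kappa^*$ in the final model, and maximality of a family built along the iteration is not automatic: one needs iterands with a sealing property --- $\bar{\QQ}(\mathcal{A},\kappa)$, whose generic meets every $Y\notin\mathcal{I}_{\mathcal{A}}$ in a set of size $\kappa$ (Lemma~\ref{mad}), and $\hat{\QQ}(\mathcal{I},\kappa)$, whose generic destroys independence of $\mathcal{I}\cup\{Z,Y\}$ for every old $Z\notin\mathcal{I}$ (Lemma~\ref{i}) --- placed along a cofinal sequence of odd stages of order type $\kappa^*$, combined with the reflection of any candidate name to a bounded stage. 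Your proposal neither constructs these witnesses nor argues their maximality (``MAD-destroying'' posets point in the wrong direction for the upper bound), so the clauses $\fra(\ka)=\fri(\ka)=\kappa^*$ are not established. The remaining pieces --- citing Lemma~\ref{ultrafilter} and the master-condition argument for $\fru(\ka)$, and bookkeeping Mathias forcing over small SIP filter bases for $\frp(\ka)$ under $\Gamma^{<\kappa^*}\le\Gamma$ --- are in line with the paper.
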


In addition, we establish some of the natural inequalities between the invariants (in the countable case these are well known).

\section{Preliminaries}

Let $\kappa$ be a supercompact cardinal. Recall that this means that for all
$\lambda \geq \kappa$ there is an elementary embedding $j: V \to M$ with critical
point $\kappa$, $j(\kappa)> \lambda$ and $M ^\lambda \subseteq M$.

One of the main properties of supercompact cardinals that will be used throughout
the paper is the existence of the well-known Laver preparation,
which makes the supercompactness of $\kappa$
indestructible by subsequent forcing
with $\kappa$-directed-closed partial orders.

\begin{thm}[Laver, \cite{Lav:prep}]
If $\kappa$ is supercompact, then there exists a $\kappa$-cc partial ordering $S_\kappa$ of size $\kappa$ such that
 in $V^{S_\kappa}$, $\kappa$ is supercompact and remains supercompact after forcing with any $\kappa$-directed closed
 partial order.
\end{thm}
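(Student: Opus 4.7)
The plan is to prove this via the standard Laver‐function technique. First I would establish the existence of a \emph{Laver function} $f : \ka \to V_\ka$ with the property that for every set $x$ and every $\la \geq \ka$ with $|\trcl(x)| \leq \la$, there is an elementary embedding $j : V \to M$ witnessing $\la$-supercompactness of $\ka$ such that $j(f)(\ka) = x$. The existence of such an $f$ is proved by a minimal-counterexample argument: if no such $f$ existed, one could define (by recursion on $\al < \ka$) a function $f$ together with a pair $(x_\al, \la_\al)$ witnessing failure at each stage, then reflect the whole construction through a sufficiently supercompact embedding to derive a contradiction.

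Next I would use $f$ to guide an Easton-support iteration $S_\ka = \langle (P_\al, \dot Q_\al) : \al < \ka \rangle$ of length $\ka$. At each inaccessible $\al \leq \ka$ with $f\restricted\al : \al \to V_\al$, set $\dot Q_\al = f(\al)$ provided $f(\al)$ is a $P_\al$-name for an $\al$-directed-closed poset (of size less than the next inaccessible above $\al$); otherwise take $\dot Q_\al$ to be trivial. Standard Easton-support bookkeeping then shows $|S_\ka| = \ka$, and because each factor is $\al$-directed-closed at an inaccessible $\al$ while the support is Easton, $S_\ka$ is $\ka$-cc.

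The heart of the argument is the preservation claim. Let $G$ be $S_\ka$-generic over $V$ and let $Q \in V[G]$ be $\ka$-directed-closed; fix $\la \geq \ka$ and let $H$ be $Q$-generic over $V[G]$. I need to show $\ka$ is $\la$-supercompact in $V[G][H]$. Choose $\la$ large enough that $Q$ (or rather a nice $S_\ka$-name $\dot Q$ for it) lies in $V_\la$, and use the Laver property to pick $j : V \to M$ with $\crit(j) = \ka$, $j(\ka) > \la$, $M^\la \subseteq M$, and $j(f)(\ka) = \dot Q$. By the choice of $\dot Q_\ka$ in the iteration $j(S_\ka)$, we get a factorisation
\[
j(S_\ka) \cong S_\ka \ast \dot Q \ast \dot R
\]
in $M$, where $\dot R$ is a name for an iteration whose least nontrivial stage lies above $\ka$ and which is therefore forced to be $\ka^+$-directed-closed in $M[G][H]$. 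The task is to lift $j$ to $\tilde j : V[G] \to M[G][H][K]$ for some $R$-generic $K$ over $M[G][H]$. Lifting through the first factor is automatic since $j\restricted P_\ka = \id$. To build $K$, one enumerates the dense subsets of $R$ lying in $M[G][H]$ (of which there are at most $j(\ka)^M$-many, and crucially only $\la^+$-many are relevant by $\la$-closure of $M$) and meets them using the directed closure of $R$; then $j[G \ast H]$ is a master condition generating a generic in the usual way. Once $\tilde j$ is defined, $\la$-closure of $M$ lifts to $\la$-closure of the range of $\tilde j$ in $V[G][H]$, witnessing $\la$-supercompactness.

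The main obstacle will be the generic lift: verifying that $\dot R$ is sufficiently closed in $M[G \ast H]$ so that the $j(\ka)$-many dense sets in $M$ can be met inside $V[G \ast H]$, and that $j[G \ast H]$ really does generate an $M$-generic filter for $S_\ka \ast \dot Q$. This requires a careful count of the dense sets using $M^\la \subseteq M$ and an explicit construction of the master condition, together with the observation that the tail $\dot R$ is forced to be $\ka^+$-directed-closed (not merely $\ka$-closed) in the intermediate model, which is exactly what the Easton support of $j(S_\ka)$ above $\ka$ delivers.
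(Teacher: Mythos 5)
This theorem is quoted from Laver's paper and the text does not prove it; it only records the two ingredients of the construction --- the Laver diamond $h:\kappa\to V_\kappa$ and the reverse Easton iteration $S_\kappa$ guided by $h$ --- which is exactly the content of the first half of your outline. Your second half (the indestructibility argument via factoring $j(S_\kappa)\cong S_\kappa\ast\dot Q\ast\dot R$, building the tail generic by closure, and lifting with a master condition) is the standard proof and is correct in outline, so you are supplying precisely what the paper delegates to the citation.

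Two points need repair before this would count as a proof. First, your justification of the $\kappa$-cc is wrong as stated: directed closure of the iterands has nothing to do with chain conditions. What actually gives $\kappa$-cc is that $\kappa$ is Mahlo, that the spacing condition (the cardinals $\lambda_\alpha$ in the paper's presentation, or your ``size less than the next inaccessible'') forces $|P_\alpha|<\alpha$ at each inaccessible $\alpha<\kappa$, and an Easton-support $\Delta$-system/reflection argument. Second, the dense-set count for building $K$ does not come out to ``$\lambda^+$-many relevant ones'' for the same $\lambda$ in general: the number of maximal antichains of $\dot R$ in $M[G][H]$ is governed by $j(\kappa)$, so the correct bookkeeping is to fix a target degree $\gamma$ of supercompactness first and then choose $\lambda$ much larger (e.g.\ $\lambda\geq 2^{\gamma^{<\kappa}}$ and above the relevant cardinal arithmetic of $S_\kappa\ast\dot Q$), using $M^\lambda\subseteq M$ to close the construction of $K$ under limits of length $\leq\lambda$. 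Relatedly, the master condition for the final lift is a lower bound of $j[H]$ in $j(\dot Q)$ (no master condition is needed for the $S_\kappa$ part, where $j$ is the identity); writing ``$j[G\ast H]$ is a master condition'' conflates the two lifts. With these corrections your argument is the standard one.
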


The main lemma used to obtain this theorem is the statement
that for any supercompact cardinal $\kappa$ there exists
a \emph{Laver diamond}.
That is, there is a function $h: \kappa \to V_\kappa$ such that for
every set $x$ and every cardinal $\lambda$, there is an elementary embedding
$j: V \to M$ with critical point $\kappa$, $j(\kappa)> \lambda$, $M^\lambda \subseteq M$ and $j(h)(\kappa)=x$.\\
Given such a function, the Laver preparation $S_\kappa$ is given explicitly as
a reverse Easton iteration
$(S_\alpha, \dot{R}_\beta: \alpha \leq \kappa, \beta < \kappa)$,
defined alongside a sequence of cardinals
$(\lambda_\alpha: \alpha < \kappa)$ by induction on $\alpha < \kappa$ as follows.

\begin{itemize}
\item If $\alpha$ is a cardinal and $h(\alpha)= (\dot P, \lambda)$, where $\lambda$ is a cardinal, $\dot{P}$ is an $S_\alpha$ name for a $<\alpha$-directed closed forcing,
and for all $\beta < \alpha$, $\lambda_\beta < \alpha$, we let $\dot{R}_\alpha := \dot P$ and $\lambda_\alpha= \lambda$.
\item Otherwise, we
let $\dot{R}_\alpha$ be the canonical name for the trivial forcing and
$\lambda_\alpha= \sup_{\beta<\alpha} \lambda_\beta$.
\end{itemize}

One of the main forcing notions we will use is the following:

\begin{defn}[Generalized Mathias Forcing]\label{def_gmathias}
Let $\ka$ be a measurable cardinal, and let
$\mathcal{F}$ be a $\kappa$-complete filter on $\kappa$.
The Generalized Mathias Forcing $\bbM^\ka_\calF$ has,
as its set of conditions,
$\{ (s, A): s \in [\kappa]^{<\kappa}\text{ and }A \in \mathcal{F} \}$,
and the ordering given by
$(t,B) \leq (s,A)\text{ if and only if }
t \supseteq s, B\subseteq A\text{ and }t \setminus s \subseteq A$.
We denote by $\mathbbm{1}_\mathcal{F}$ the maximum element of $\bbM^\ka_\mathcal{F}$,
that is $\mathbbm{1}_\mathcal{F}=(\emptyset, \ka)$.
\end{defn}

In our main forcing iteration construction we work exclusively with generalized Mathias posets $\mathbb{M}^\kappa_{\mathcal{U}}$, where $\mathcal{U}$
is a $\kappa$-complete ultrafilter. In our applications however, we will be working with arbitrary $\kappa$-complete filters.

\begin{defn}\label{kacentred}
A partial order $\bbP$ is:
\begin{itemize}
 \item
 \emph{$\ka$-centered} if there is a partition
$\{\bbP_\al\st\al <\ka\}$ of $\bbP$
such that for each $\al< \ka$, every pair of conditions $p,q \in \bbP_\al$ has
a common extension in $\mathbb{P}_\alpha$;

\item {\emph{$\ka$-directed closed}} if for every directed set $D \subseteq \bbP$
of size $\lvert D \lvert < \ka$ there is a condition $p \in \bbP$ such that $p \leq q$ for all
$q \in D$.
\end{itemize}
\end{defn}

\section{The small $\mathfrak{u}(\kappa)$ model}


Let $\Gamma$ be such that $\Gamma^\kappa=\Gamma$. We will define
an iteration $\langle \PP_\alpha,\dot{\QQ}_\beta:\alpha\leq\Gamma^+,\beta<\Gamma^+\ra$ of
length $\Gamma^+$ recursively as follows:

{\emph{If $\alpha$ is an even ordinal}} (abbreviated $\alpha\in\hbox{EVEN}$), let $\NUF$ denote the
set of normal ultrafilters on $\kappa$ in $V^{\PP_\alpha}$. Then let $\QQ_\alpha$ be the poset with underlying set of conditions $\{\mathbbm{1}_{\mathbb{Q}_\al}\} \cup \{ \{\mathcal{U}\} \times \mathbb{M}^\kappa_{\mathcal{U}}: \mathcal{U} \in \NUF \}$ and extension relation stating that $q\leq p$ if and only if either $p=\mathbbm{1}_{\mathbb{Q}_\alpha}$, or there is $\mathcal{U}\in \NUF$ such that $p=(\mathcal{U},p_1)$, $q=(\mathcal{U}, q_1)$ and $q_1\leq_{\mathbb{M}^\kappa_{\mathcal{U}}}p_1$. {\emph{If $\alpha$ is an odd ordinal}} (abbreviated $\alpha\in\hbox{ODD}$), let $\dot{\mathbb{Q}}_\alpha$ be a $\mathbb{P}_\alpha$-name for a $\ka$-centered, $\kappa$-directed closed forcing notion of size at most $\Gamma$.

We define three different kinds of support for conditions $p \in \bbP_{\alpha}$, $\alpha< \Gamma^+$:
First we have the \emph{Ultrafilter Support} $\hbox{USupt}(p)$, that corresponds to the set of ordinals $\beta \in \dom(p) \cap \hbox{EVEN} $ such that
$p\restriction \beta\Vdash_{\PP_\beta}
p(\beta)\neq\mathbbm{1}_{\QQ_\beta}$. Then  the \emph{Essential
  Support} $\hbox{SSupt}(p)$, which consists of all
$\beta\in\Dom(p)\cap\hbox{EVEN}$ such that $\neg (p\restriction
\beta\Vdash_{\PP_\beta} p(\beta)\in \{
\check{\mathbbm{1}}_{\mathbb{Q}_\be}\} \cup \{ (\calU,
\mathbbm{1}_{\mathcal{U}}):\mathcal{U} \in \NUF\})$ (for the
definition of $\mathbbm{1}_{\mathcal{U}}$ see
Definition~\ref{def_gmathias}). Finally, the \emph{Directed Support}
$\hbox{RSupt}(p)$, consists of all $\beta\in \Dom(p) \cap\hbox{ODD}$
such that $\neg (p\restriction \beta\Vdash p(\beta)=
\mathbbm{1}_{\dot{\QQ}_\beta})$.

We require that the conditions in $\PP_{\Gamma^+}$ have support
bounded below $\Gamma^+$ and also that given $p \in \bbP_{\Gamma^+}$
if $\beta \in \hbox{USupt}(p)$ then for all $\alpha \in \beta \cap
\hbox{EVEN}$, $\alpha \in \hbox{USupt}(p)$. Finally we demand that
both $\hbox{SSupt}(p)$ and $\hbox{RSupt}(p)$ have size $< \ka$ and are
contained in $\sup(\hbox{USupt}(p))$, i.e. $\hbox{Supt}(p)$ (the
entire support of $p$) and $\hbox{USupt}(p)$ have the same supremum.

Now, we want to ensure that our iteration preserves cardinals. Let $\PP:=\PP_{\Gamma^+}$.

\begin{lem}
 $\mathbb{P}$ is $\kappa$-directed closed.
\end{lem}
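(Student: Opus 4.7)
The plan is to take any directed $D \subseteq \mathbb{P}$ with $|D| < \kappa$ and recursively construct a lower bound $q = q_D$ on $\alpha \leq \Gamma^+$, using at each stage that the local forcing is $\kappa$-directed closed. At odd $\alpha$, $\dot{\mathbb{Q}}_\alpha$ is by hypothesis a $\mathbb{P}_\alpha$-name for a $\kappa$-directed closed poset: assuming inductively that $q\restriction\alpha$ lies below every $p\restriction\alpha$ for $p\in D$, we let $q(\alpha)$ name a lower bound of $\{p(\alpha) : p \in D\}$, chosen to be $\mathbbm{1}_{\dot{\mathbb{Q}}_\alpha}$ unless some $p \in D$ has $\alpha \in \mathrm{RSupt}(p)$.

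At even $\alpha$, let $D_\alpha = \{p \in D : \alpha \in \mathrm{USupt}(p)\}$. If $D_\alpha = \emptyset$ set $q(\alpha) = \mathbbm{1}_{\mathbb{Q}_\alpha}$. Otherwise, for any $p, p' \in D_\alpha$ directedness supplies $r \in D$ with $r \leq p, p'$, and then $r\restriction\alpha$ forces $r(\alpha) \leq p(\alpha), p'(\alpha)$ in $\mathbb{Q}_\alpha$; since by the definition of $\mathbb{Q}_\alpha$ nontrivial conditions associated with distinct normal ultrafilters are incompatible, this forces the ultrafilter names appearing in $p(\alpha)$ and $p'(\alpha)$ to coincide. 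Since $q\restriction\alpha \leq r\restriction\alpha$ by induction, $q\restriction\alpha$ forces a single common ultrafilter name $\mathcal{U}$ in all $p(\alpha)$ for $p \in D_\alpha$, and the Mathias parts $\{p_1^\alpha : p \in D_\alpha\}$ form a directed subset of $\mathbb{M}^\kappa_\mathcal{U}$ of size $<\kappa$. We then set $q(\alpha)$ to name $(\mathcal{U}, \dot{s})$, where $\dot{s}$ names a Mathias condition with stem the union of the stems $s_p$ (bounded in $\kappa$ since $|D_\alpha| < \kappa$ and $\kappa$ is regular) and set-part the intersection of the $A_p$ (which lies in $\mathcal{U}$ by $\kappa$-completeness); the required side condition follows from pairwise compatibility of the $(s_p, A_p)$.

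It remains to verify the support constraints on $q$. By construction $\mathrm{USupt}(q) = \bigcup_{p \in D} \mathrm{USupt}(p)$, which inherits downward-closure within EVEN from each summand; moreover $\mathrm{SSupt}(q) \subseteq \bigcup_{p \in D} \mathrm{SSupt}(p)$ and $\mathrm{RSupt}(q) \subseteq \bigcup_{p \in D} \mathrm{RSupt}(p)$, so both have size $<\kappa$ by regularity of $\kappa$ and $|D|<\kappa$, and both are contained in $\sup(\mathrm{USupt}(q))$ since the analogous containment holds for each $p\in D$. Thus $\sup(\mathrm{Supt}(q)) = \sup(\mathrm{USupt}(q))$, and $\mathrm{Supt}(q)$ is bounded below $\Gamma^+$ by regularity of $\Gamma^+$ together with $|D| < \kappa \leq \Gamma$. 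The main technical subtlety is the even-stage case: one must cleanly extract a single ultrafilter name $\mathcal{U}$ from the directedness of $D$, thereby reducing to the $\kappa$-directed closure of the fixed Mathias poset $\mathbb{M}^\kappa_\mathcal{U}$.
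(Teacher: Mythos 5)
Your proof is correct and follows essentially the same route as the paper's: a coordinatewise recursive construction of the lower bound, extracting at even stages a single common ultrafilter name from directedness and then appealing to the $\kappa$-directed closure of $\mathbb{M}^\kappa_{\mathcal{U}}$ (which you make explicit via union of stems and intersection of measure-one sets), and at odd stages to that of $\dot{\mathbb{Q}}_\alpha$. Your final paragraph verifying the support requirements on $q$ is a welcome addition that the paper's proof leaves implicit.
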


\begin{proof}
We know that $\mathbb{M}^\kappa_\mathcal{U}$, as well as all iterands
$\mathbb{Q}_\alpha$ for
$\alpha \in \hbox{ODD}$, are $\ka$-directed closed forcings. Take $D=
\{p_\alpha: \alpha < \delta < \kappa\}$ a directed set of conditions
in $\mathbb{P}$. We want to define a common extension $p$ for all
elements in $D$. First define $\Dom (p) = \bigcup_{\alpha< \delta}
\Dom(p_\alpha)$. For $j \in \Dom(p)$ define $p(j)$ by induction on
$j$. We work in $V^{\bbP_j}$ and assume that $p\restriction
j \in  \bbP_j$.

We have the following cases:

\begin{itemize}
\item if $j$ is even and $j \notin \bigcup_{\alpha< \delta} \hbox{SSupt}(p_\alpha)$, then using
  compatibility we can find at most one normal ultrafilter
  $\mathcal{U}$ such that for some $\alpha< \delta$, $p_\alpha
  \restriction j \Vdash p_\alpha(j)=
  (\mathcal{U},\mathbbm{1}_{\mathcal{U}}) $. If there is such a
  $\mathcal{U}$ define $p(j)= (\mathcal{U},
  \mathbbm{1}_{\mathcal{U}})$, otherwise $p(j)=
  \mathbbm{1}_{\mathbb{Q}_j}$.

\item If $j$ is even and $j \in \hbox{SSupt}(p_\alpha)$ for some $\alpha < \delta$, then again
  using directedness it is possible to find a single ultrafilter $\mathcal{U}$ such that for
  $\alpha < \delta$ with $j \in \hbox{SSupt}(p_\alpha)$, $p_\alpha \restriction j
  \Vdash p_\alpha(j) \in  \mathcal{U} \times \mathbb{M}^\ka_{\mathcal{U}}$, and
  $\Vdash_{\mathbb{P}_j} \mathbb{M}^\ka_{\mathcal{U}}$ is
  $\ka$-directed closed. In the extension $V^{\mathbb{P}_j}$ we can find a
  condition $q$ such that $q \leq p_\alpha(j)$ for all $\alpha <
  \delta$.
  Define $p(j) = q$.

\item If $j$ is odd, use the fact that in the $\mathbb{P}_j$ extension $\mathbb{Q}_j$ is
  $\ka$-directed closed on the directed set $X_j= \{ p_\alpha(j): \alpha < \delta < \ka \}$
  to find $p(j)$ a condition stronger than all the ones in $X_j$.
\end{itemize}
\end{proof}

For any $p \in \bbP_\beta$ , $\beta < \Gamma^+$ let  $\PP_\beta \downarrow p$ denote the set
$\{q\in\PP_\beta: q\leq p\}$.

\begin{lem}\label{ccc}
Let $p\in\PP_{\Gamma^+}$ and let
$i=\sup\hbox{USupt}(p)=\sup\hbox{Supt}(p)$. Then $\PP_i\downarrow
(p\restriction i)$ is $\kappa^+$-cc and has a dense subset of size at
most $\Gamma$.
\end{lem}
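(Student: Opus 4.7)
The plan is to prove both assertions simultaneously by transfinite induction on $\beta\le i$, showing that $\PP_\beta\downarrow(p\restr\beta)$ is $\ka^+$-cc and has a dense subset of size at most $\Gamma$. The structural observation driving everything is that any $q\le p\restr i$ must have $\hbox{USupt}(q)=i\cap\hbox{EVEN}$ (by downward closure in the evens, since $\sup\hbox{USupt}(p\restr i)=i$), with the ultrafilter name $\dot\U_\al$ at each even $\al<i$ already forced by $p\restr\al$, and $\hbox{SSupt}(p\restr i)\subseteq\hbox{SSupt}(q)$, $\hbox{RSupt}(p\restr i)\subseteq\hbox{RSupt}(q)$. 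Consequently, at every coordinate $\al<i$ the relevant iterand for conditions below $p\restr i$ is $\QQ_\al\downarrow p(\al)$, and this is forced to be $\ka$-centered in every case: at odd $\al$ by hypothesis, and at even $\al$ because Mathias forcing $\mathbb{M}^\ka_{\dot\U_\al}$ restricted below any fixed condition is $\ka$-centered via its stem, since $|[\ka]^{<\ka}|=\ka$.

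At a successor $\beta=\al+1$ the induction is routine from the standard two-step iteration: $\PP_\al\downarrow p\restr\al$ is $\ka^+$-cc by induction and $\dot\QQ_\al\downarrow p(\al)$ is forced to be $\ka^+$-cc (being $\ka$-centered), and a nice $\PP_\al$-name for a $\dot\QQ_\al\downarrow p(\al)$-element is coded by $\ka$ antichains of size $\le\ka$ in the inductive dense subset $D_\al$ of size $\le\Gamma$, yielding $\Gamma^\ka=\Gamma$ many. At a limit $\beta$, given $\{q_\xi:\xi<\ka^+\}$ I would apply the $\Delta$-system lemma (valid since $\ka=\ka^{<\ka}$) to the $<\ka$-sized sets $E_\xi:=\hbox{SSupt}(q_\xi)\cup\hbox{RSupt}(q_\xi)$, obtaining $X\in[\ka^+]^{\ka^+}$ with $\{E_\xi:\xi\in X\}$ a $\Delta$-system of root $E$, $|E|<\ka$. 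When $\sup E<\beta$ (automatic if $\cf(\beta)\ge\ka$), the restrictions $\{q_\xi\restr(\sup E+1):\xi\in X\}$ lie in $\PP_{\sup E+1}\downarrow p\restr(\sup E+1)$, which is $\ka^+$-cc by induction; I pick a compatible pair $\xi\ne\xi'$ with common extension $r$ and glue $r$ with the tails of $q_\xi$ and $q_{\xi'}$, which are pairwise compatible above $\sup E$ because their essential and directed supports are disjoint there and the remaining coordinates take their $p$-canonical value. In the remaining case $\sup E=\beta$ (only possible if $\cf(\beta)\le|E|<\ka$), I would further thin $X$ by a recursion along $E$, at each $\al\in E$ using the $\ka$-centeredness (equivalently, $\ka^+$-Knasterness) of $\QQ_\al\downarrow p(\al)$ in the intermediate extension to extract a $\ka^+$-sized subfamily whose values at $\al$ lie in a common centered class.

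For the dense-subset bound at $\beta=i$, every $q\le p\restr i$ is completely determined by its essential plus directed support $\hbox{SSupt}(q)\cup\hbox{RSupt}(q)\subseteq i$ (of size $<\ka$) together with the values of $q$ at each coordinate there, since elsewhere $q$ assumes its $p$-canonical value; this gives an upper bound of $|i|^{<\ka}\cdot\Gamma^{<\ka}\le\Gamma\cdot\Gamma=\Gamma$, using $|i|\le\Gamma$, $\Gamma^{<\ka}\le\Gamma^\ka=\Gamma$, and the inductive count of nice-name values at each coordinate. The hardest step I anticipate is the limit case with $\cf(\beta)<\ka$, where the $\Delta$-system root is cofinal in $\beta$ and cannot be absorbed into an earlier stage of the iteration; this forces the careful recursive thinning along $E$ described above, together with verifying that $\ka^+$-sized refinements survive through $|E|<\ka$ pigeonhole steps, which relies on the $\ka^+$-Knaster consequence of $\ka$-centeredness at each intermediate stage.
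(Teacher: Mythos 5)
Your proposal follows exactly the route the paper takes: the paper's entire proof consists of the observation that below $p\restriction i$ every even coordinate already has its ultrafilter pinned down (since $\hbox{USupt}(p)$ is an initial segment of the evens with supremum $i$), so that $\PP_i\downarrow(p\restriction i)$ is ``basically a $<\kappa$-support iteration of $\kappa$-centered, $\kappa$-directed closed forcings of size at most $\Gamma$'', followed by a citation to Kunen V.4.9--V.4.10. You have correctly isolated that key structural fact and then carried out the generalization the paper leaves to the reader; the generalized $\Delta$-system argument (using $\kappa^{<\kappa}=\kappa$), the gluing of tails in the case $\sup E<\beta$, and the nice-name count giving a dense set of size $\Gamma^{<\kappa}\cdot\Gamma=\Gamma$ are all correct.

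The one step I would not let stand as written is the thinning in the case $\sup E=\beta$ with $\cf(\beta)<\kappa$. First, the centered class of $q_\xi(\alpha)$ for $\alpha\in E$ is only a $\PP_\alpha$-name, so before any pigeonhole you must extend each $q_\xi$ (a decreasing recursion of length $|E|<\kappa$ through the root, using the $\kappa$-directed closure of the iteration) to a condition deciding all of these classes; this is harmless for chain conditions, since compatibility of the extensions yields compatibility of the originals, but it needs to be said. Second, and more substantively, ``at each $\alpha\in E$ extract a $\kappa^+$-sized subfamily'' describes a nested recursion of length $|E|$, and a decreasing sequence of $<\kappa$ many subsets of $\kappa^+$, each of size $\kappa^+$, can have intersection of size $<\kappa^+$ (already for $|E|=\omega$), so this recursion can die at a limit stage. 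The standard repair is to colour each $\xi$ by the entire function $E\to\kappa$ recording its decided centered classes simultaneously; since $\kappa^{|E|}\le\kappa^{<\kappa}=\kappa<\kappa^+$, a single application of the pigeonhole principle yields a $\kappa^+$-sized subfamily agreeing on all root coordinates at once, after which any two of its members are compatible coordinate by coordinate exactly as you describe. With that replacement your argument is complete and is precisely the ``straightforward generalization'' of Kunen that the paper invokes.
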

\begin{proof}
It is enough to observe that  $\PP_i\downarrow (p\restriction i)$ is
basically a $<\kappa$-support iteration of $\ka$-centered,
$\ka$-directed closed forcings of size at most $\Gamma$.
Then the proof is a straightforward generalization of Lemma V.4.9 -- V.4.10 in \cite{Kun:ST}.
\end{proof}

\begin{lem}\label{anti}
Let $\{\mathcal{A}_\alpha\}_{\alpha < \Gamma}$ be maximal antichains
in $\bbP$ below $p\in\PP$. Let
$j^*=\sup\hbox{Supt}(p)$. Then there is $q\in\PP$ such that
$q\restriction j^*=p$,
$\hbox{Supt}(q)\backslash \hbox{Supt}(p)\subseteq\hbox{USupt}(q)$ and
for all $\alpha<\Gamma$, the set
$\mathcal{A}_\alpha\cap(\PP_{i^*}\downarrow q)$ is a maximal antichain in
$\PP_{i^*}\downarrow q$ (and hence in $\PP\downarrow q$), where $i^*=\sup\hbox{Supt}(q)$.
\end{lem}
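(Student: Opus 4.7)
The plan is to construct $q$ by extending $p$ on the ultrafilter support up to some $i^*<\Gamma^+$, with the ultrafilters at the new coordinates chosen via a diagonal argument that absorbs all $\Gamma$ antichains simultaneously.

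First, $\PP\downarrow p$ inherits the $\kappa^+$-cc via Lemma~\ref{ccc}, so every $\mathcal{A}_\alpha$ has cardinality at most $\kappa$ and hence $|\bigcup_\alpha\mathcal{A}_\alpha|\leq\Gamma$. By regularity of $\Gamma^+$ there is $i^{**}<\Gamma^+$ bounding $\sup\hbox{Supt}(r)$ for all conditions $r$ in this union. I would take $i^*\in[j^*,i^{**}]$ and define $q$ by $q\restriction j^*:=p$, $q(\beta):=\mathbbm{1}_{\dot{\QQ}_\beta}$ for odd $\beta\in[j^*,i^*)$, and $q(\beta)$ a $\PP_\beta$-name for a condition of the form $(\mathcal{U}_\beta,\mathbbm{1}_{\mathcal{U}_\beta})$ at each even $\beta\in[j^*,i^*)$. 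The support requirement $\hbox{Supt}(q)\setminus\hbox{Supt}(p)\subseteq\hbox{USupt}(q)$ is then immediate from this construction.

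The heart of the argument is to define the names $\mathcal{U}_\beta$ cleverly enough that for every $\alpha<\Gamma$, $\mathcal{A}_\alpha\cap(\PP_{i^*}\downarrow q)$ is predense below~$q$. I would proceed by induction on $\beta\in[j^*,i^*)\cap\hbox{EVEN}$, at each stage dovetailing over the $\Gamma$-many pairs $(\alpha,s)$ with $s$ ranging over a dense subset of $\PP_\beta\downarrow(q\restriction\beta)$ of size at most $\Gamma$ (provided by Lemma~\ref{ccc}). At each pair the name $q(\beta)$ below $s$ would be set to force some element $r\in\mathcal{A}_\alpha$ with $r\restriction\beta\leq s$ (obtained by maximality of $\mathcal{A}_\alpha$) to match $q$'s ultrafilter choice at~$\beta$.

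The main obstacle is that ``$r\leq q$'' for $r\in\mathcal{A}_\alpha$ is strictly stronger than ``$r$ compatible with $q$'': the condition $r$ must share $q$'s ultrafilter assignments throughout $\hbox{USupt}(r)\cap\hbox{USupt}(q)$ and have its own ultrafilter support reaching at least $\sup\hbox{USupt}(q)$. Establishing that the diagonal construction yields predensity simultaneously for all $\alpha$ --- specifically, that for every $s\leq q$ in $\PP_{i^*}$ there exists $r\in\mathcal{A}_\alpha$ with both $r\leq q$ and $r$ compatible with $s$ --- is the technically demanding part. It would rely on the interplay between the $\kappa^+$-cc of Lemma~\ref{ccc} (which bounds the dovetailing) and the $\kappa$-directed closedness of the iteration (which lets us consolidate the Mathias components when gluing).
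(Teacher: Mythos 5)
Your opening step is wrong, and it is load-bearing. Lemma~\ref{ccc} does \emph{not} give the $\kappa^+$-cc for $\PP\downarrow p$; it gives it only for the initial segment $\PP_i\downarrow(p\restriction i)$ with $i=\sup\hbox{USupt}(p)$. Beyond $\sup\hbox{Supt}(p)$ the poset is very far from $\kappa^+$-cc: at each even coordinate $\beta$ a condition may select any normal ultrafilter $\mathcal{U}\in\NUF$ of $V^{\PP_\beta}$, and two conditions selecting distinct ultrafilters at the same coordinate are incompatible by the definition of $\QQ_\beta$. Hence the maximal antichains $\mathcal{A}_\alpha$ in $\PP\downarrow p$ need not have size $\leq\kappa$ --- in general nothing bounds them by $\Gamma$, and the supports of their elements can be cofinal in $\Gamma^+$ --- so there is no $i^{**}<\Gamma^+$ dominating $\sup\hbox{Supt}(r)$ for all $r\in\bigcup_\alpha\mathcal{A}_\alpha$, and you cannot fix $i^*$ before the construction begins. (Indeed, the content of the lemma is precisely that only \emph{after} passing to a suitable $q$ does the trace of $\mathcal{A}_\alpha$ live in a $\kappa^+$-cc poset and hence become small.)

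The paper avoids this by determining $i^*$ only at the end of a closing-off recursion, and that is the shape your argument needs. Fix a dense subset of $\bar{\PP}=\PP_{j^*}\downarrow p$ of size at most $\Gamma$ (this is what Lemma~\ref{ccc} legitimately provides). Handle one $w$ from this dense set at a time: by maximality of $\mathcal{A}_0$ there is $r$ extending $w$, the current side condition $p_\xi$ (which agrees with $p$ below $j^*$ and has only ultrafilter support beyond $j^*$), and some member of $\mathcal{A}_0$; then enlarge $p_\xi$ on its ultrafilter part to $p_{\xi+1}$ so that $r\in\PP_{j_{\xi+1}}\downarrow p_{\xi+1}$. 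Because each new $r$ is required to extend the side condition already built, the ultrafilter choices of successive captured conditions cannot conflict --- this is exactly the conflict your ``branching name on a pre-fixed interval $[j^*,i^*)$'' device cannot resolve, since a single $q$ must make a single ultrafilter choice at each even coordinate with which \emph{all} captured members of \emph{all} the $\mathcal{A}_\alpha$ agree, and you cannot know in advance how far out those members' supports reach. Iterating over the dense set, then over all $\Gamma$ antichains, and closing off, regularity of $\Gamma^+$ bounds the resulting support and yields $q$ and $i^*$ simultaneously. Your closing remark that the real work is the predensity verification is correct, but that verification goes through the sequential capture just described rather than through any choice of names on an interval fixed in advance.
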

\begin{proof}
Let $\bar{\PP}:=\PP_{j^*}\downarrow p$ and
let $w\in\bar{\PP}$. Then there is a condition $r$
extending both $w$ and an element of
$\mathcal{A}_0$
and we can find $p_1$ such that
$p_1\restriction j^*=p$ and $r\in \PP_{j_1}\downarrow p_1$,
where $j_1=\sup\hbox{Supt}(p_1)$. Since $\bar{\PP}$ has a dense subset
of size at most $\Gamma$,
in $\kappa^+$-steps we can find $q_0$ such that $q_0\restriction
j^*=p$ and every condition in $\bar{\PP}$ is compatible with an
element of $\mathcal{A}_0\cap (\PP_{j_0^*}\downarrow q_0)$,
where $j_0^*=\sup\hbox{Supt}(q_0)$.

Since we have only $\Gamma$ many antichains
$\{\mathcal{A}_\alpha\}_{\alpha < \Gamma}$
in $\Gamma$ steps we can obtain the desired condition $q$.
\end{proof}

\begin{coroll}\label{smallinit}
If $p \Vdash \dot{X} \subseteq \ka$ for some $\mathbb{P}$-name
$\dot{X}$, then there are $q\leq p$  and $j< \Gamma^+$ such that
$\dot{X}$ can be seen as a $\mathbb{P}_j \downarrow q$-name.
\end{coroll}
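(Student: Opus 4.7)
The plan is to view $\dot{X}\subseteq\ka$ as coded by $\ka$-many maximal antichains --- one per $\alpha<\ka$ deciding ``$\alpha\in\dot{X}$'' --- and then invoke Lemma~\ref{anti} to confine all these antichains simultaneously to a bounded initial segment $\PP_j$ of the iteration.

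First I would note that the assumption $\Gamma^\ka=\Gamma$ forces $\ka\le\Gamma$. For each $\alpha<\ka$ the set of $r\le p$ deciding ``$\alpha\in\dot{X}$'' is dense below $p$, so by a Zorn extraction there is a maximal antichain $\mathcal{A}_\alpha$ in $\PP\downarrow p$ each of whose elements decides ``$\alpha\in\dot{X}$''. Padding with trivial antichains to reach length $\Gamma$ if necessary, Lemma~\ref{anti} applied to $\{\mathcal{A}_\alpha\}_{\alpha<\Gamma}$ yields $q\le p$ with $q\restriction j^{*}=p$ and $i^{*}:=\sup\hbox{Supt}(q)<\Gamma^{+}$ such that each $\mathcal{A}_\alpha\cap(\PP_{i^{*}}\downarrow q)$ is a maximal antichain in $\PP\downarrow q$.

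Setting $j:=i^{*}$, I then define the $(\PP_j\downarrow q)$-name
\[
\dot{Y}:=\{(\check{\alpha},r):\alpha<\ka,\ r\in\mathcal{A}_\alpha\cap(\PP_j\downarrow q),\ r\Vdash_{\PP}\alpha\in\dot{X}\}.
\]
For any $V$-generic $G\ni q$, the maximality clause from Lemma~\ref{anti} yields a (necessarily unique) $r\in\mathcal{A}_\alpha\cap(\PP_j\downarrow q)\cap G$ for every $\alpha<\ka$, and by construction of $\dot Y$ we have $\alpha\in\dot{X}^{G}$ iff $r\Vdash\alpha\in\dot{X}$ iff $\alpha\in\dot{Y}^{G\cap\PP_j}$; hence $q\Vdash\dot{X}=\dot{Y}$, which is the desired conclusion.

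The main point to check is the legitimacy of reinterpreting $\dot{Y}$ as a $(\PP_j\downarrow q)$-name rather than a $\PP$-name: this requires that $G\cap\PP_j$ is $(\PP_j\downarrow q)$-generic over $V$, i.e.\ that $\PP_j$ sits as a complete suborder of $\PP$ below $q$. Given the support structure of the iteration and the form of the conditions produced by Lemma~\ref{anti} (whose support above $j^{*}$ is purely ``ultrafilter coordinates''), this should be routine but is worth confirming carefully; every other step is an immediate consequence of Lemma~\ref{anti} together with the fact that a subset of $\ka\le\Gamma$ is coded by at most $\Gamma$-many antichains.
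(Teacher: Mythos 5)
Your proposal is correct and follows essentially the same route as the paper: fix, for each $\alpha<\kappa$, a maximal antichain below $p$ of conditions deciding $\alpha\in\dot X$, apply Lemma~\ref{anti} to all of them (possible since $\kappa\le\Gamma$), and take $j=\sup\hbox{Supt}(q)$. The additional details you supply --- the explicit name $\dot Y$ and the remark that $\PP_j\downarrow q$ must sit as a complete suborder of $\PP\downarrow q$, which is the standard factorization of the iteration at stage $j$ --- are exactly the routine points the paper's three-line proof leaves implicit.
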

\begin{proof}
For each $\alpha<\kappa$ fix a maximal antichain $\mathcal{A}_\alpha$
of conditions below $p$ deciding if $\alpha$ belongs to $\dot{X}$.
Then, let $q$ be the
condition given by Lemma~\ref{anti}
and take $j:=\sup\hbox{Supt}(q)$. Then $q\leq p$ and $\dot{X}$ can be
seen as a $\PP_{\sup{\hbox{Supt}(q)}}\downarrow q$-name.
\end{proof}

\begin{coroll}
Let $p\Vdash\dot{f}$ is a $\mathbb{P}$-name for a function from
$\Gamma$ into the ordinals. Then there is a function $g \in V$
and $q\leq p$ such that $q\Vdash \dot{f}(\alpha) \in g(\alpha)$ for
$\alpha < \Gamma$ and $\lvert g(\alpha)\lvert \leq \ka$ for all
$\alpha$. In particular, $\bbP$ preserves cofinalities and so
cardinalities.
\end{coroll}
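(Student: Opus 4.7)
The plan is to combine Lemma~\ref{anti} and Lemma~\ref{ccc} to localize all information about $\dot{f}$ into a small $\kappa^+$-cc initial segment of the iteration, so that each value $\dot{f}(\alpha)$ is decided by at most $\kappa$-many conditions of a maximal antichain. Concretely, for each $\alpha<\Gamma$ I would first pick a maximal antichain $\mathcal{A}_\alpha$ of conditions below $p$ such that every $r\in\mathcal{A}_\alpha$ decides the ordinal value of $\dot{f}(\alpha)$; write $r\Vdash\dot{f}(\alpha)=\beta^\alpha_r$ for $r\in\mathcal{A}_\alpha$.

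Next, I would apply Lemma~\ref{anti} to the family $\{\mathcal{A}_\alpha\}_{\alpha<\Gamma}$ to obtain a condition $q\leq p$ such that, writing $i^*=\sup\hbox{Supt}(q)$, the set $\mathcal{A}_\alpha\cap(\PP_{i^*}\downarrow q)$ is a maximal antichain in $\PP_{i^*}\downarrow q$, and hence (by the parenthetical in that lemma) also in $\PP\downarrow q$, for every $\alpha<\Gamma$. Lemma~\ref{ccc} then guarantees that $\PP_{i^*}\downarrow q$ is $\kappa^+$-cc, so $\lvert\mathcal{A}_\alpha\cap(\PP_{i^*}\downarrow q)\rvert\leq\kappa$ for each $\alpha$. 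Setting
\[
g(\alpha):=\{\beta^\alpha_r : r\in \mathcal{A}_\alpha\cap (\PP_{i^*}\downarrow q)\},
\]
produces a function $g\in V$ with $\lvert g(\alpha)\rvert\leq\kappa$, and since the restricted antichain remains maximal below $q$ and each of its elements decides $\dot{f}(\alpha)$, we conclude $q\Vdash\dot{f}(\alpha)\in g(\alpha)$ for all $\alpha<\Gamma$.

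For the ``in particular'' clause, $\kappa$-directed closure of $\PP$ takes care of cardinals $\leq\kappa$. For a regular $\mu>\kappa$ of $V$, any putative change of cofinality would be witnessed by a name for a cofinal $\dot{f}\colon\lambda\to\mu$ with $\lambda<\mu$; running the same argument with $\lambda$ in place of $\Gamma$ -- Lemma~\ref{anti} applies equally well to any family of at most $\Gamma$-many antichains -- yields $g\in V$ whose union $\bigcup_{\alpha<\lambda}g(\alpha)$ has $V$-cardinality at most $\lambda\cdot\kappa<\mu$ and contains the range of $\dot{f}$, contradicting the regularity of $\mu$.

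I do not anticipate any serious obstacle, since Lemmas~\ref{anti} and~\ref{ccc} are precisely tailored for this kind of reflection argument. The one point that deserves care is the conclusion of Lemma~\ref{anti} that $\mathcal{A}_\alpha\cap(\PP_{i^*}\downarrow q)$ is maximal in the \emph{full} forcing $\PP\downarrow q$ and not merely in the initial segment $\PP_{i^*}\downarrow q$; this is what permits the small restricted antichain to continue to decide $\dot{f}(\alpha)$ below $q$ in $\PP$, and is exactly what Lemma~\ref{anti} supplies, so no additional bookkeeping in the tail of the iteration is required.
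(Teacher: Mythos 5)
Your proof is correct and follows essentially the same route as the paper: fix maximal antichains below $p$ deciding each $\dot{f}(\alpha)$, apply Lemma~\ref{anti} to restrict them to maximal antichains of a $\kappa^+$-cc initial segment $\PP_{i^*}\downarrow q$, and read off $g(\alpha)$ from the $\leq\kappa$-many decided values. You in fact spell out two points the paper leaves implicit (why $\lvert g(\alpha)\rvert\leq\kappa$ via Lemma~\ref{ccc}, and the derivation of cofinality preservation), so no changes are needed.
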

\begin{proof}
Let $\mathcal{A}_\alpha$ be a maximal antichain of conditions below
$p$ deciding a value for $\dot{f}(\alpha)$. Use Lemma ~\ref{anti} to
find $q\leq p$ such
that $\mathcal{A}_\alpha \cap \mathbb{P} \downarrow q$ is a maximal
antichain in $\mathbb{P} \downarrow q$ for all $\alpha <
\Gamma$. Finally define the function $g \in V$ as follows: $g(\alpha)=
\{ \beta:
  \exists r \leq q $ such that $r \Vdash \dot{f}(\alpha)=\beta\}$.
\end{proof}

We now present the key lemmas that will allow us to construct the witness for
$\mathfrak{u}(\ka)= \ka^\ast$.

\begin{lem}\label{ultrafilter}
Let $\ka$ be a supercompact cardinal and $\ka^\ast$ be a cardinal satisfying
$\ka < \ka^\ast \leq \Gamma$, $\ka^\ast$ regular. Suppose that $p \in \mathbb{P}$ is such that $p \Vdash \dot{\mathcal{U}}$ is a normal ultrafilter on
$\ka$.\footnote{This is possible because $\ka$ is still supercompact in $V^{\mathbb{P}}$.} Then for some $\alpha<
\Gamma^+ $ there is an extension $q \leq p$ such that $q \Vdash$ $( \dot{\mathcal{U}}_\alpha = \dot{\mathcal{U}}
\cap V[G_\alpha])$. Moreover this can be done for a set of ordinals $S \subseteq \Gamma^+$ of
order type $\ka^\ast$ in such a way that $\forall \alpha \in S (\dot{\mathcal{U}} \cap V_\alpha \in
V[G_\alpha])$ and $\dot{\mathcal{U}} \cap V[G_{\sup S}] \in
V[G_{\sup S}]$. Here $\dot{\mathcal{U}}_\alpha$ is the canonical
name for the ultrafilter generically chosen at stage $\alpha$.
\end{lem}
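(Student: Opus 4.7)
The plan is to prove the single-stage version of the statement (finding one $\al$ and $q \leq p$ with the stated property) by a capturing argument based on Lemma~\ref{anti}, and then to iterate that construction $\ka^\ast$-many times to obtain the full set $S$.

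For the single stage, I would first bound the number of nice $\PP$-names for subsets of $\ka$ below $p$: by Lemma~\ref{ccc} and the assumption $\Gamma^\ka = \Gamma$, there are at most $\Gamma$ such names. Enumerate them as $\{\dot{A}_\be : \be < \Gamma\}$ and for each fix a maximal antichain $\mathcal{A}_\be$ below $p$ deciding ``$\dot{A}_\be \in \dot{\mathcal{U}}$''. Applying Lemma~\ref{anti} to $p$ and the $\Gamma$-many antichains $\{\mathcal{A}_\be\}_{\be<\Gamma}$ produces $q^* \leq p$ with $\al^* := \sup\hbox{Supt}(q^*) < \Gamma^+$ such that every $\mathcal{A}_\be \cap (\PP_{\al^*} \downarrow q^*)$ remains maximal in $\PP_{\al^*} \downarrow q^*$. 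Consequently $q^*$ forces that for each $\dot{A}_\be$, the membership $\dot{A}_\be^G \in \dot{\mathcal{U}}$ is decided already by $G_{\al^*}$; the canonical $\PP_{\al^*}$-name $\dot{\mathcal{V}}$ assembled from the ``yes''-parts of these antichains satisfies $q^* \Vdash \dot{\mathcal{V}} = \dot{\mathcal{U}} \cap V[G_{\al^*}]$. By the downward-closure of $\hbox{USupt}$ on even ordinals, $\al^*$ is a limit of even stages and hence even itself, and one can arrange the Lemma~\ref{anti} construction so that $\al^* \notin \hbox{Supt}(q^*)$, meaning that $q^*(\al^*)$ is trivial. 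Taking $\al := \al^*$ and extending $q^*$ by setting $q(\al) := (\dot{\mathcal{V}}, \check{\mathbbm{1}}_{\dot{\mathcal{V}}})$ yields $q \leq p$ with $q \Vdash \dot{\mathcal{U}}_\al = \dot{\mathcal{V}} = \dot{\mathcal{U}} \cap V[G_\al]$, and only $\hbox{USupt}$ has grown.

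For the order-type-$\ka^\ast$ set $S$, I would iterate the single-stage argument: build sequences $\langle q_\ga, \al_\ga : \ga < \ka^\ast\rangle$ by applying the single-stage construction to $q_\ga$ at successor stages (re-enumerating the nice names to include those arising at new levels), and at limit $\ga$ taking $q_\ga := \bigcup_{\de < \ga} q_\de$. This union is a legitimate condition because each single-stage step only enlarges $\hbox{USupt}$, while $\hbox{SSupt}$ and $\hbox{RSupt}$ inherit their size $< \ka$ from $p$, and $\hbox{USupt}$ has no size restriction within $\Gamma^+$. Let $q$ be a lower bound of the sequence and $S := \{\al_\ga : \ga < \ka^\ast\}$. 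The claim $\dot{\mathcal{U}} \cap V_{\al_\ga} \in V[G_{\al_\ga}]$ for $\al_\ga \in S$ follows at once from $\dot{\mathcal{U}}_{\al_\ga} = \dot{\mathcal{U}} \cap V[G_{\al_\ga}] \in V[G_{\al_\ga}]$ together with $V_{\al_\ga} \subseteq V[G_{\al_\ga}]$. For $\dot{\mathcal{U}} \cap V[G_{\sup S}] \in V[G_{\sup S}]$, note that any nice $\PP_{\sup S}$-name for a subset of $\ka$ has support of size $\leq \ka$ and hence bounded below $\sup S$ (since $\cf(\sup S) = \ka^\ast > \ka$), so $P(\ka)^{V[G_{\sup S}]} = \bigcup_\ga P(\ka)^{V[G_{\al_\ga}]}$, whence $\dot{\mathcal{U}} \cap V[G_{\sup S}] = \bigcup_\ga \dot{\mathcal{U}}_{\al_\ga} \in V[G_{\sup S}]$.

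The main obstacle is in the single-stage argument: arranging that the chosen stage $\al$ is genuinely at an even ordinal where $q^*$ has not already committed to an incompatible ultrafilter, and that the strengthening to $(\dot{\mathcal{V}}, \check{\mathbbm{1}}_{\dot{\mathcal{V}}})$ is a legitimate extension in $\PP$ respecting the support constraints. The downward-closure of $\hbox{USupt}$ together with the choice $\al = \al^*$ resolves this provided the supremum is not attained as a maximum of $\hbox{Supt}(q^*)$, which can be arranged within the application of Lemma~\ref{anti}. A secondary bookkeeping issue is to ensure that over the $\ka^\ast$-long iteration the essential and directed supports remain within their size-$<\ka$ budget; this follows because each capturing step adds only $\hbox{USupt}$-style conditions of the form $(\dot{\mathcal{W}}, \check{\mathbbm{1}}_{\dot{\mathcal{W}}})$.
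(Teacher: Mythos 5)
Your overall strategy matches the paper's (reflect the decisions ``$\dot{A}\in\dot{\calU}$'' into an initial segment by an antichain argument that grows only the ultrafilter support, choose the restricted ultrafilter generically at the resulting stage, iterate $\ka^\ast$ times, and finish with a cofinality argument at $\sup S$), but your single-stage step has a genuine gap: one application of Lemma~\ref{anti} does not suffice, and the paper's proof contains a closing-off argument that your version omits. Concretely, Lemma~\ref{ccc} bounds the number of nice names for subsets of $\ka$ over $\PP_{\alpha_0}\downarrow p$ with $\alpha_0=\sup\hbox{USupt}(p)$ --- not over all of $\PP\downarrow p$, which is far from $\ka^+$-cc because of the ultrafilter-choosing coordinates --- so your $\Gamma$-many names $\dot{A}_\beta$ can only be the $\PP_{\alpha_0}\downarrow p$-names. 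After applying Lemma~\ref{anti} you land at $\alpha^\ast=\sup\hbox{Supt}(q^\ast)>\alpha_0$, and $V[G_{\alpha^\ast}]$ contains new subsets of $\ka$ (for instance the Mathias generics added between $\alpha_0$ and $\alpha^\ast$) whose membership in $\dot{\calU}$ has not been decided by $G_{\alpha^\ast}$. Hence the name $\dot{\calV}$ you assemble is only $\dot{\calU}\cap\mathcal{P}(\ka)^{V[G_{\alpha_0}]}$; it is not $\dot{\calU}\cap V[G_{\alpha^\ast}]$, and in particular it is not an ultrafilter in $V[G_{\alpha^\ast}]$, so $(\dot{\calV},\check{\mathbbm{1}}_{\dot{\calV}})$ is not even a legitimate condition of $\QQ_{\alpha^\ast}$, whose nontrivial conditions require a normal ultrafilter of $V^{\PP_{\alpha^\ast}}$.

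The repair is exactly the fixed-point iteration in the paper's proof: after handling all $\PP_{\alpha_0}\downarrow p$-names and reaching a condition with support $\alpha_1$, handle all names over the new initial segment, and so on, taking limits along a sequence of cofinality $>\ka$ so that, by the $\ka^+$-cc of the restricted poset, every name for a subset of $\ka$ over the limit stage already appears at some earlier round; only at such a closure point $\alpha$ does one get $q\Vdash\dot{\calU}\cap V[G_\alpha]\in V[G_\alpha]$ and may legitimately set $\dot{\calU}_\alpha$ equal to it. The rest of your argument --- the evenness of the closure ordinal, the fact that only the ultrafilter support grows so that $\hbox{SSupt}$ and $\hbox{RSupt}$ keep size $<\ka$, the $\ka^\ast$-fold repetition, and the cofinality argument giving $\dot{\calU}\cap V[G_{\sup S}]\in V[G_{\sup S}]$ --- is in line with the paper and goes through once the single-stage step is corrected.
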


\begin{proof} Let $\alpha_0=\sup\hbox{USupt}(p)$.
Then $\PP_{\alpha_0}\downarrow p$ is $\kappa^+$-cc and has a dense subset of size at most $\Gamma$. Thus there are just $\Gamma$-many
$\mathbb{P}_{\alpha_0} \downarrow p$-names for
subsets of $\ka$. Let $\bar{X}=(\dot{X}_i: i < \Gamma)$ be an enumeration of them.

We view each condition in $\mathbb{P}$ as having three main parts.
The first part corresponds to the choice of ultrafilters in even coordinates
--- the ``$\calU$''s of $r=(\calU,r_1)$ for iterand conditions $r$;
we call this the \emph{Ultrafilter Part}.  The next part corresponds to the
coordinates where we have in addition non-trivial Mathias conditions
(coordinates in $\hbox{SSupt}$),
we call it the \emph{Mathias part}.  Finally the odd coordinates, where the forcing chooses
conditions in an arbitrary $\ka$-centered, $\ka$-directed closed forcing (coordinates in $\hbox{RSupt}$),
we call the \emph{Directed Part}.

Extend $p_0=p$ to a condition $p_1$ deciding
whether $\dot{X}_0\in\dot{\calU}$, and let $p_1'$ be the condition extending
$p_0$ with the same ultrafilter part as $p_1$ and no other change from $p_0$.
Then extend
$p_1'$ again to a condition $p_2 $ which also makes a decision about
$\dot{X}_0$ but either
its Mathias or directed parts are incompatible with the ones corresponding to $p_1$; and correspondingly extend $p_1'$ on its ultrafilter part to $p_2'$.

Continue extending the ultrafilter part,
deciding whether or not $\dot{X}_0 \in \dot{U}$ with an
antichain of different Mathias and directed parts until a maximal antichain is reached.
This will happen
in less than $\ka^+$-many steps. If the resulting condition is called $q_1$ and has support
$\alpha_1 < \Gamma^+$ (without loss of generality it is an odd ordinal),
then the set of conditions in
$\mathbb{P}_{\alpha_1} \downarrow q_1$ which decide whether or not $\dot{X}_0$ belongs to
$\dot{\mathcal{U}}$ is predense in $\mathbb{P}_{\alpha_1}\downarrow q_1$.

Repeat this process $\Gamma$-many times for each element in $\bar{X}$ until reaching a condition
$q_2$ with the same property for all such names. Then do it for all $\mathbb{P}_{\alpha_1} \downarrow
q_2$ names for subsets of $\ka$ and so on.
Let $q$ be the condition obtained once this overall
process closes off with a fixed point.
It follows, that if $G$ is $\mathbb{P}$ generic containing $q$
then $\dot{\mathcal{U}}^G \cap V[G_\alpha]$ is determined by $G_\alpha$ and therefore it is a normal ultrafilter
$U_\alpha$ on $\ka$ in $V[G_\alpha]$. Now extend $q$ once more to length $\alpha+1$ by choosing
$\dot{\mathcal{U}}_\alpha$ to be the name for $\mathcal{U}_\alpha= \dot{U}^G \cap V[G_\alpha]$.

This argument gives us the desired property for a single $\alpha< \Gamma$. To have it
for all $\alpha \in S \cup \{\sup S\}$ we just have to iterate the process $\ka^\ast$-many times
(this is possible because $\ka^\ast< \Gamma$), and then
by cofinality considerations we see that
moreover $\dot{\mathcal{U}} \cap V[G_{\sup S}] \in
V[G_{\sup S}]$.
\end{proof}

\textbf{Remark:} Note that we can choose the domains of our conditions such that they have
size $\Gamma$.

Take $S$ to be a set with the properties of the lemma above; this set will be fixed
for the rest of the paper.

Now, using our Laver preparation $S_\ka$ and Laver function $h$ we choose a supercompactness
embedding $j^\ast: V \to M$ with critical point $\ka$
satisfying $j^\ast(\ka) \geq \lambda$ where $\lambda \geq \lvert S_\ka \ast \mathbb{P} \lvert$, $M ^\kappa
\subseteq M$ and $j^\ast(h)(\ka) = (\mathbb{P}, \lambda)$. Then $j^\ast(S_\kappa)
= S_\kappa \ast \dot{\mathbb{P}} \ast \dot{S}^\ast$ for an appropriate tail iteration $\dot{S}^\ast$ in $M$. Also if we denote $\mathbb{P}'= j^\ast
(\mathbb{P})$ applying $j^\ast$ to $S_\kappa \ast \dot{\mathbb{P}}$ we get $j^\ast(S_\kappa \ast \dot{\mathbb{P}})=
S_\kappa \ast \mathbb{P} \ast \dot{S}^\ast \ast (\mathbb{P}') ^M$.

Consider then
$j_0: V[G_{S_\ka}] \to M[G_{S_\ka}][G_{\mathbb{P}}][H]$ where $G_{S_\ka} \ast G_{\mathbb{P}} \ast H$
is generic for $j(S_\ka \ast \dot{\mathbb{P}})$.
We want to lift again to $j^\ast : V[G_{S_\ka}][G_\mathbb{P}] \to M[G_{S_\ka}][G_{\mathbb{P}}]
[H][G_{\mathbb{P}'}]$ where $\mathbb{P}'= j_0(\mathbb{P})$. We will do this by listing the maximal
antichains below some master condition in $\mathbb{P}'$ extending every condition of the form
$j_0(p)$ for $p \in G_\mathbb{P}$. The obvious master condition comes from choosing a lower
bound $p_0^\ast$ of $j_0[G_\mathbb{P}]$.\footnote{This exists because $j_0[G_\mathbb{P}]$ is directed
and the forcing is sufficiently directed-closed} 

This condition has support contained in $j[\Gamma^+]$ and for each $i < \Gamma^+$ odd
chooses the filter name $\dot{\mathcal{U}}_{j(i)}$ to be $j_0(\dot{\mathcal{U}}_i)$ as well as a
$j(\ka)$-Mathias name with first component $\check{x}_i$, the Mathias generic added by $G_\mathbb{P}$ at stage $i$ of the
iteration. However we will choose a stronger master condition $p^\ast$ with support contained in
$j[\Gamma^+]$ as follows.

($\ast$) If $i < \Gamma^+$ is an even ordinal and for each $A \in U_i$ there is a $G_{\mathbb{P}_i}$
-name $\dot{X}$ such that $A= X^{G_{\mathbb{P}_i}}$ and a condition $p \in G_{\mathbb{P}_i}$
such that $j_0(p)\Vdash \ka \in j_0(\dot{X})$, then $p^\ast(j(i))$ is obtained
from $p_0^\ast(j(i))$ by replacing the first component $x_i$ of its $j(\ka)$-Mathias name
by $x_i \cup \{ \ka\}$.

Otherwise $p^\ast(j(i)) =p_0^\ast(j(i))$.

\begin{lem} The condition $p^\ast$ is an extension on $p_0^\ast$.
If $G_{\mathbb{P}'}$ is chosen to contain $p^\ast$, $j^\ast$ is the resulting lifting of
$j_0$ and $\mathcal{U}$ is the resulting normal ultrafilter on $\ka$ derived from $j^\ast$,
then whenever $\mathcal{U}_i$ is contained in $\mathcal{U}$, we have that $x_i \in \mathcal{U}$
\end{lem}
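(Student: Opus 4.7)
The plan is to verify the two assertions of the lemma separately: that $p^\ast$ extends $p_0^\ast$, and that $x_i\in\mathcal{U}$ whenever $\mathcal{U}_i\subseteq\mathcal{U}$.

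For the extension claim, I would compare $p^\ast$ and $p_0^\ast$ coordinatewise. They agree everywhere except at coordinates $j(i)$ satisfying the hypothesis of $(\ast)$; at such a coordinate both conditions select the filter $j_0(\mathcal{U}_i)$ with the same second Mathias component $B_i$, the only change being that the first Mathias component $x_i$ is replaced by $x_i\cup\{\ka\}$. By the Mathias ordering this reduces to checking $\ka\in B_i$. I would therefore choose $p_0^\ast$ so that, at each such $j(i)$, $B_i$ is a set in $j_0(\mathcal{U}_i)$ lying below $j_0(A)$ for every $A\in\mathcal{U}_i$ appearing as a second Mathias component of some condition in $G_\mathbb{P}$ at stage $i$. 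Hypothesis $(\ast)$ then forces $\ka\in j_0(A)$ for each such $A$, using that $j_0(p)\in G_{\mathbb{P}'}$ for every $p\in G_\mathbb{P}$ (since $p_0^\ast$ is a lower bound of $j_0[G_\mathbb{P}]$). The $j_0(\ka)$-completeness of $j_0(\mathcal{U}_i)$ together with the closure of $M$ then allows the choice $\ka\in B_i$.

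For the second assertion, suppose $\mathcal{U}_i\subseteq\mathcal{U}$. The plan is to derive the hypothesis $(\ast)$ for $i$ from this assumption: for any $A\in\mathcal{U}_i$ with $\mathbb{P}_i$-name $\dot{X}$, membership $A\in\mathcal{U}$ translates into $\ka\in j^\ast(A)=j_0(\dot{X})^{G_{\mathbb{P}'}\cap j(\mathbb{P}_i)}$, so some condition in $G_{\mathbb{P}'}\cap j(\mathbb{P}_i)$ forces $\ka\in j_0(\dot{X})$. A density argument exploiting the factorization of $j(\mathbb{P}_i)$ through $j_0[\mathbb{P}_i]$ --- specifically, that the quotient forcing $j(\mathbb{P}_i)/j_0[G_{\mathbb{P}_i}]$ contributes no new information about the ground-model name $j_0(\dot{X})$ --- should refine this witness to one of the form $j_0(p)$ for some $p\in G_{\mathbb{P}_i}$, which is precisely the content of $(\ast)$. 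Granted $(\ast)$, the definition of $p^\ast$ ensures that the first Mathias component of $p^\ast(j(i))$ is $x_i\cup\{\ka\}$, and by elementarity of the lifting $j^\ast$ the Mathias generic at stage $j(i)$ of $\mathbb{P}'$ equals $j^\ast(x_i)$. Since this generic must contain $x_i\cup\{\ka\}$, in particular $\ka\in j^\ast(x_i)$, so $x_i\in\mathcal{U}$.

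The main obstacle I anticipate lies in the pull-back step linking $\mathcal{U}_i\subseteq\mathcal{U}$ back to $(\ast)$: one needs a careful absoluteness or homogeneity argument showing that whenever some condition in $G_{\mathbb{P}'}\cap j(\mathbb{P}_i)$ forces $\ka\in j_0(\dot{X})$, one can already find a witness in the image $j_0[G_{\mathbb{P}_i}]$. The other components --- the Mathias ordering check, the completeness argument for $B_i$, and the identification of the stage-$j(i)$ Mathias generic with $j^\ast(x_i)$ --- proceed routinely given the preceding setup.
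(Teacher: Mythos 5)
Your proposal follows essentially the same route as the paper: the extension claim reduces to checking that $\ka$ may legally be added to the stem at coordinate $j(i)$ because $(\ast)$ guarantees $\ka\in j^\ast(A)$ for the relevant measure-one sets, and the second claim runs $\mathcal{U}_i\subseteq\mathcal{U}\Rightarrow(\ast)\Rightarrow\ka\in j^\ast(x_i)\Rightarrow x_i\in\mathcal{U}$, exactly as in the text. The one obstacle you single out --- pulling a condition of $G_{\mathbb{P}'}$ forcing $\ka\in j_0(\dot{X})$ back to one of the form $j_0(p)$ with $p\in G_{\mathbb{P}_i}$ --- is precisely the step the paper itself dispatches in one sentence (``$\ka\in j^\ast(A)$ for all $A\in\mathcal{U}_i$ \ldots implies that $(\ast)$ is satisfied at $i$'') with no further justification, so you are not missing any machinery that the paper actually supplies; the only organizational difference is that the paper proves the extension claim by induction on $i$, showing each intermediate condition $p_i^\ast$ extends every $j_0(p)$ for $p\in G_{\mathbb{P}}$, rather than by your direct coordinatewise comparison against a suitably engineered $p_0^\ast$.
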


\begin{proof}
To show the first claim, it is enough to show that for all $i < \Gamma^+$ the condition
$p_i^\ast$ defined as $p^\ast$ but replacing $x_{j(l)}$ by $x_{j(l)} \cup \{ \ka \}$
for $l < i$ satisfying ($\ast$) extends $p_0^\ast$. We do this by induction on $i$.
The base and limit cases are immediate. For the successor one, suppose we have the result for $i$
and we want to prove it for $i+1$. Let $G_{\mathbb{P}^\ast_j(i)}$ be any generic containing
$p_i^\ast\restriction j(i)$ and extend it to a generic $G_{\mathbb{P}^\ast}$ containing $p_i^\ast$.
Hence, using the induction hypothesis $G_{\mathbb{P}^\ast}$ also contains $p_0^\ast$ and
therefore gives us a lifting $j^\ast$ of $j_0$.

Now, any $p \in G_{\mathbb{P}}$ can be extended (inside $G_{\mathbb{P}}$) so that the Mathias
condition it specifies at stage $i$ is of the form $(s,A) \in \mathbb{M}^\ka_{\mathcal{U}_i}$ where
$s \subseteq x_i$ and $A \in \mathcal{U}_i$. Then using ($\ast$) we infer $A= X^{G_{\mathbb{P}_i}}$
where $j_0(q) \Vdash \ka \in j_0(\dot{X})$ for some $q\in G_{\mathbb{P}_i}$.

But then, since $p_0^\ast \in G_{\mathbb{P}^\ast}$, $j_0(q)$ is an element of $G_{\mathbb{P}^\ast_j(i)}$
and therefore

\centerline{$\ka \in j_0(\dot{X})^{G_{\mathbb{P}^\ast_{j(i)}}}= j^\ast(A)$.}

It follows that the $j(\ka)$-Mathias condition specified by $p_{i+1}^\ast(j(i))^{G_{\mathbb{P}^\ast_{j(i)}}}$ with first component $x_i \cup \{ \ka\}$ does extend

\centerline{$(x_i, j^\ast(A))= (x_i, j_0(\dot{X})^{G_{\mathbb{P}^\ast_{j(i)}}})
\leq (s, j_0(\dot{X})^{G_{\mathbb{P}^\ast_{j(i)}}})$.}

This means that $p_i^\ast \restriction j(i) \Vdash p_{i+1}^\ast (j(i)) \leq (s, j_0(\dot{X}))= j_0(p)(j(i))$ and thus $p_{i+1}^\ast$ extends $j_0^\ast(p)$ for each $p \in G_{\mathbb{P}}$ and then also extends $p_0^\ast$.

To see the second claim, note that  if $\mathcal{U}_i \subseteq \mathcal{U}$, then $\ka \in j^\ast(A)$ for all $A \in \mathcal{U}_i$
which implies that ($\ast$) is satisfied at $i$. Then $\ka \in j^\ast(x_i)$ and so $x_i \in \mathcal{U}$.
\end{proof}

\begin{thm}\label{thm_uc}
Suppose $\kappa$ is a supercompact cardinal and $\kappa^\ast$ is a regular cardinal with
$\kappa <\kappa^{\ast} \leq  \Gamma$, $\Gamma^\kappa= \Gamma$.
There is a forcing notion $\mathbb{P}^\ast$ preserving cofinalities such that
$V^{\mathbb{P}^\ast} \models \mathfrak{u}(\kappa)= \kappa^\ast \wedge 2^\ka = \Gamma$.
\end{thm}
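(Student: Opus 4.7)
The plan is to take $\bbP^\ast := S_\ka \ast \dot{\bbP}$, the composition of the Laver preparation $S_\ka$ with the iteration of Section 3. Preservation of cofinalities follows from the $\ka$-cc of $S_\ka$, the $\ka$-directed closure of $\bbP$ established above, and the unnamed corollary showing that $\bbP$-names for functions $\Gamma\to\ORD$ are dominated by ground-model functions with values of size $\leq\ka$; Laver indestructibility then leaves $\ka$ supercompact in $V^{\bbP^\ast}$, so a normal ultrafilter on $\ka$ exists in the extension. The equality $2^\ka=\Gamma$ is obtained by counting simple $\bbP$-names for subsets of $\ka$: Corollary~\ref{smallinit} reduces each such name to a $\bbP_j\downarrow q$-name for some $j<\Gamma^+$, and Lemma~\ref{ccc} bounds the relevant dense subsets by $\Gamma$, while the many distinct Mathias generics added en route witness the reverse inequality.

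For $\fru(\ka)\leq\ka^\ast$, I would fix a normal ultrafilter $\mathcal{U}$ on $\ka$ in $V[G]$ and apply Lemma~\ref{ultrafilter} to obtain a set $S\subseteq\Gamma^+$ of order type $\ka^\ast$ such that $\mathcal{U}\cap V[G_\al]=\calU_\al$ for every $\al\in S$ and $\mathcal{U}\cap V[G_{\sup S}]\in V[G_{\sup S}]$. The lifting lemma immediately preceding Theorem~\ref{thm_uc} then yields $x_\al\in\mathcal{U}$ for each $\al\in S$. Since the $\calU_\al$'s are nested, the Mathias reals $x_\al$ are $\subseteq^*$-decreasing along $S$; I would argue that, thanks to $\mathcal{U}\cap V[G_{\sup S}]\in V[G_{\sup S}]$, the family $\{x_\al:\al\in S\}$ generates $\mathcal{U}$ modulo the bounded ideal. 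Passing from such a $\subseteq^*$-base to a strict base of the same cardinality is then cheap, since $2^{<\ka}=\ka$ gives $\ka^\ast\cdot\ka=\ka^\ast$.

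For the matching lower bound $\fru(\ka)\geq\ka^\ast$, suppose toward a contradiction that $\calV$ is a uniform ultrafilter on $\ka$ in $V[G]$ with base $\calB$ of size $\mu<\ka^\ast$. By Corollary~\ref{smallinit} together with the regularity of $\ka^\ast$, $\calB$ is already captured in some $V[G_j]$ with $j<\Gamma^+$. The density of subsequent Mathias generics---available for arbitrarily many normal ultrafilters extending $\calV\cap V[G_j]$---should then produce subsets of $\ka$ that $\calB$ cannot reap, contradicting its being a base in $V[G]$.

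The main obstacle I anticipate lies in the upper bound: verifying that $\{x_\al:\al\in S\}$ really generates an ultrafilter on all of $\Power(\ka)^{V[G]}$, and not merely on the subsets belonging to $V[G_{\sup S}]$. This should rest on the fixed-point closure argument in the proof of Lemma~\ref{ultrafilter}, combined with the $\ka^+$-cc of $\bbP$ on initial segments below a condition, which together force $\mathcal{U}$'s membership decisions to be effectively made below $\sup S$ and thus captured by the chosen Mathias reals.
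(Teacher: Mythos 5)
There is a genuine gap, and it is exactly the one you flag in your closing paragraph --- but it cannot be patched in the way you suggest. If you take $\mathbb{P}^\ast := S_\kappa \ast \dot{\mathbb{P}}$ with $\mathbb{P}=\mathbb{P}_{\Gamma^+}$ the \emph{full} iteration, then the family $\{x_\alpha : \alpha\in S\}$ decides only the subsets of $\kappa$ lying in $V[G_{\sup S}]$. The iteration continues past $\sup S$ for $\Gamma^+$ further stages, each of which (densely often) adds genuinely new subsets of $\kappa$; the $\mathcal{U}$-membership of such a set is new information created by the generic beyond $\sup S$, so no appeal to the fixed-point closure of Lemma~\ref{ultrafilter} or to the $\kappa^+$-cc below a condition can force those decisions to be ``made below $\sup S$.'' The paper's resolution is not to argue harder but to \emph{truncate}: it sets $\mathbb{P}^\ast:=\mathbb{P}_\alpha$ where $\alpha=\sup S$, an ordinal of cofinality $\kappa^\ast$, so that the final model is precisely $V[G_{\sup S}]$. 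There $\mathcal{U}^\ast=\mathcal{U}\cap V[G_{\sup S}]$ is an ultrafilter on all of $\mathcal{P}(\kappa)$ of the extension and is generated by the $\kappa^\ast$-many Mathias reals $x_i$, $i\in S$. The truncation is also what makes $2^\kappa=\Gamma$ true: in the full model new subsets of $\kappa$ appear cofinally in $\Gamma^+$, so $2^\kappa\geq\Gamma^+>\Gamma$ there, and your nice-name count (which is correct for the truncated poset) would give the wrong value.

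Two smaller remarks. For the lower bound the paper simply combines $\mathfrak{b}(\kappa)\leq\mathfrak{r}(\kappa)\leq\mathfrak{u}(\kappa)$ (Proposition~\ref{mm}) with $\mathfrak{b}(\kappa)\geq\kappa^\ast$, the latter using that every family of size $<\kappa^\ast$ is captured at a stage below $\alpha$ \emph{because} $\mathrm{cf}(\alpha)=\kappa^\ast$ --- again the truncation is doing the work. Your direct argument is in the right spirit, but the witness you need is a set that \emph{splits} every member of the putative small base, and a Mathias generic for an ultrafilter never splits a set from the intermediate model (it is almost contained in it or in its complement); you should instead invoke the $\kappa$-Cohen, i.e.\ splitting, reals added cofinally, as in Theorem~\ref{thm_uc_simple}.
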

\begin{proof}
We will not work with the whole generic extension given by $\mathbb{P}$. In fact we will chop the
iteration in the step $\alpha= \sup(S)$ (as in the Lemma~\ref{ultrafilter}) this is an
ordinal of cofinality $\kappa^\ast$. Define $\mathbb{P}^\ast= \mathbb{P}_{\alpha}$.

Take $G$ to be a $\mathbb{P}^\ast$-generic filter, the fact that $2^\ka= \Gamma$ is a consequence of
the fact that, the domains of the conditions obtained in Lemma~\ref{ultrafilter} can be chosen
in such a way that they all have size $\Gamma$.

To prove $\mathfrak{u}(\kappa)= \kappa^\ast$ we consider the ultrafilter $\mathcal{U}^\ast$ on $\kappa$ given by
the restriction of $\mathcal{U}$ (Lemma~\ref{ultrafilter}). Then by the same lemma note that for all
$i \in S$ the restriction of $\mathcal{U}$ to the model $V[G_i]$ belongs to $V[G_{i+1}]$ and moreover,
this is the ultrafilter $U_i^G$ chosen generically at stage $i$.

Furthermore by our choice of Master Conditions the $\ka$-Mathias generics $\dot{x}_i$ belong to
$\mathcal{U}$. Then $\mathcal{U}^\ast$ is generated by $\dot{x}_i$ for $i \in S$.

The other inequality $\mathfrak{u}(\kappa)\geq \kappa^\ast$ is a consequence of $\mathfrak{b}(\kappa)\geq \kappa^\ast$ and Proposition~\ref{mm}.
\end{proof}

\begin{prop}\label{mm}
$\mathfrak{b}(\kappa) \leq \mathfrak{r}(\kappa)$ and $\mathfrak{r}(\kappa) \leq \mathfrak{u}(\kappa)$.
\end{prop}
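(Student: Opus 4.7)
The plan is to prove both inequalities by straightforward generalizations of the classical countable arguments, with the regularity of $\ka$ (indeed its strong inaccessibility) used to handle limit stages of a transfinite recursion. The easy inequality $\frr(\ka) \leq \fru(\ka)$ is almost immediate: take a base $\mathcal{B}$ of minimum size for some uniform ultrafilter $\mathcal{U}$ on $\ka$ and observe that $\mathcal{B}$ is itself a reaping family, since for any $X \subseteq \ka$ one of $X$, $\ka \setminus X$ lies in $\mathcal{U}$ and so contains some $B \in \mathcal{B}$ as a subset, and such a $B$ is not split by $X$.

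For $\frb(\ka) \leq \frr(\ka)$ I would prove the contrapositive: every $\mathcal{R} \subseteq [\ka]^\ka$ with $|\mathcal{R}| < \frb(\ka)$ is simultaneously split by a single $X \subseteq \ka$. For each $R \in \mathcal{R}$ define $g_R \in \ka^\ka$ by $g_R(\alpha) = \min(R \setminus \alpha)$; by hypothesis there exists $g \in \ka^\ka$ with $g_R \leq^* g$ for all $R \in \mathcal{R}$. Recursively set $\alpha_0 = 0$, $\alpha_{\xi+1} = g(\alpha_\xi) + 1$, and $\alpha_\lambda = \sup_{\xi < \lambda} \alpha_\xi$ for $\lambda$ limit; by regularity of $\ka$ this stays below $\ka$ at every $\xi < \ka$ and is cofinal in $\ka$. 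Set $I_\xi := [\alpha_\xi, \alpha_{\xi+1})$ and put $X := \bigcup_{\xi \text{ even}} I_\xi$.

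To verify that $X$ splits every $R \in \mathcal{R}$, fix $R$ and pick $\beta_R < \ka$ with $g_R(\alpha) \leq g(\alpha)$ for all $\alpha \geq \beta_R$. Whenever $\alpha_\xi \geq \beta_R$ the point $g_R(\alpha_\xi) \in R$ lies in $[\alpha_\xi, g(\alpha_\xi) + 1) = I_\xi$, so $R \cap I_\xi \neq \emptyset$. The set of such $\xi$ is an end-segment of $\ka$, which contains both $\ka$-many even and $\ka$-many odd ordinals; hence both $R \cap X$ and $R \setminus X$ have cardinality $\ka$, so $X$ splits $R$ and $\mathcal{R}$ fails to be a reaping family. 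The only mild subtlety will be keeping the recursion bounded at limit stages and confirming cofinality of the even and odd index classes, both immediate from the regularity of $\ka$.
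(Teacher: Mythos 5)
Your argument is correct and is essentially the paper's: the paper proves $\frb(\ka)\leq\frr(\ka)$ by invoking maps $\Phi,\Psi$ with the property that $\Phi(A)\leq^{\ast}f$ implies $\Psi(f)$ splits $A$, and your interval-partition construction is exactly the explicit form of those maps, while the second inequality is the same observation that a base for a uniform ultrafilter is unsplit. The only cosmetic point is that you should replace the bound $g$ by an increasing function with $g(\alpha)\geq\alpha$ (harmless, since each $g_R(\alpha)\geq\alpha$) so that the sequence $\langle\alpha_\xi\rangle_{\xi<\ka}$ is guaranteed to be strictly increasing.
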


\begin{proof}
The first is the consequence of the following property that can be directly generalized from the countable case: there are functions $\Phi: [\ka]^\ka \to \ka^{\uparrow \ka}$ and
\mbox{$\Psi: \ka^{\uparrow \ka} \to [\ka]^\ka$}
such that whenever $\Phi(A) \leq^\ast f$ then $\Psi(f)$ splits $A$.

For the second one, it is just necessary to notice that if $\mathcal{B}$ is a base for a uniform ultrafilter on $\ka$, then $\mathcal{B}$ cannot be split by a single set $X$. Otherwise neither $X$ nor $\ka \setminus X$ will belong to the ultrafilter.
\end{proof}

\section{The generalized cardinal characteristics}

In the following subsections 4.1 - 4.6 we systemize those properties of the generalized cardinal characteristics which will be of importance for
our main consistency result.

\subsection{Unbounded and Dominating Families in $^\kappa\kappa$}

\begin{defn}
For two functions $f,g \in \ka^\ka$, we say $f \leq^\ast g$ if and only if there exists $\alpha< \kappa$ such that for all $\beta> \alpha$, $f(\beta) \leq g(\beta)$. A family $\mathfrak{F}$ functions from $\kappa$ to $\kappa$ is said to be
\begin{itemize}
 \item dominating, if for all $g \in \kappa^\kappa$, there exists an $f \in \mathfrak{F}$ such that $g \leq^{\ast} f$.
 \item unbounded, if for all $g \in \kappa^\kappa$, there exists an $f \in \mathfrak{F}$ such that $f \nleq^{\ast} g$.
\end{itemize}
\end{defn}

\begin{defn}
The unbounding and dominating numbers, $\mathfrak{b}(\kappa)$ and $\mathfrak{d}(\kappa)$ respectively are defined as follows:

\begin{itemize}
 \item $\mathfrak{b}(\kappa)= \min\{ \lvert \mathfrak{F} \lvert: \mathfrak{F}$ is an unbounded
 family of functions from $\kappa$ to $\kappa \}$.
 \item $\mathfrak{d}(\kappa)= \min\{ \lvert \mathfrak{F} \lvert: \mathfrak{F}$ is a dominating
 family of functions from $\kappa$ to $\kappa \}$.
\end{itemize}
\end{defn}

\begin{defn}[Generalized Laver forcing]
Let $\mathcal{U}$ be a $\kappa$-complete non-principal ultrafilter on $\kappa$.
\begin{itemize}
\item A $\mathcal{U}$-Laver tree is a $\kappa$-closed tree $T \subseteq \kappa^{<\kappa}$ of increasing sequences with the property that $\forall s \in T(\lvert s\lvert \geq \lvert\stem(T)\lvert\to \Succ_T(s) \in \mathcal{U})\}$.
\item The generalized Laver Forcing $\mathbb{L}^\ka_{\mathcal{U}}$ consists of all $\mathcal{U}$-Laver trees with order given by inclusion.
\end{itemize}
\end{defn}

\begin{prop}
Generalized Laver forcing $\mathbb{L}^\ka_{\mathcal{U}}$ generically adds a dominating function from $\kappa$ to $\kappa$.
\end{prop}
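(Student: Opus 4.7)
The plan is to exhibit a canonical $\mathbb{L}^\kappa_{\mathcal{U}}$-name $\dot\ell$ for the generic branch and argue by a single density argument that $\dot\ell$ is forced to eventually dominate every function in the ground model. The definition is standard: take $\dot\ell$ to be the name for the function on $\kappa$ whose value at $\alpha$ is the unique $\gamma$ such that some $T$ in the generic $\dot G$ has $\stem(T)(\alpha)=\gamma$. Well-definedness as a partial function is immediate from compatibility of elements of $\dot G$; totality requires the density, for every $\alpha<\kappa$, of the set of $T$ with $|\stem(T)|\geq\alpha$, which follows by iteratively picking any element of $\Succ_T(\stem(T))\in\mathcal{U}$ (non-emptyness coming from non-principality of $\mathcal{U}$) and restricting $T$ to that extended stem; limits of length $<\kappa$ are handled by intersecting, using that a $\kappa$-complete ultrafilter permits descending sequences of length $<\kappa$ to converge to a single $\mathcal{U}$-Laver tree.

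The first substantive observation I would record is that because $\mathcal{U}$ is $\kappa$-complete and non-principal on $\kappa$, it is uniform, and in particular every final segment $(\beta,\kappa)$ belongs to $\mathcal{U}$: if some $(\beta,\kappa)\notin\mathcal{U}$ then the bounded set $\kappa\setminus(\beta,\kappa)$ of cardinality $<\kappa$ would be in $\mathcal{U}$, which would then, by $\kappa$-completeness, contain a singleton, contradicting non-principality.

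The heart of the proof is the density lemma: for every $g\in{}^\kappa\kappa$ in the ground model and every $T\in\mathbb{L}^\kappa_{\mathcal{U}}$, there is $T'\leq T$ forcing $g\leq^{\ast}\dot\ell$. I would simply define
\[
T':=\bigl\{\,s\in T : s\subseteq\stem(T)\ \text{or}\ \bigl(s\supseteq\stem(T)\ \text{and}\ \forall\beta\in[|\stem(T)|,|s|)\ s(\beta)>g(\beta)\bigr)\,\bigr\}.
\]
Then $\stem(T')=\stem(T)$, and for each node $s\in T'$ with $|s|\geq|\stem(T)|$ we have
\[
\Succ_{T'}(s)=\Succ_T(s)\cap(g(|s|),\kappa),
\]
which lies in $\mathcal{U}$ as the intersection of two members of $\mathcal{U}$ (using $\kappa$-completeness and the uniformity observed above). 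Hence $T'\in\mathbb{L}^\kappa_{\mathcal{U}}$ and $T'\leq T$. Every branch through $T'$ extending $\stem(T)$ pointwise dominates $g$ above $|\stem(T)|$, so $T'\Vdash \dot\ell(\alpha)>g(\alpha)$ for all $\alpha\geq|\stem(T)|$.

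Combining, for any $\mathbb{L}^\kappa_{\mathcal{U}}$-generic $G$ and any $g\in V$, a generic encounters such a $T'$, so $g<^{\ast}\ell^G$ in $V[G]$, proving $\dot\ell$ names a dominating function over $V$. The only place that could go wrong is the preservation of the Laver-tree property under pruning, but this reduces exactly to the $\kappa$-completeness and uniformity of $\mathcal{U}$; I would therefore expect no serious obstacle, the argument being a direct transcription of the classical Laver density argument with $\mathcal{U}$-largeness replacing ``cofinite''.
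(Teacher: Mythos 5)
Your argument is correct and is essentially the paper's proof: your pruned tree $T'$ is exactly the paper's condition $T_f=\{s\in T:\forall\alpha\,((|\stem(T)|\leq\alpha<|s|)\to s(\alpha)>f(\alpha))\}$, and the density argument is the same. The extra details you supply (uniformity of $\mathcal{U}$, totality of the generic branch, closure under limits) are routine verifications the paper leaves implicit.
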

\begin{proof}
Let $G$ be a $\mathbb{L}^\kappa_{\mathcal{U}}$-generic filter. The Laver generic function in
$\kappa^\kappa$, $l_G$, is defined as follows: $l_G = \cap \{ [T] : T \in G \}$ where $[T]$ is
the set of branches in $T$.

To show that $l_G$ is a dominating function it is enough to notice that, for all $f \in \kappa^\kappa$ and all $T\in\mathbb{L}^\kappa_{\mathcal{U}}$,
the set $T_f = \{ s \in T: \forall \alpha
(( \lvert \stem(T)\lvert \leq \alpha < \lvert s\lvert) \to s(\alpha) > f(\alpha))\}$ is also a condition in $\mathbb{L}^\kappa_{\mathcal{U}}$ and $T_f \leq T$.
By genericity we conclude that $ V[G] \models \forall f \in V \cap \kappa^\kappa (f \leq^\ast l_G)$.
\end{proof}

\begin{lem}
If $\mathcal{U}$ is a normal ultrafilter on $\kappa$, then $\mathbb{M}^\kappa_{\mathcal{U}}$ and $\mathbb{L}^\kappa_{\mathcal{U}}$ are forcing equivalent.
\end{lem}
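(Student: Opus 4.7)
The plan is to exhibit a dense embedding $\Psi\colon \mathbb{M}^\ka_{\calU} \to \mathbb{L}^\ka_{\calU}$, which yields forcing equivalence. Working on the dense set of Mathias conditions $(s, A)$ with $\min A > \sup s$, I would set $\Psi(s, A) := T_{s, A}$, where $T_{s, A}$ consists of all increasing $t \in \ka^{<\ka}$ such that either $t \subseteq \bar{s}$ or $\bar{s} \subseteq t$ with $\ran(t) \setminus s \subseteq A$, where $\bar{s}$ denotes the strictly increasing enumeration of $s$. The uniformity of $\calU$ (which holds since $\calU$ is normal, hence $\ka$-complete and non-principal) guarantees that $T_{s, A}$ is a $\calU$-Laver tree with stem $\bar{s}$, so $\Psi$ is well-defined. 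Order preservation is straightforward: if $(t, B) \leq (s, A)$, then $\min A > \sup s$ together with $t \setminus s \subseteq A$ forces $\bar{s}$ to be an initial segment of $\bar{t}$, and $B \subseteq A$ shrinks the post-stem successor sets, giving $T_{t, B} \subseteq T_{s, A}$.

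The crux is to show that the image of $\Psi$ is dense. Fix a $\calU$-Laver tree $T$ with stem $\bar{s}$, and set $s := \ran(\bar{s})$. For each $\alpha < \ka$ let
\[
S_\alpha := \{t \in T : \bar{s} \subseteq t \text{ and } \sup \ran(t) \leq \alpha\}.
\]
Since $\ka$ is inaccessible, $\lvert S_\alpha \rvert < \ka$, so by $\ka$-completeness
\[
B_\alpha := \bigcap_{t \in S_\alpha} \Succ_T(t) \in \calU,
\]
and by normality, $A := \DiagInt_{\alpha < \ka} B_\alpha \in \calU$. Discarding the bounded initial segment below $\sup s + 1$, I may further assume $\min A > \sup s$. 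I claim $T_{s, A} \subseteq T$, so $T_{s, A} \leq T$ in $\mathbb{L}^\ka_\calU$. The proof is by induction on the length of nodes $t \in T_{s, A}$ with $\bar{s} \subseteq t$: the base $t = \bar{s}$ is immediate; at a successor $t = t' \cat \langle \gamma \rangle$ with $t' \in T$ and $\gamma \in A$, $\gamma > \sup \ran(t')$, choosing $\alpha := \sup \ran(t')$ gives $t' \in S_\alpha$ and $\alpha < \gamma$, so $\gamma \in B_\alpha \subseteq \Succ_T(t')$, hence $t \in T$; limit stages use the $\ka$-closedness of $T$.

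Finally I would check that $\Psi$ is an order embedding in the reverse direction too: if $T_{t, B} \subseteq T_{s, A}$, then $\bar{t} \in T_{s, A}$ and $\bar{s}$ must be an initial segment of $\bar{t}$ --- the only alternative, $\bar{t} \subsetneq \bar{s}$, would force the $\calU$-large successor set $B$ of $\bar{t}$ in $T_{t, B}$ to consist of the single value $\bar{s}(\lvert \bar{t} \rvert)$, contradicting that $\calU$ is non-principal --- so $s \subseteq t$ and $t \setminus s \subseteq A$, while comparison of post-stem successor sets yields $B \subseteq A$. Combined with the dense image, this makes $\Psi$ a dense embedding and establishes the forcing equivalence.

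I expect the main obstacle to be the density argument, and within it the precise combinatorial setup: indexing $S_\alpha$ by $\sup \ran(t) \leq \alpha$ rather than the apparently more natural $< \alpha$ is what lets the inductive successor step go through when $\gamma = \sup \ran(t') + 1$. Normality enters in an essential way, consolidating the $B_\alpha$ into a single set $A \in \calU$ whose tails uniformly refine the successor sets of $T$; $\ka$-completeness alone would not suffice to collapse the $\ka$-many constraints.
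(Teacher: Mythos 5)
Your proof is correct, and its skeleton --- the map $(s,A)\mapsto T_{s,A}$ with stem the increasing enumeration of $s$ and post-stem successors drawn from tails of $A$, reduced to showing the image is dense --- is the same as the paper's. Where you genuinely diverge is the density step. The paper invokes the Rowbottom/Ramsey partition property of normal ultrafilters: it colors pairs $\{\alpha,\beta\}$ according to whether $\beta$ is a $T$-successor of every node with range bounded by $\alpha$, takes a homogeneous $B\in\calU$, argues the color must be $1$, and sets $A=B\cap\Succ_T(\stem(T))$. You instead unwind that black box: the sets $B_\alpha=\bigcap_{t\in S_\alpha}\Succ_T(t)$ are exactly the paper's ``color $1$'' condition, you put them in $\calU$ by inaccessibility plus $\kappa$-completeness, and you consolidate them with a single diagonal intersection. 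This buys a cleaner and more self-contained argument (no appeal to an external partition theorem, and no need for the slightly delicate claim that the homogeneous color cannot be $0$), at the cost of nothing; your remark about indexing $S_\alpha$ by $\sup\ran(t)\leq\alpha$ rather than $<\alpha$ is exactly the right care for the successor step $\gamma=\sup\ran(t')+1$. Your additional verification of order-reflection is also a genuine improvement, since together with density it yields preservation of incompatibility, which the paper leaves implicit. One cosmetic point: from $T_{t,B}\subseteq T_{s,A}$ the successor-set comparison literally gives $B\setminus(\sup t+1)\subseteq A$ rather than $B\subseteq A$; this is harmless because you restricted at the outset to the dense set of conditions with $\min B>\sup t$, but it is worth saying explicitly that the comparison is made on that dense set.
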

\begin{proof}
The main point that we will use in this proof is that, when $\mathcal{U}$ is normal we have the following
``Ramsey''-like
property: For all $f: [\kappa]^{<\omega}
 \to \gamma$ where $\gamma < \kappa$, there is a set in $\mathcal{U}$ homogeneous for $f$.

Also it is worth to remember that in the countable case if $\mathcal{U}$ is a Ramsey Ultrafilter $\mathbb{M}(\mathcal{U}) \simeq \mathbb{L}(\mathcal{U})$.
Thus, we want to define a dense embedding $\phi: \mathbb{M}^\kappa_{\mathcal{U}} \to \mathbb{L}^\kappa_{\mathcal{U}}$. Take $(s,A)$ a condition in
$\mathbb{M}^\kappa_{\mathcal{U}}$ and define the tree $T= T_{(s,A)}$ as follows:

\begin{itemize}
  \item $\sigma= \stem(T)$ will be the increasing enumeration of $s$.
  \item If we already have constructed $\tau \in T_\alpha$, with $\tau \supseteq \sigma$, then
  $\tau^\smallfrown \langle \alpha \rangle \in T_{\alpha+1}$ if and only if
  $\alpha \in A$ and $\alpha \geq \sup\{ \tau(\beta): \beta < \alpha\}$.
  \item In the limit steps just ensure that $\tau \in T_\alpha$ if and only if $\tau \restriction \beta \in T_\beta$.
\end{itemize}

Note that $T$ is a condition in $\mathbb{L}^\kappa_{\mathcal{U}}$. For the limit steps note that if $\tau \in T_\alpha$ for $\alpha$ limit,
 then the set $\Succ_T(\tau) \supseteq \bigcap_{\beta< \alpha} \Succ_T(\tau \restriction \beta)$.

Now, consider the map $\phi: (s,A) \to T_{(s,A)}$. Since this map preserves $\leq$, it is enough to prove that the trees of the form
$T_{(s,A)}$ are dense in $\mathbb{L}^\kappa_{\mathcal{U}}$. For that, take an arbitrary $T \in \mathbb{L}^\kappa_{\mathcal{U}}$ and define:

\begin{equation*}
 f(\{\alpha,\beta\})=\begin{cases}
                      1 &

              \begin{aligned}[r]        \text{if }  \forall s \in T \text{ with } \alpha \geq \sup\{ s(\gamma): \gamma< \lvert
             s\lvert \} \\ ( \alpha \leq \beta \rightarrow \beta \in \Succ_T(s)) \\
             \end{aligned}
             \\ 0 &  \text{otherwise} \\
             \end{cases}
\end{equation*}

Using the Ramsey-like property we can find a set $B \in \mathcal{U}$ homogeneous for $f$. The color of $B$
cannot be $0$ because $T$ is a Laver tree.
Now, knowing that $f''[B]^2 =\{1\}$, we can define $s= \ran(\stem(T))$ and
$A= B \cap \Succ_T(\stem(T))$ and conclude that $T_{(s,A)} \leq T$ as we wanted.
\end{proof}

\begin{coroll}
If $\mathcal{U}$ is a normal ultrafilter on $\kappa$ then $\mathbb{M}^\kappa_{\mathcal{U}}$ always adds dominating functions.
\end{coroll}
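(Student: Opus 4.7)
The plan is very short, since this corollary is an immediate consequence of the two results immediately preceding it. First I would invoke the lemma just proved, which gives a dense embedding $\phi\colon \mathbb{M}^\kappa_{\mathcal{U}} \to \mathbb{L}^\kappa_{\mathcal{U}}$ whenever $\mathcal{U}$ is a normal ultrafilter on $\kappa$; in particular the two forcings are forcing equivalent and produce the same generic extensions of $V$.

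Next I would apply the preceding proposition, which shows that $\mathbb{L}^\kappa_{\mathcal{U}}$ adds a function $l_G \in \kappa^\kappa$ which dominates every ground model function modulo ${<}\kappa$. Since forcing with $\mathbb{M}^\kappa_{\mathcal{U}}$ yields the same extension (up to isomorphism of Boolean completions), the same dominating function exists in the $\mathbb{M}^\kappa_{\mathcal{U}}$-extension. Formally, if $G$ is $\mathbb{M}^\kappa_{\mathcal{U}}$-generic over $V$, then $\phi[G]$ generates a $\mathbb{L}^\kappa_{\mathcal{U}}$-generic filter $H$ over $V$, and the Laver real $l_H \in V[H] = V[G]$ dominates every $f \in V \cap \kappa^\kappa$.

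There is essentially no obstacle here; the only thing to be slightly careful about is that the previous lemma was stated as \emph{forcing equivalence} via a dense embedding from $\mathbb{M}^\kappa_{\mathcal{U}}$ into $\mathbb{L}^\kappa_{\mathcal{U}}$, so one should observe (as standard) that such a dense embedding induces a bijection between generic filters, whence $V[G] = V[H]$ and the dominating real transfers across. Thus the whole proof is one or two lines referring back to the lemma and the proposition.
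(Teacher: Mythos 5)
Your proposal is correct and is exactly the intended argument: the corollary is stated immediately after the forcing-equivalence lemma and the proposition that $\mathbb{L}^\kappa_{\mathcal{U}}$ adds a dominating function, and the paper gives no further proof precisely because it follows by combining those two results as you do. Your added remark that the dense embedding transfers generics (so $V[G]=V[H]$ and the Laver dominating function lives in the Mathias extension) is the right point to be careful about and is handled correctly.
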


\subsection{$\kappa$-maximal almost disjoint families}

\begin{defn}
Two sets $A$ and $B \in \mathcal{P}(\kappa)$ are called $\kappa$-almost disjoint if $A \cap B$ has size $<\kappa$. We say that a family of sets
$\mathcal{A} \subseteq \mathcal{P}(\kappa)$ is $\kappa$-almost disjoint if it has size at least $\kappa$ and all its elements are pairwise $\kappa$-almost disjoint. A family $\mathcal{A} \subseteq [\kappa]^\kappa$ is called a $\kappa$-maximal almost disjoint (abbreviated $\kappa$-mad) if it is  $\kappa$-almost disjoint and is not properly included in another such family.
\end{defn}

\begin{defn} $\mathfrak{a}(\kappa)= \min\{\lvert \mathfrak{A} \lvert: \mathfrak{A}$ is a $\kappa$-mad family$ \}$
\end{defn}

\begin{prop}
 $\mathfrak{b}(\kappa) \leq \mathfrak{a}(\kappa)$
\end{prop}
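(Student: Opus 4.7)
The plan is to adapt the classical proof that $\mathfrak{b} \leq \mathfrak{a}$ to the uncountable setting, using the regularity of $\kappa$ (which holds since $\kappa$ is supercompact in the ambient context).

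First I would argue by contradiction: suppose $\mathcal{A}$ is a $\kappa$-mad family with $\lvert\mathcal{A}\rvert=\lambda < \mathfrak{b}(\kappa)$. Since $\lvert\mathcal{A}\rvert \geq \kappa$, pick an enumeration $\{A_\alpha : \alpha < \kappa\}$ of a size-$\kappa$ subfamily, and enumerate the remainder as $\{B_\beta : \beta < \lambda\}$. Form the disjointifications $C_\alpha = A_\alpha \setminus \bigcup_{\gamma < \alpha} A_\gamma$. The key computation here uses regularity of $\kappa$: since $\lvert A_\alpha \cap A_\gamma\rvert < \kappa$ for each $\gamma < \alpha$ and $\alpha < \kappa$, the union $\bigcup_{\gamma<\alpha}(A_\alpha\cap A_\gamma)$ has size $<\kappa$, so $\lvert C_\alpha\rvert = \kappa$.

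Next, for each $\beta < \lambda$, I would define $f_\beta \in \kappa^\kappa$ by
\[
  f_\beta(\alpha) = \sup(B_\beta \cap C_\alpha) + 1.
\]
Because $\lvert B_\beta \cap C_\alpha\rvert \leq \lvert B_\beta \cap A_\alpha\rvert < \kappa$ and $\kappa$ is regular, $f_\beta(\alpha) < \kappa$. Since $\lambda < \mathfrak{b}(\kappa)$, the family $\{f_\beta : \beta < \lambda\}$ is bounded: there exists $g \in \kappa^\kappa$ with $f_\beta \leq^\ast g$ for every $\beta < \lambda$. Now choose, for each $\alpha < \kappa$, some $d_\alpha \in C_\alpha$ with $d_\alpha \geq g(\alpha)$, which is possible since $\lvert C_\alpha\rvert = \kappa$ and $g(\alpha) < \kappa$. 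Set $D = \{d_\alpha : \alpha < \kappa\}$.

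Finally I would verify that $D$ is $\kappa$-almost disjoint from every member of $\mathcal{A}$, contradicting maximality. For $A_\gamma$: since the $C_\alpha$ are pairwise disjoint and $C_\alpha \cap A_\gamma = \emptyset$ for $\alpha > \gamma$, we have
\[
  D \cap A_\gamma \subseteq \{d_\gamma\} \cup \bigcup_{\alpha < \gamma}(A_\gamma \cap A_\alpha),
\]
which has size $<\kappa$ by regularity. For $B_\beta$: whenever $\alpha$ is large enough that $f_\beta(\alpha) \leq g(\alpha)$, we have $d_\alpha \geq g(\alpha) \geq f_\beta(\alpha) > \sup(B_\beta \cap C_\alpha)$, so $d_\alpha \notin B_\beta$; hence $D \cap B_\beta$ is contained in a set of size $<\kappa$. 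Thus $D$ witnesses that $\mathcal{A}$ was not maximal, the desired contradiction.

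There is no serious obstacle; the only points requiring care are the repeated appeals to regularity of $\kappa$ (to keep sums of $<\kappa$-many $<\kappa$-sized sets below $\kappa$, and to ensure each $f_\beta$ really takes values in $\kappa$), and checking that the disjointifications $C_\alpha$ remain of size $\kappa$ so that $d_\alpha$ above $g(\alpha)$ can actually be chosen.
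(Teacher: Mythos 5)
Your proof is correct and follows essentially the same route as the paper: disjointify the first $\kappa$-many members of the family, encode the remaining members by the functions $\alpha\mapsto\sup(B_\beta\cap C_\alpha)+1$, use $\lambda<\mathfrak{b}(\kappa)$ to bound them by a single $g$, and pick one point of each $C_\alpha$ above $g(\alpha)$ to get a set almost disjoint from everything. The only (immaterial) difference is that you take suprema over $B_\beta\cap C_\alpha$ rather than $B_\beta\cap A_\alpha$, and your appeals to the regularity of $\kappa$ are exactly the ones the paper uses implicitly.
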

\begin{proof}
Suppose $ \mathfrak{a}(\kappa)= \lambda$, let  $\mathfrak{A}= \{ A_{\alpha}: \alpha
< \lambda \}$ be a
$\kappa$-almost disjoint family where $\lambda < \mathfrak{b}(\kappa)$. For each $\alpha < \kappa$,
let $\tilde{A}_{\alpha}= A_{\alpha} \setminus \bigcup_{\delta < \alpha }
(A_{\alpha} \cap A_{\delta})$. Since $\mathfrak{A}$ is $\kappa$-ad, we have $|\tilde{A}_{\alpha}|=
\kappa$, also $\tilde{A}_{\alpha} \cap \tilde{A}_{\beta}= \emptyset$ for all $\alpha, \beta < \kappa$.
Thus, $\tilde{A}_{\alpha}=^{\ast} A_{\alpha}$. (Here $\ast$ means modulo a set of size $<\kappa$).

Whenever $g \in \kappa^{\kappa}$, define $e_{g}^{\alpha}= \next(\tilde{A}_{\alpha}, g(\alpha))$,
the least ordinal in $\tilde{A}_{\alpha}$ greater than $g(\alpha)$. Let $E_{g}= \{e_{g}^{\alpha}:
\alpha < \kappa \}$. Then $E_{g}$ contains one element of each $\tilde{A}_{\alpha}$, so it is unbounded in $\kappa$.
Also $\lvert E_{g} \cap A_{\alpha}\lvert < \kappa$, for all $\alpha < \kappa$.

Now when $\kappa \leq \alpha < \lambda$. Each $A_{\alpha} \cap A_{\gamma}$, has size less than
$\kappa$, so we can fix $f_{\alpha}$ such that for all $\gamma < \kappa$ all elements of $A_{\alpha} \cap A_{\gamma}$ are less than $f_{\alpha}(\gamma)$. Where $f_{\alpha}(\gamma)= \sup (A_{\alpha} \cap A_{\gamma}) +1 $.

Now consider $\{f_{\alpha}: \alpha < \lambda\}$, which is a family of $\lambda < \mathfrak{b}(\kappa)$ functions, therefore there exists $g \in \kappa^{\kappa}$ with the property $f_{\alpha} <^{\ast} g$, for all $\alpha$.

As consequence we have that $E_{g} \cap A_{\alpha}$ has size less than $\kappa$, for all $\alpha$ because if $e_{g}^{\gamma} \in E_{g} \cap A_{\alpha}$ then $e_{g}^{\gamma} \in \tilde{A}_{\alpha}$ and $e_{g}^{\gamma}> g(\alpha)$, so $f_{\alpha}(\gamma)> e_{g}^{\gamma} > g(\gamma) $ which is only possible for a set of less than $\kappa$ values.

Therefore, $\mathfrak{A}$ is not maximal. Then $\mathfrak{b}(\kappa) \leq \lambda$.
\end{proof}

\begin{defn}\label{mad_poset}
Let $\mathcal{A}= \{A_i\}_ {i< \delta}$ be a $\kappa$-almost disjoint family. Let $\bar{\QQ}(\mathcal{A},\kappa)$ be the poset of all
pairs $(s,F)$ where $s \in 2^{<\ka}$ and $F \in [\mathcal{A}]^{< \ka}$,
with extension relation stating that $(t,H) \leq (s,F)$ if and only if
$t \supseteq s$, $H \supseteq F$ and for all $i \in \dom(t) \setminus \dom(s)$ with $t(i)=1$ we have $i \notin \bigcup\{A: A \in F\}$
\end{defn}

Note that the poset $\bar{\QQ}(\mathcal{A},\kappa)$ is $\ka$-centered and $\ka$-directed closed. If $G$ is $\bar{\QQ}(\mathcal{A},\kappa)$-generic then $\chi_G= \bigcup \{t: \exists F (t,F) \in G \}$ is the characteristic function of an unbounded subset $x_G$ of $\ka$ such that $\forall A \in \mathcal{A} (\lvert A \cap x_G \lvert) < \ka$.

\begin{prop}\label{mad}
If $Y \in [\ka]^\ka \setminus \mathcal{I}_\mathcal{A}$, where $\mathcal{I}_\mathcal{A}$ is the $\ka$-complete ideal generated by the $\ka$-ad-family $\mathcal{A}$, then $\Vdash_{\Theta(\mathcal{A},\kappa)}\lvert Y \cap \dot{x}_G \lvert = \ka$.
\end{prop}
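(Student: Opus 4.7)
The plan is a standard density argument. For each $\gamma<\kappa$ define
\[
D_\gamma=\{(t,H)\in\bar{\QQ}(\mathcal{A},\kappa):\exists\beta\in Y\cap(\gamma,\kappa)\text{ with }\beta\in\dom(t)\text{ and }t(\beta)=1\}.
\]
If each $D_\gamma$ is dense in $\bar{\QQ}(\mathcal{A},\kappa)$, then by genericity the generic filter $G$ meets every $D_\gamma$, so in $V[G]$ the set $Y\cap x_G$ has an element above every $\gamma<\kappa$. Since $\kappa$ is regular and $Y\cap x_G\subseteq\kappa$, being unbounded forces $|Y\cap x_G|=\kappa$, which is the desired conclusion.

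To verify density, fix $\gamma<\kappa$ and an arbitrary condition $(s,F)\in\bar{\QQ}(\mathcal{A},\kappa)$. The hypothesis $Y\notin\mathcal{I}_\mathcal{A}$ says that $Y$ is not almost covered by any $<\kappa$-sized union of elements of $\mathcal{A}$; equivalently, since $\mathcal{I}_\mathcal{A}$ is $\kappa$-complete and contains all bounded subsets of $\kappa$, we have $|Y\setminus\bigcup F'|=\kappa$ for every $F'\in[\mathcal{A}]^{<\kappa}$. Specializing to $F'=F$, the set $Y\setminus\bigcup F$ is unbounded in $\kappa$, so we may pick $\beta\in Y\setminus\bigcup F$ with $\beta>\max(\gamma,\sup\dom(s))$. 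Define $t\colon\beta+1\to 2$ by letting $t\restr\dom(s)=s$, $t(\beta)=1$, and $t(i)=0$ for every $i\in(\beta+1)\setminus(\dom(s)\cup\{\beta\})$. Then $(t,F)\leq(s,F)$ in $\bar{\QQ}(\mathcal{A},\kappa)$, because the only new coordinate where $t$ takes value $1$ is $\beta$, and $\beta\notin\bigcup F$ by construction; this is precisely what Definition~\ref{mad_poset} demands. Clearly $(t,F)\in D_\gamma$, witnessed by $\beta$.

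The only real obstacle is matching the constraint from Definition~\ref{mad_poset} — namely that every newly-defined $1$-value lie outside $\bigcup F$ — with the behaviour of the ideal $\mathcal{I}_\mathcal{A}$. This is resolved by the observation that $Y\notin\mathcal{I}_\mathcal{A}$ together with $\bigcup F\in\mathcal{I}_\mathcal{A}$ (from $\kappa$-completeness and $|F|<\kappa$) forces $Y\setminus\bigcup F$ not merely to be nonempty but to be of full cardinality $\kappa$ in $\kappa$, and in particular to contain elements above any preassigned bound. Once this is in hand, the density argument is essentially immediate, and no forcing machinery beyond the definition of the poset and genericity is needed.
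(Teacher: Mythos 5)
Your proof is correct and takes essentially the same approach as the paper's: a density argument in which $Y\notin\mathcal{I}_\mathcal{A}$ guarantees $|Y\setminus\bigcup F|=\kappa$, so one can extend any condition $(s,F)$ by a single new $1$-value at some $\beta\in Y\setminus\bigcup F$ above the prescribed bound. Your write-up is if anything slightly more careful about filling in the $0$-values on the intermediate coordinates and about why $\bigcup F\in\mathcal{I}_\mathcal{A}$ yields the unboundedness.
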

\begin{proof} Let $(s, F) \in \bar{\QQ}(\mathcal{A},\kappa)$ and $\alpha < \ka$ be arbitrary. It is sufficient to show that there are $(t,H) \leq (s,F)$ and $\beta > \alpha$ such that $(t,H) \Vdash \beta \in \check{Y} \cap \dot{x}_G$. Since $\ka \setminus \bigcup F$ is unbounded and $Y \notin \mathcal{I}_\mathcal{A}$, we have that $\lvert Y \setminus \bigcup F \lvert =\ka$. Take any $\beta> \alpha$ in $Y \setminus \bigcup F$ and  define $t'= t \cup \{ (\beta, 1)\} \cup \{ (\gamma, 0):\sup(\dom(t)) <\gamma< \beta\}$. Then $(t', H)$ is as desired.
\end{proof}

\subsection{The Generalized Splitting, Reaping and Independence Numbers}

\begin{defn} For $A$ and $B \in \wp(\kappa)$, say $A \subseteq^\ast B$ ($A$ is almost contained in $B$) if $A \setminus B$ has size $<\kappa$.
We also say that $A$ splits $B$ if both $A \cap B$ and $B \setminus A$ have size $\kappa$. A family $\mathcal{A}$ is called
a splitting family if every unbounded (with supremum $\kappa$) subset of $\kappa$ is split by a member of $\mathcal{A}$.
Finally $\mathcal{A}$ is unsplit if no single set splits all members of $\mathcal{A}$.
\begin{itemize}
\item $\mathfrak{s}(\kappa)= \min\{ \lvert \mathcal{A} \lvert: \mathcal{A}$ is a splitting  family of subsets of $\kappa \}$.
\item $\mathfrak{r}(\kappa)= \min\{ \lvert \mathcal{A} \lvert: \mathcal{A}$ is an unsplit family of subsets of $\kappa \}$.
\end{itemize}
\end{defn}

\begin{defn}
A family $\mathcal{I}=\{ I_\delta: \delta < \mu\}$ of subsets of $\ka$ is called $\ka$-independent
if for all disjoint $I_0, I_1 \subseteq \mathcal{I}$, both of size $< \kappa$,  $\bigcap_{\delta \in I_0}
I_\delta$ $\cap$ $\bigcap_{\delta \in I_0} (I_\delta)^c$ is
unbounded in $\ka$. The generalized independence number $\mathfrak{i}(\kappa)$ is defined as the minimal size of a
$\kappa$-independent family.
\end{defn}

\begin{prop}\label{indep}
If $\mathfrak{d}(\kappa)$ is such that for every $\gamma<\mathfrak{d}(\kappa)$ we have $\gamma^{<\kappa}<\mathfrak{d}(\kappa)$, then $\mathfrak{d}(\kappa) \leq \mathfrak{i}(\kappa)$
 \end{prop}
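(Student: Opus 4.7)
The plan is to argue the contrapositive: starting from a $\kappa$-independent family $\mathcal{I} = \{I_\xi : \xi < \lambda\}$ with $\lambda < \mathfrak{d}(\kappa)$, I will produce $X \in [\kappa]^\kappa$ for which $\mathcal{I} \cup \{X\}$ is still $\kappa$-independent, thereby establishing non-maximality. First I would enumerate the Boolean combinations
\[
B(F_0,F_1) \;=\; \bigcap_{\xi \in F_0} I_\xi \;\cap\; \bigcap_{\xi \in F_1} (\kappa \setminus I_\xi),
\]
ranging over disjoint $F_0,F_1 \in [\lambda]^{<\kappa}$; each is unbounded in $\kappa$ by $\kappa$-independence of $\mathcal{I}$. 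Under the hypothesis, the collection $\mathcal{B}$ of all such combinations has size at most $\mu := \lambda^{<\kappa} < \mathfrak{d}(\kappa)$.

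Next, to each $B \in \mathcal{B}$ I would attach its increasing enumeration function $e_B \in \kappa^\kappa$. The crux of the proof is the following reduction: if $\mathcal{I}$ is maximal, then $\{e_B : B \in \mathcal{B}\}$ is a dominating family in $(\kappa^\kappa,\leq^*)$; since $|\mathcal{B}| \leq \mu < \mathfrak{d}(\kappa)$, this would be a direct contradiction. To establish the reduction, given an arbitrary strictly increasing $g \in \kappa^\kappa$, I would consider the test set $X_g = \bigcup_{\alpha < \kappa} [g(2\alpha), g(2\alpha+1))$. Maximality of $\mathcal{I}$ forces $X_g$ to destroy $\kappa$-independence with some $B \in \mathcal{B}$; that is, either $B \cap X_g$ or $B \setminus X_g$ is bounded in $\kappa$. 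Either way, $B$ is almost contained in the union of alternate $g$-blocks, and a suitable re-indexing, together with the regularity of $\kappa$, would yield $g \leq^* e_B$.

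The main obstacle I foresee is making this last implication precise: a single $g$-block could in principle absorb many points of $B$, so the naive bound $e_B(\alpha) \geq g(\alpha)$ need not hold. I would address this by refining the test set $X_g$ — for instance, replacing each block by a carefully chosen thin ``sample'' of it, or iterating the argument across pairs of blocks — so that each element of $B$ contributes at most a bounded amount to the enumeration index, enforcing a genuine pointwise-eventual domination. Once the dominating-family claim is secured, the inequality $\mathfrak{d}(\kappa) \leq |\mathcal{B}| < \mathfrak{d}(\kappa)$ gives the desired contradiction, and the assumption $\gamma^{<\kappa} < \mathfrak{d}(\kappa)$ for all $\gamma < \mathfrak{d}(\kappa)$ is used precisely to control $|\mathcal{B}|$.
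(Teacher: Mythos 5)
The reduction at the heart of your argument --- that maximality of $\mathcal{I}$ would force the enumeration functions $e_B$ of the Boolean combinations to form a dominating family --- is exactly where the proof breaks, and the repair you sketch does not fix it. From the failure of independence of $\mathcal{I}\cup\{X_g\}$ you only learn that some $B$ is almost contained in $X_g$ or in $\kappa\setminus X_g$, i.e.\ that $B$ almost avoids every other $g$-block. That is a \emph{lower} bound on how far the next-element function of $B$ jumps at certain (cofinally many) places, which is precisely the kind of information that makes a family of functions \emph{unbounded}; it places no upper bound on $|B\cap g(\beta)|$, so $g\le^* e_B$ can fail badly: a single block $[g(2\alpha),g(2\alpha+1))$ may contain an initial segment of $B$ of any order type below its right endpoint, keeping $e_B$ small for a long stretch. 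Thinning $X_g$ to a sparse sample of each block does not help, because maximality is then free to hand you the other alternative, $B\cap X_g$ bounded, and the complement of a thin set carries no thinness information about $B$ at all. In effect the block-and-enumeration mechanism is the one behind $\mathfrak{b}(\kappa)\le\mathfrak{r}(\kappa)$ (Proposition~\ref{mm}); run against an independent family it can at best yield a $\mathfrak{b}(\kappa)$-type lower bound for $\mathfrak{i}(\kappa)$, not the $\mathfrak{d}(\kappa)$ bound claimed.

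The paper uses $\mathfrak{d}(\kappa)$ in a genuinely different way. One splits off a subfamily $\mathcal{D}=(D_\alpha)_{\alpha<\kappa}\subseteq\mathcal{I}$ and, for each branch $f\in 2^\kappa$, forms the decreasing tower $C^f_\alpha=\bigcap_{\beta<\alpha}D_\beta^{f(\beta)}$. A preliminary lemma --- and this is where $|\mathcal{A}|<\mathfrak{d}(\kappa)$ is actually spent: one needs a single $h$ with $h\nleq^* f_A$ for all $A$, i.e.\ the failure of the functions $f_A(\beta)=$ the $\beta$-th element of $A\cap C_\beta$ to dominate --- produces a pseudointersection $B_f=\bigcup_{\alpha<\kappa}(C^f_\alpha\cap h(\alpha))$ of the tower which still meets every Boolean combination of $\mathcal{I}\setminus\mathcal{D}$ unboundedly. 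Distinct branches give almost disjoint sets $B_f$, and gluing them along two disjoint dense subsets of $2^\kappa$ yields disjoint sets $Y$ and $Y'$, each meeting every Boolean combination of $\mathcal{I}$ unboundedly, whence $\mathcal{I}\cup\{Y\}$ is independent and $\mathcal{I}$ is not maximal. Your counting of the Boolean combinations via the hypothesis $\gamma<\mathfrak{d}(\kappa)\Rightarrow\gamma^{<\kappa}<\mathfrak{d}(\kappa)$ is the one ingredient your proposal shares with the paper; the dominating-family reduction needs to be replaced by an argument of the above shape.
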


The proof will be essentially a modification of the one for the countable case (Theorem 5.3 in \cite{AB:CCC}). To obtain the above proposition, we will need the following lemma.

\begin{lem}
  Suppose $\mathcal{C}=(C_\alpha: \alpha< \kappa)$ is a $\subseteq^\ast$-decreasing sequence of unbounded subsets of $\ka$
  and $\mathcal{A}$ is a family of less than $\mathfrak{d}(\ka)$ many subsets of $\ka$ such that each set in
  $\mathcal{A}$ intersects every $C_\alpha$ in a set of size $\ka$.
  Then $\mathcal{C}$ has a pseudointersection $B$ that also has unbounded intersection with each member
  of $\mathcal{A}$.
 \end{lem}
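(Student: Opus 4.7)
The plan is to adapt the classical $\om$-version of this pseudointersection-plus-unbounded-intersection lemma, substituting $\ka$ for $\om$ and using the regularity of $\ka$ (available since $\ka$ is supercompact) at every place where the countable argument implicitly exploits $\cf(\om)=\om$.

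First, I would replace the given $\subseteq^\ast$-decreasing sequence by a genuinely $\subseteq$-decreasing one, setting $C'_\alpha := C_\alpha \cap \bigcap_{\beta<\alpha} C_\beta$. Since each $C_\alpha \setminus C_\beta$ (for $\beta < \alpha$) has size $<\ka$ and $|\alpha|<\ka$, regularity of $\ka$ gives $|C_\alpha \setminus C'_\alpha|<\ka$, so the hypothesis $|A \cap C'_\alpha|=\ka$ is preserved for every $A \in \mathcal{A}$. By construction the new sequence satisfies $C'_\beta \subseteq C'_\alpha$ whenever $\alpha \le \beta$, and any pseudointersection of $(C'_\alpha)$ is automatically one of $(C_\alpha)$.

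Next, for each $A \in \mathcal{A}$ I define $f_A \colon \ka \to \ka$ by letting $f_A(\alpha)$ be the $\alpha$-th element in increasing order of $A \cap C'_\alpha \cap [\alpha,\ka)$, which is well defined since $|A \cap C'_\alpha|=\ka$. Because $|\{f_A : A \in \mathcal{A}\}| \le |\mathcal{A}| < \mathfrak{d}(\ka)$, this family fails to be dominating, so there exists $g \in \ka^\ka$ such that for every $A \in \mathcal{A}$ the set $\{\alpha<\ka : g(\alpha) > f_A(\alpha)\}$ is unbounded in $\ka$.

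Finally I would set $B := \bigcup_{\alpha<\ka} \bigl( C'_\alpha \cap [\alpha, g(\alpha)) \bigr)$ and verify the two required properties. For $B \subseteq^\ast C_\beta$: the contribution to $B$ from indices $\alpha<\beta$ lies inside the bounded set $\bigcup_{\alpha<\beta}[\alpha,g(\alpha))$, while for $\alpha \ge \beta$ we have $C'_\alpha \subseteq C'_\beta \subseteq C_\beta$ by construction. For unboundedness of $B \cap A$: at each $\alpha$ with $g(\alpha) > f_A(\alpha)$, the definition of $f_A$ supplies an element of $A \cap C'_\alpha \cap [\alpha,g(\alpha)) \subseteq B \cap A$ that is $\ge \alpha$, and the set of such $\alpha$ is unbounded, so $B \cap A$ is unbounded. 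The only real subtlety is step one: verifying that the passage from $\subseteq^\ast$-decreasing to $\subseteq$-decreasing does not destroy the hypothesis $|A \cap C_\alpha|=\ka$, which is precisely where regularity of $\ka$ is needed.
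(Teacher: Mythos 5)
Your proof is correct and follows essentially the same route as the paper's: pass to a genuinely $\subseteq$-decreasing sequence, encode the intersections via the functions $f_A$, use that fewer than $\mathfrak{d}(\kappa)$ functions cannot dominate to obtain $g$, and take $B=\bigcup_{\alpha<\kappa}\bigl(C'_\alpha\cap[\alpha,g(\alpha))\bigr)$ (the paper uses $C_\alpha\cap h(\alpha)$, a cosmetic difference). Your explicit verification of the ``without loss of generality $\subseteq$-decreasing'' step, using regularity of $\kappa$, fills in a detail the paper leaves implicit.
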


 \begin{proof}
 Without loss of generality assume that the sequence $\mathcal{C}$ is $\subseteq$-decreasing.
 For any $h \in \ka^\ka$ define $B_h = \bigcup_{\alpha< \ka}(C_\alpha \cap h(\alpha))$, clearly $B_h$
 is a pseudointersection of $\mathcal{C}$. Thus, we must find $h \in \ka^\ka$ such that $\lvert B_h
 \cap A\lvert = \ka$ for each $A \in \mathcal{A}$.

 For each $A \in \mathcal{A}$ define the function $f_A \in \ka^\ka$ as follows: $f_A(\beta) =$ the $\beta$-th
 element of $A \cap C_\beta$. The set $\{f_A: A \in \mathcal{A}\}$ has cardinality $< \mathfrak{d}(\ka)$, then we can find $h \in \ka^\ka$ such
 that for all $A \in \mathcal{A}$, $h \nleq^\ast f_A$ (i.e. $X_A =\{\delta< \kappa: f_A(\delta)<
 h(\delta)\}$ is unbounded).

 Then $B_h$ will be the pseudointersection we need. Note that $B_h \cap A= \bigcup_{\alpha< \ka}
 (C_\alpha \cap A) \cap h(\alpha) \supseteq \bigcup_{\alpha \in X_A}
 (C_\alpha \cap A) \cap f_A(\alpha)$ which is unbounded.
 \end{proof}

\begin{proof}[Proof of Proposition ~\ref{indep}]
Suppose that $\mathcal{I}$ is an independent family of cardinality $< \mathfrak{d}(\ka)$, we will show it is not maximal.
For this purpose choose $\mathcal{D}=(D_\alpha:\alpha< \ka)\subseteq \mathcal{I}$ and let $\mathcal{I}'=\mathcal{I} \setminus \mathcal{D}$.

For each $f: \ka \to 2$ consider the set $C_\alpha =\bigcap_{\beta< \alpha} D_\beta^{f(\beta)}$ where $D^0 = D$ and $D^1= D^c$, also define $\mathcal{A}= \{ \bigcap I_0 \setminus \bigcup I_1: I_0 $ and $I_1$ are disjoint subfamilies of $\mathcal{I}$ of size $ <\ka\}$. Since $|\mathcal{I}|^{< \kappa} < \mathfrak{d}(\kappa)$, the family $\mathcal{A}$ has size $< \mathfrak{d}(\kappa)$.

Then, using the lemma before there exists a pseudointersection $B_f$ of the family $(C_\alpha: \alpha <
\ka)$ that intersects in an unbounded set all members of $\mathcal{A}$. Then if $f \neq g$ we have
$\lvert B_f \cap B_g \lvert < \ka$ (Moreover, we can suppose they are disjoint).

Now, fix two disjoint dense subsets $X$ and $X'$ of $2^\ka$. Take $Y= \bigcup_{f \in X} B_f$ and $Y'= \bigcup_{f \in X'} B_f$, note that $Y \cap Y' = \emptyset$. Then it is enough to show that both $Y$ and $Y'$ have intersection of size $\ka$ with each member of $\mathcal{A}$. We write the argument for $Y$ (for $Y'$ i is analogous).

Take $J_0, J_1 \subseteq \mathcal{I}$ both of size $< \kappa$, call $J_0', J_1'$ their intersections with $\mathcal{I}'$. There exists $\alpha < \kappa$ such that if $D_\beta$ belongs to $J_0$ or $J_1$, then $\beta < \alpha$ and using the density of the sets $X$ fix $f \in X$ such that, if $D_\beta \in J_0 \cup J_1$, then $f(\beta) = 0$ or $1$ respectively. Hence:

\begin{multline}
\bigcap J_0 \setminus \bigcup J_1 = \bigcap J_0' \setminus \bigcup J_1' \cap \bigcap_{\{\beta:
D_\beta \in J_0 \cup J_1\}} D_\beta^{f(\beta)} \\
\supseteq \bigcap J_0' \setminus \bigcup J_1' \cap \bigcap_{\beta< \alpha}
D_\beta^{f(\beta)}\\
{^\ast}\supseteq \bigcap J_0' \setminus \bigcup J_1' \cap B_f \text{ which is unbounded.}
\end{multline}
\end{proof}

\begin{lem}\label{i}
Let $\mathcal{I}$ be an independent family of size $\ka$. Then there is a $\ka$-centered
forcing notion $\hat{\QQ}(\mathcal{I},\kappa)$ that adds a set $Y \in [\ka]^\ka$ such that:
\begin{enumerate}
  \item in $V^{\hat{\QQ}(\mathcal{I},\kappa)}$, $\mathcal{I} \cup \{Y\}$ is independent;
  \item $\forall Z \in V \cap [\ka]^\ka$ such that $Z \notin \mathcal{I}$,
	 $ V^{\hat{\QQ}(\mathcal{I},\kappa)} \models \mathcal{I} \cup \{Z,Y\}$ is not independent.
\end{enumerate}
 \end{lem}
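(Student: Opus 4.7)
My plan is to define $\hat{\QQ}(\mathcal{I},\kappa)$ by analogy with $\bar{\QQ}(\mathcal{A},\kappa)$ from Definition~\ref{mad_poset}: conditions are pairs $(s,\calF)$ with $s\in 2^{<\ka}$ (a partial characteristic function for $Y$) and $\calF\in[\mathcal{I}]^{<\ka}$, ordered by $(t,\calG)\leq(s,\calF)$ iff $t\supseteq s$, $\calG\supseteq\calF$, together with a Mathias-style placement condition on $\dom(t)\setminus\dom(s)$ tied to the Boolean combinations $B=\bigcap\calF_0\setminus\bigcup\calF_1$ for disjoint $\calF_0,\calF_1\subseteq\calF$. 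The generic set is $Y=\{\beta:\chi^G(\beta)=1\}$ where $\chi^G=\bigcup\{s^p:p\in G\}$. For $\ka$-centeredness I would partition conditions by their $s$-part, noting that any two such conditions share the common refinement $(s,\calF\cup\calG)$; under the cardinal arithmetic of the ambient construction (where $2^{<\ka}=\ka$) this yields $\ka$-many centered pieces.

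For property 1, fix disjoint $\calF_0,\calF_1\in[\mathcal{I}]^{<\ka}$ and let $B=\bigcap\calF_0\setminus\bigcup\calF_1$; by independence of $\mathcal{I}$ in $V$, $B$ is unbounded in $\ka$. The set of conditions $(s,\calF)$ with $\calF\supseteq\calF_0\cup\calF_1$ and with both $s^{-1}(0)\cap B$ and $s^{-1}(1)\cap B$ meeting any prescribed tail of $\ka$ is dense, by picking two large ordinals of $B$ and assigning them values $0$ and $1$. Genericity then gives $Y\cap B$ and $B\setminus Y$ both of size $\ka$, so $\mathcal{I}\cup\{Y\}$ is independent.

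For property 2, fix $Z\in V\cap[\ka]^\ka$ with $Z\notin\mathcal{I}$; the nontrivial case is when $\mathcal{I}\cup\{Z\}$ is already independent in $V$. I would show that the set of conditions $p=(s,\calF)$ which, for some Boolean combination $B$ of $\calF$, lock $Y$'s future values inside $B$ to coincide with $\chi_Z$ past a sufficiently large cutoff (so that $(B\setminus Z)\cap Y$ is eventually bounded) is dense below any condition. Given a condition $p$, one uses $Z\notin\mathcal{I}$ to produce $\calF_0,\calF_1\in[\mathcal{I}]^{<\ka}$ with $B=\bigcap\calF_0\setminus\bigcup\calF_1$ such that both $Z\cap B$ and $B\setminus Z$ have size $\ka$; then a Mathias-style extension copies $\chi_Z$ onto new entries of $s^p$ inside $B$. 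By genericity this produces a bounded Boolean combination involving $Y$, $Z$, and elements of $\mathcal{I}$, witnessing that $\mathcal{I}\cup\{Z,Y\}$ fails to be independent.

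The main obstacle will be calibrating the Mathias-style extension condition: it must be loose enough to keep conditions sharing an $s$-part centered and to permit the density argument for property 1, yet strong enough to allow conditions that commit $Y$ to agree with $\chi_Z$ on a chosen Boolean combination of $\calF$ (the key for property 2). A naive choice imposing no constraint on new entries would render $Y$ Cohen-like and hence independent from every $Z\in V$, defeating property 2; conversely, over-committing conditions would break the density needed for property 1. Striking this balance, and then verifying that the resulting poset is genuinely $\ka$-centered rather than merely $\ka$-cc, is where the careful bookkeeping lies.
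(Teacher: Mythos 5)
Your poset is modelled on the almost-disjointness coding forcing, but the heart of the lemma is property~2, and there your sketch stops exactly where the real difficulty begins. A condition $(s,\calF)$ with $\calF\in[\mathcal{I}]^{<\ka}$ can only make promises keyed to Boolean combinations of members of $\mathcal{I}$; it cannot reference an arbitrary $Z\in V\cap[\ka]^\ka$. So the proposed dense set of conditions that ``lock $Y$'s future values inside $B$ to coincide with $\chi_Z$'' is not definable from the data a condition carries, and if you enlarge the side conditions to range over arbitrary $Z$ you destroy the density argument for property~1 (and centeredness): two sets $Z_1,Z_2$ that are mutually independent over $\mathcal{I}$ cannot both have $Y$ eventually agree with their characteristic functions on a common combination. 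You correctly identify this tension (``Cohen-like'' versus ``over-committed'') but do not resolve it, and resolving it is the entire content of the lemma; as written the argument for~2 would fail for any $Z$ with $\mathcal{I}\cup\{Z\}$ independent in $V$, since your poset is then homogeneous with respect to $Z$ and the generic $Y$ splits every combination involving $Z$ as well.

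The paper takes a different route that dissolves the problem: all the diagonalization against future $Z$'s is done in the ground model, not by density. One builds (by transfinite recursion plus Zorn) a $\ka$-complete filter $\mathcal{G}$ extending a suitable tower over $\mathcal{B}_\mathcal{I}$, \emph{maximal} with respect to the property that every $X\in\mathcal{G}$ splits every $B\in\mathcal{B}_\mathcal{I}$ into two pieces of size $\ka$, and then sets $\hat{\QQ}(\mathcal{I},\ka):=\mathbb{M}^\ka_{\mathcal{G}}$, the generalized Mathias forcing of Definition~\ref{def_gmathias} (which is $\ka$-centered by grouping on the stem, as you propose for your poset). Property~1 follows because $x_G\subseteq^\ast X$ for all $X\in\mathcal{G}$ together with the splitting property of $\mathcal{G}$ gives $|x_G\cap B|=|x_G^c\cap B|=\ka$ for every $B\in\mathcal{B}_\mathcal{I}$. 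Property~2 is then a trichotomy requiring no forcing argument over $Z$ at all: either $Z\in\mathcal{G}$, whence $x_G\subseteq^\ast Z$ and $x_G\cap Z^c$ is bounded; or $Z\notin\mathcal{G}$ and maximality yields either some $X\in\mathcal{G}$ with $|X\cap Z|<\ka$ (so $|x_G\cap Z|<\ka$) or some $B\in\mathcal{B}_\mathcal{I}$ witnessing that $\mathcal{I}\cup\{Z\}$ was never independent. Note that the key mechanism is \emph{almost containment} of $Y$ in members of a filter, not agreement of $Y$ with $\chi_Z$; containment requirements for different $Z$'s cohere precisely because they are organized into a single filter. If you want to salvage your approach, you would have to smuggle this filter into the definition of your poset, at which point you have reconstructed $\mathbb{M}^\ka_{\mathcal{G}}$.
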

\begin{proof}
Let $\mathcal{B}_\mathcal{I}$ be the Boolean algebra generated by $\mathcal{I}$. Note that $\mathcal{B}_{\mathcal{I}}$ is $\kappa$-complete. Since $\mathcal{I}$ is not maximal, there is $X_0\subseteq\kappa$ such that for all $B\in\mathcal{B}_{\mathcal{I}}$ both $B\cap X$ and $B\cap X^c$ are of size $\kappa$. Thus in particular, $\mathcal{I}\cup\{X_0\}$ is independent. Recursively construct an increasing chain $\{I_\alpha\}_{\alpha<\delta}$ of independent families and a family $\mathcal{X}=\{X_\alpha\}_{\alpha<\delta}\subseteq[\kappa]^\kappa$ such that
\begin{enumerate}
\item $\mathcal{I}_{\alpha+1}=\mathcal{I}_\alpha\cup\{X_\alpha\}$; if $\alpha$ is a limit then
$\mathcal{I}_\alpha=\bigcup_{\beta<\alpha}\mathcal{I}_\alpha$;
\item $\forall\alpha<\delta\forall B\in\mathcal{B}_{\mathcal{I}_\alpha}$ we have $|X_\alpha\cap B|=|X_\alpha^c\cap B|=\kappa$.
\end{enumerate}
Then in particular $\mathcal{X}$ forms a $\kappa$-complete filter base. Extend $\mathcal{X}$ to a $\kappa$-filter $\mathcal{G}$, which is maximal with respect to the following property:
$$\forall X\in\mathcal{G}\forall B\in\mathcal{B}(\mathcal{I})(|B\cap X|=|B\cap X^c|=\kappa).$$
Thus in particular for all $X\notin\mathcal{G}$ either there is $Z\in\mathcal{G}$ such that $X\cap Z$ is of size $<\kappa$, or there is $B\in\mathcal{B}_{\mathcal{I}}$ such that either $|Y\cap B|<\kappa$, or $|Y^c\cap B|<\kappa$.

Let $\hat{\QQ}(\mathcal{I},\kappa):=\mathbb{M}^\kappa_{\mathcal{G}}$ and let $G$ be $\QQ$-generic. Then $Y:=x_G=\bigcup\{s:\exists F(s,F)\in G\}$ is as desired. Indeed.
To see $(a)$ note that it is enough to show that for all $B \in \mathcal{B}(\mathcal{I})$ both $x_G \cap B$ and $x_G^c \cap B$ are forced to be unbounded in $\ka$. That $\Vdash |\dot{x}_G^c\cap \check{B}|=\kappa$ follows from the fact that given $B \in \mathcal{B}(\mathcal{I})$, $\Vdash \check{X} \cap \check{B}\subseteq^\ast \dot{x}_G^c \cap \check{B}$ for arbitrarily $X \in \mathcal{G}$. To see that $\Vdash |x_G\cap B|=\kappa$, proceed by contradiction.
That is suppose there is $B\in\mathcal{B}_\mathcal{I}$, $(s,A)\in\hat{\QQ}(\mathcal{I},\kappa)$ and $\alpha<\kappa$ such that $(s,A)\Vdash\dot{x}_G\cap\check{B}\subseteq\check{\alpha}$.
Since $B\cap A$ is unbounded in $\kappa$, we can choose $\beta\in B\cap A$ such that $\beta>\max\{\sup(s),\alpha\}$. Then  $(s \cup \{\beta\}, A \setminus (\beta +1)) \leq (s,A)$ and $(s \cup \{\beta\}, A \setminus (\beta +1)) \Vdash \beta\in \dot{x}_G \cap \check{B}$ which is a contradiction.

To see part $(b)$, take $Z \in (V \cap [\ka]^\ka) \setminus \mathcal{I}$.  If $Z \in \mathcal{G}$, then $\Vdash \dot{x}_G \subseteq^\ast Z$ and so $\Vdash |\dot{x}_G^c \cap Z|<\kappa$. If $Z \notin \mathcal{G}$, then either there is $X\in\mathcal{G}$ such that $|X\cap Z|<\kappa$ and so $\Vdash |\dot{x}_G\cap\check{Z}|<\kappa$, or there is $B\in\mathcal{B}_\mathcal{I}$ such that $|X\cap B|<\kappa$ or $|X^c\cap B|<\kappa$. Therefore
$\Vdash(\mathcal{I}\cup\{Z,\dot{x}_G\}\;\hbox{is not independent})$.
\end{proof}

\subsection{The generalized pseudointersection and tower numbers}

\begin{defn}
Let $\mathcal{F}$ be a family of subsets of $\kappa$, we say that $\mathcal{F}$ has the strong intersection property
(SIP) if any subfamily $\mathcal{F}' \subseteq \mathcal{F}$ of size $< \ka$ has intersection of size $\ka$, we also say that
$A \subseteq \ka $ is a pseudointersection of $\mathcal{F}$ is $A \subseteq^\ast F$, for all $F \in \mathcal{F}$. A tower $\mathcal{T}$ is a well-ordered family of subsets of $\kappa$ that has no pseudointersection of size $\ka$.
\begin{itemize}
\item  The generalized pseudointersection number $\mathfrak{p}(\kappa)$ is defined as  the minimal size of a family $\mathcal{F}$ which has the SIP but no pseudointersection of size $\kappa$.
\item The generalized tower number $\mathfrak{t}(\kappa)$ is defined as the minimal size of a tower $\mathcal{T}$  of subsets of $\kappa$.
\end{itemize}
\end{defn}

\begin{lem}\label{Lema36}
 $\kappa^+ \leq \mathfrak{p}(\kappa) \leq \mathfrak{t}(\kappa) \leq \mathfrak{b}(\ka)$
\end{lem}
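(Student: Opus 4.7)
My plan is to prove the three inequalities $\kappa^+\leq\mathfrak{p}(\kappa)$, $\mathfrak{p}(\kappa)\leq\mathfrak{t}(\kappa)$, and $\mathfrak{t}(\kappa)\leq\mathfrak{b}(\kappa)$ separately, in order of increasing difficulty.

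For $\kappa^+\leq\mathfrak{p}(\kappa)$, I would do a direct transfinite recursion: given $\mathcal{F}=\{F_\alpha:\alpha<\kappa\}$ with SIP, pick $a_\alpha\in\bigcap_{\beta\leq\alpha}F_\beta$ strictly above $\sup\{a_\gamma:\gamma<\alpha\}$ at each stage $\alpha<\kappa$. SIP (applied to the subfamily of size $|\alpha+1|<\kappa$) guarantees the intersection is of size $\kappa$, hence unbounded in $\kappa$ by the regularity of $\kappa$; the supremum is below $\kappa$ because $|\alpha|<\kappa$. The resulting $A=\{a_\alpha:\alpha<\kappa\}$ has size $\kappa$ and satisfies $A\setminus F_\beta\subseteq\{a_\gamma:\gamma<\beta\}$, a set of size $<\kappa$.

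For $\mathfrak{p}(\kappa)\leq\mathfrak{t}(\kappa)$, the observation is that any tower $\mathcal{T}=(T_\alpha:\alpha<\lambda)$ is automatically a family with SIP, so directly witnesses $\mathfrak{p}(\kappa)$. Passing to a cofinal subsequence I may assume $\lambda$ is regular, and combining with the first inequality I may further assume $\lambda>\kappa$ (otherwise the preceding recursion applied to the $\subseteq^*$-decreasing sequence produces a pseudointersection of size $\kappa$). Then for $I\subseteq\lambda$ with $|I|<\kappa<\lambda$, regularity of $\lambda$ places $\sup I$ strictly below $\lambda$, so $T_{\sup I}\in\mathcal{T}$ is almost contained in each $T_\alpha$ for $\alpha\in I$; since a $<\kappa$-union of sets each of size $<\kappa$ has size $<\kappa$, $T_{\sup I}\subseteq^*\bigcap_{\alpha\in I}T_\alpha$ and this intersection has size $\kappa$. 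So $\mathcal{T}$ satisfies SIP, giving $\mathfrak{p}(\kappa)\leq\lambda=\mathfrak{t}(\kappa)$.

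The interesting content lies in $\mathfrak{t}(\kappa)\leq\mathfrak{b}(\kappa)$. The strategy is to convert an unbounded family of functions into a tower via closure points. A standard diagonalization (using that every family of size $<\mathfrak{b}(\kappa)$ is $\leq^*$-bounded) produces a $\leq^*$-increasing unbounded sequence $(f_\alpha:\alpha<\mathfrak{b}(\kappa))$ of strictly increasing $f_\alpha:\kappa\to\kappa$. Define
\[ T_\alpha=\{\xi<\kappa : f_\alpha[\xi]\subseteq\xi\}, \]
the set of ordinals closed under $f_\alpha$. Each $T_\alpha$ is a club of $\kappa$ and hence of size $\kappa$. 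For $\alpha<\beta$, a witness $\xi_0$ to $f_\alpha\leq^*f_\beta$ shows that every $\xi>\max(\xi_0,\sup f_\alpha[\xi_0])$ with $\xi\in T_\beta$ also lies in $T_\alpha$, so $T_\beta\setminus T_\alpha$ is bounded by a fixed ordinal below $\kappa$ and $(T_\alpha)$ is $\subseteq^*$-decreasing. To see no pseudointersection of size $\kappa$ exists, suppose $B\in[\kappa]^\kappa$ were one, and let $h:\kappa\to\kappa$ enumerate $B$ with a shift, $h(\xi)=(\xi+1)$th element of $B$, so $h$ is strictly increasing with $h(\xi)>\xi$. For each $\alpha$ the size-$<\kappa$ set $B\setminus T_\alpha$ corresponds under $h$ to a size-$<\kappa$ set of bad $\xi$; for every other $\xi$, $h(\xi)\in T_\alpha$ together with $\xi<h(\xi)$ yields $f_\alpha(\xi)<h(\xi)$. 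Hence $f_\alpha\leq^*h$ for every $\alpha$, contradicting unboundedness, so $(T_\alpha:\alpha<\mathfrak{b}(\kappa))$ is indeed a tower.

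The main obstacle is this last inequality: the closure-point construction must simultaneously convert $\leq^*$-domination into $\supseteq^*$-containment and ensure the resulting family admits no large pseudointersection. Verifying the $\subseteq^*$-decreasing property requires careful bookkeeping of the two sources of $<\kappa$ failure, namely the tail $\xi_0$ from $f_\alpha\leq^*f_\beta$ and the ordinal $\sup f_\alpha[\xi_0]$ (both $<\kappa$ by regularity). The nonexistence of a large pseudointersection is the essential use of unboundedness: the enumeration of any hypothetical such $B$ would dominate the whole family. The shift by one in defining $h$ sidesteps the minor nuisance of fixed points of the enumeration.
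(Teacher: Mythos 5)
Your first and third inequalities are fine. The argument for $\kappa^+\leq\mathfrak{p}(\kappa)$ is essentially the paper's, presented more cleanly: you make explicit the ``pick a point of $\bigcap_{\beta\leq\alpha}F_\beta$ above the sup of what you already have'' bookkeeping that the paper's choice of $a_\beta\in B'_{\alpha}\setminus B'_{\alpha+1}$ glosses over. For $\mathfrak{t}(\kappa)\leq\mathfrak{b}(\kappa)$ the paper gives no proof at all, citing Claim 1.8 of Garti's \emph{Pity on $\lambda$}; your closure-point construction is a correct, self-contained proof of that claim. The $\subseteq^*$-decrease of the $T_\alpha$, their being clubs, and the domination $f_\alpha\leq^*h$ by the shifted enumeration of a putative pseudointersection all check out, using regularity of $\kappa$ to convert ``all but $<\kappa$ many $\xi$'' into ``all sufficiently large $\xi$''.

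The gap is in the middle inequality. The paper declares $\mathfrak{p}(\kappa)\leq\mathfrak{t}(\kappa)$ ``immediate from the definition'', which only makes sense if a tower is \emph{by definition} a $\supseteq^*$-well-ordered family with the SIP. You instead try to derive the SIP from the tower property, and your disposal of the case $\lambda\leq\kappa$ is circular: ``the preceding recursion produces a pseudointersection of size $\kappa$'' presupposes that intersections of $<\kappa$ many of the $T_\beta$ are unbounded, which for $\lambda<\kappa$ is exactly what is in question. (For $\lambda=\kappa$ your reduction does work, since every proper initial segment of the tower then has size $<\kappa$ and the union of $<\kappa$ many $<\kappa$-sized error sets is small.) Moreover the case $\lambda<\kappa$ cannot be repaired under the paper's literal definition of tower: partition $\kappa$ into $\omega$ unbounded pieces $S_n$ and set $T_n=\bigcup_{m\geq n}S_m$. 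This is a $\subseteq$-decreasing $\omega$-sequence of unbounded subsets of $\kappa$ with no pseudointersection of size $\kappa$ (any $A$ with all $A\setminus T_n$ of size $<\kappa$ is a countable union of sets of size $<\kappa$, hence of size $<\kappa$ by regularity), so read literally one would get $\mathfrak{t}(\kappa)=\omega$ and the lemma would be false. Either the SIP must be built into the definition of tower --- making the middle inequality genuinely immediate, as the paper intends --- or towers must be required to have length at least $\kappa$, in which case your argument does close the gap. As written, your case analysis silently assumes away the one problematic case. (Note also that your tower of clubs in the third part does satisfy the SIP, since $<\kappa$ many clubs intersect in a club, so it is a legitimate witness under either reading.)
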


\begin{proof}
 First we prove $\ka^+ \leq \mathfrak{p}(\ka)$: Take a family of subsets of $\ka$, $\mathcal{B}= (B_\alpha : \alpha< \ka)$ with the SIP. Then we can
 construct a new family $\mathcal{B}'=(B'_\alpha : \alpha< \ka)$ such that $B'_{\alpha+1} \subseteq B'_\alpha$ and $B'_\alpha \subseteq B_\alpha$ for all $\alpha< \kappa$. Simply define $B'_0= B_0$, $B'_{\alpha+1}= B_{\alpha+1} \cap B'_\alpha$ and for limit $\gamma$, $B'_\gamma= \bigcap_{\alpha< \gamma} B'_\alpha$. Note that this construction is possible thanks to the SIP.

 Then, without loss of generality we can find $\kappa$-many indexes $\beta$
 where it is possible to choose $a_\beta \in B'_{\alpha} \setminus B'_{\alpha+1}$. Hence the set $X = \{ a_\beta : \beta < \ka\}$ is a pseudointersection of the family $\mathcal{B}'$ and so of $\mathcal{B}$.

 $\mathfrak{p}(\ka) \leq \mathfrak{t}(\ka)$ is immediate from the definition and, $\mathfrak{t}(\ka) \leq \mathfrak{b}(\ka)$ was proven in Claim 1.8, \cite{SG:pity}.
\end{proof}

\subsection{The generalized distributivity number}

\begin{defn} The Generalized Distributivity Number $\mathfrak{h}(\kappa)$ is defined as the minimal $\lambda$ for which
$\mathcal{P}(\kappa) \big/ <\kappa$ is not $\lambda$-distributive. A poset $\mathbb{P}$ is $\lambda$-distributive if given a collection $\calD$ of $\lambda$-many dense open sets with the property that any subfamily of size $<\lambda$ has open dense intersection, has open dense intersection.
\end{defn}

\begin{prop}\label{prop31}
$\mathfrak{t}(\kappa) \leq \mathfrak{h}(\kappa) \leq \mathfrak{s}(\kappa)$
\end{prop}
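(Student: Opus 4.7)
The plan is to prove the two inequalities separately, in both cases transferring the standard classical arguments to the $\ka$-complete Boolean algebra $\calB := \mathcal{P}(\ka)/[\ka]^{<\ka}$, whose nonzero elements correspond to sets in $[\ka]^\ka$ modulo sets of size $<\ka$.

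For $\frt(\ka) \leq \frh(\ka)$, I would argue the contrapositive: if $\la < \frt(\ka)$, then $\calB$ is $\la$-distributive in the sense of the paper. Suppose $\{\calD_\al : \al < \la\}$ is a family of dense open subsets of $\calB$ all of whose $<\la$-sized subfamilies have dense open intersection, and let $[X]$ be a nonzero element of $\calB$. Recursively construct a $\subseteq^\ast$-decreasing sequence $(X_\al)_{\al<\la}$ in $[\ka]^\ka$ with $X_0 \subseteq^\ast X$ and $[X_\al] \in \calD_\al$: at a successor stage use density of $\calD_\al$ below the previous condition; at a limit stage $\ga < \la$, the initial segment $(X_\be)_{\be<\ga}$ is a $\subseteq^\ast$-descending sequence of length $\ga < \frt(\ka)$, hence by minimality of $\frt(\ka)$ it is not a tower and admits a pseudointersection $Y \in [\ka]^\ka$, below which density of $\calD_\ga$ yields $X_\ga$. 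The completed sequence has length $\la < \frt(\ka)$ and so itself admits a pseudointersection $X_\la \in [\ka]^\ka$; openness of each $\calD_\al$ then forces $[X_\la] \in \bigcap_{\al<\la}\calD_\al$, and since $[X_\la] \leq [X]$, this establishes density of the total intersection.

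For $\frh(\ka) \leq \frs(\ka)$, let $\calS$ be a splitting family of minimum cardinality $\frs(\ka)$, and for each $S \in \calS$ put
\[
\calD_S := \{[A] \in \calB : A \in [\ka]^\ka \text{ and either } A \subseteq^\ast S \text{ or } A \cap S =^\ast \emptyset\}.
\]
Each $\calD_S$ is dense open: for any nonzero $[X]$, either $|X \cap S| < \ka$ (so $[X] \in \calD_S$) or $|X \cap S| = \ka$ (so $[X \cap S] \leq [X]$ lies in $\calD_S$). The total intersection $\bigcap_{S \in \calS} \calD_S$ is empty, since any nonzero $[A]$ inside it would provide a set of size $\ka$ unsplit by every $S \in \calS$, contradicting that $\calS$ is splitting. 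For the subfamily-density clause, given $\calS' \subseteq \calS$ with $|\calS'| < \frs(\ka)$ and a nonzero $[X]$, the family $\{S \cap X : S \in \calS'\}$ cannot be a splitting family of $X$, for otherwise via any bijection $X \cong \ka$ it would transfer to a splitting family of $\ka$ of size below $\frs(\ka)$, contradicting minimality; so there is $Y \in [X]^\ka$ unsplit by $\calS'$, giving $[Y] \leq [X]$ in $\bigcap_{S \in \calS'} \calD_S$. Thus $\{\calD_S : S \in \calS\}$ witnesses the failure of $\frs(\ka)$-distributivity, establishing $\frh(\ka) \leq \frs(\ka)$.

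The points requiring the most care are the limit and terminal stages in the first argument, where the pseudointersections must genuinely have size $\ka$ (which is exactly how the paper's definitions of tower and $\frt(\ka)$ are set up), and the bijective transfer step in the second argument, where one verifies that a would-be splitting family of $X$ of the form $\{S \cap X : S \in \calS'\}$ pulls back to a splitting family of $\ka$ of size at most $|\calS'| < \frs(\ka)$.
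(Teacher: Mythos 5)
Your proof is correct and follows essentially the same route as the paper: a $\subseteq^\ast$-decreasing recursion through the dense open sets using pseudointersections at limits (possible since the length is below $\frt(\kappa)$) for the first inequality, and the dense open sets of conditions unsplit by a fixed $S\in\mathcal{S}$ with empty total intersection for the second. The only difference is that you additionally verify the subfamily-density clause required by the paper's definition of $\lambda$-distributivity (via the bijective transfer of a would-be splitting family of $X$ back to $\kappa$), a step the paper's own argument for $\frh(\kappa)\leq\frs(\kappa)$ leaves implicit.
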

\begin{proof} $\hbox{}$

\noindent
$\bullet$
$\mathfrak{t}(\kappa) \leq \mathfrak{h}(\kappa)$: Let $\delta < \frt(\ka)$ and $D_\alpha$ for
$\alpha < \delta$ a collection of open dense sets in $\mathcal{P}(\kappa) \big/ <\kappa$ such that every subfamily of size $<\delta$ has open dense intersection. Fix $A \in [\ka]^\ka$ and recursively define $A_\alpha$, $\alpha \leq \delta$
with $A_0= A$ and $A_\beta \subseteq^\ast A_\alpha$ for all $\beta > \alpha$ and $A_{\alpha+1} \in D_\alpha$. (In the
limit steps this is possible because $\delta < \frt(\ka)$). Finally $A_\delta \in \bigcap_{\alpha < \delta} D_\alpha$.

\bigskip
\noindent
$\bullet$
$\mathfrak{h}(\kappa) \leq \mathfrak{s}(\kappa)$: Let $\mathcal{S}$ be an splitting family of
subsets of $\ka$. For each $S \in \mathcal{S}$, the set $D_S = \{ X \in [\ka]^\ka: X$ is not split by $S\}$ is dense open. Because $\mathcal{S}$ is a splitting family we obtain $\bigcap_{S \in \mathcal{S}} D_S = \emptyset$.
\end{proof}

\subsection{Cardinals from Cich\'{o}n's diagram at $\ka$}

When $\ka$ is uncountable and satisfies $\ka^{<\ka}=\ka$, it is possible to endow $2^\kappa$ with the topology generated by the sets of the form $[s]= \{ f \in 2^\ka: f \supseteq s\}$, for $s \in 2^{<\ka}$. Then it is possible to define nowhere dense sets and meager sets as $\kappa$-unions of nowhere dense sets. Hence, we can
consider the Meager Ideal $\mathcal{M}_\ka$ and study the cardinal invariants associated to this ideal.
Specifically we are interested in the cardinals in Cich\'{o}n's Diagram.

\begin{itemize}
 \item $\add(\mathcal{M}_\kappa) = \min\{ \lvert \mathcal{J} \lvert: \mathcal{J} \subseteq \mathcal{M}_\ka$ and $ \cup \mathcal{J} \notin \mathcal{M}_\ka \}$
 \item $\cov(\mathcal{M}_\kappa)= \min \{\lvert \mathcal{J} \lvert: \mathcal{J} \subseteq \mathcal{M}_\ka$ and $ \cup \mathcal{J} = 2^\kappa \} $
 \item $\cof(\mathcal{M}_\kappa)= \min \{\lvert \mathcal{J} \lvert: \mathcal{J} \subseteq \mathcal{M}_\ka\;\hbox{and}\;\forall M \in \mathcal{M}_\ka \exists J \in \mathcal{J}\;\hbox{s.t.}\;M \subseteq J\}$
 \item $\non(\mathcal{M}_\kappa)= \min \{\lvert X\lvert: X \subset 2^\kappa$ and $ X \notin \mathcal{M}_\ka \}$
\end{itemize}

If in addition $\ka$ is strongly inaccessible we have a similar diagram as in the countable case (For specific details about these properties see \cite{BBFM:Icho}):

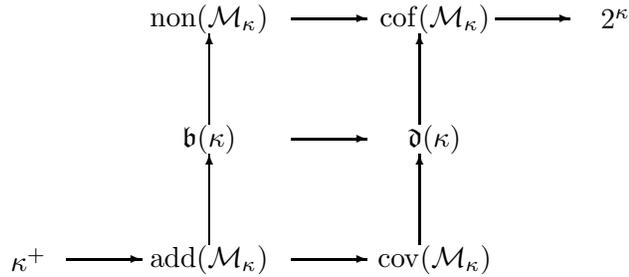
\begin{figure}[ht]
\begin{center}
\setlength{\unitlength}{0.2000mm}
\begin{picture}(530.0000,180.0000)(0,10)
\thinlines
\put(430,180){\vector(1,0){50}}
\put(295,180){\vector(1,0){50}}
\put(295,100){\vector(1,0){50}}
\put(295,20){\vector(1,0){50}}
\put(145,20){\vector(1,0){50}}
\put(380,30){\vector(0,1){60}}
\put(380,110){\vector(0,1){60}}
\put(240,110){\vector(0,1){60}}
\put(240,30){\vector(0,1){60}}
\put(470,170){\makebox(80,20){$2^\kappa$}}
\put(350,170){\makebox(80,20){$\cof(\mathcal{M}_\kappa)$}}
\put(350,90){\makebox(80,20){$\mathfrak{d}(\kappa)$}}
\put(350,10){\makebox(80,20){$\cov(\mathcal{M}_\kappa)$}}
\put(200,10){\makebox(80,20){$\add(\mathcal{M}_\kappa)$}}
\put(200,90){\makebox(80,20){$\mathfrak{b}(\kappa)$}}
\put(200,170){\makebox(80,20){$\non(\mathcal{M}_\kappa)$}}
\put(80,10){\makebox(80,20){$\kappa^+$}}

\end{picture}
\end{center}
\caption{Generalization of Cich\'{o}n's diagram (for $\kappa$ strongly inaccessible)}
\label{fig:cd}
\end{figure}

Also, the well known relationships between the classical cardinal invariants (See \cite{AB:CCC}) hold , namely:

\begin{lem}\label{maxmin} \hfill \\
$\add(\calM_\ka)=\min\{\frb(\ka),\cov(\calM_\ka)\}$ and $\cof(\calM_\ka)=\max\{\frd(\ka),\non(\calM_\ka)\}$.
\end{lem}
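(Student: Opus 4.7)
My plan is to adapt the classical proof due to Miller and Bartoszy\'{n}ski. The crucial combinatorial input, available since we are assuming $\ka$ strongly inaccessible (hence $\ka^{<\ka} = \ka$), is a ``Bartoszy\'{n}ski-style'' characterization of meager sets in $2^\ka$: a set $M \subseteq 2^\ka$ is meager if and only if there exist a strictly increasing function $f \in \ka^\ka$ (coding an interval partition $I_\alpha = [f(\alpha), f(\alpha+1))$ of $\ka$) and a point $x \in 2^\ka$ such that every $y \in M$ satisfies $y \restriction I_\alpha \neq x \restriction I_\alpha$ for all sufficiently large $\alpha < \ka$. This $\ka$-analogue of the classical characterization is carried out in \cite{BBFM:Icho}; I would cite it as a black box.

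Granting this, the two trivial inequalities $\add(\calM_\ka) \leq \cov(\calM_\ka)$ and $\non(\calM_\ka) \leq \cof(\calM_\ka)$ hold by the definitions of additivity and cofinality of an ideal. Next, for $\add(\calM_\ka) \leq \frb(\ka)$: given a $\leq^*$-unbounded family $\{g_\xi : \xi < \frb(\ka)\}$, I associate to each $g_\xi$ a meager set $M_\xi \subseteq 2^\ka$ via the characterization applied to a partition refining $g_\xi$ and an arbitrary $x$; if $\bigcup_\xi M_\xi$ were meager, the single witness function $f$ would yield an eventual upper bound for all $g_\xi$, contradicting unboundedness. Dually, $\frd(\ka) \leq \cof(\calM_\ka)$ is obtained by reading off, from any cofinal family in $\calM_\ka$, a dominating family via the partition functions appearing in the characterizations of its members.

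The two remaining inequalities are the more delicate. For $\min\{\frb(\ka), \cov(\calM_\ka)\} \leq \add(\calM_\ka)$: given $\lambda < \min\{\frb(\ka),\cov(\calM_\ka)\}$ and meager sets $\{M_\xi\}_{\xi<\lambda}$ with witnesses $(f_\xi, x_\xi)$, I use $\lambda < \frb(\ka)$ to find a single strictly increasing $f^* \in \ka^\ka$ whose induced partition coarsely refines each $f_\xi$'s cofinally; rewriting each $M_\xi$ relative to the common partition $f^*$ replaces $x_\xi$ with some $x_\xi^*$. Since $\lambda < \cov(\calM_\ka)$, the family of meager sets $\{N_\xi\}_{\xi<\lambda}$ of points that agree with $x_\xi^*$ on $I^*_\alpha$ for a cofinal set of $\alpha$ does not cover $2^\ka$, and any $x^*$ outside this union serves as a simultaneous witness that $\bigcup_\xi M_\xi$ is meager via the pair $(f^*, x^*)$. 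The dual argument yields $\cof(\calM_\ka) \leq \max\{\frd(\ka), \non(\calM_\ka)\}$: take a dominating family $\{g_\zeta : \zeta < \frd(\ka)\}$ and a non-meager witness set $X \subseteq 2^\ka$ of size $\non(\calM_\ka)$; the collection of meager sets coded by pairs $(g_\zeta, x)$ with $\zeta < \frd(\ka)$ and $x \in X$ is cofinal in $\calM_\ka$, since any meager $M$ is witnessed by some $(f, y)$, one can dominate $f$ by some $g_\zeta$, and non-meagerness of $X$ guarantees a suitable $x \in X$ whose corresponding coded meager set contains $M$.

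The main obstacle is the Bartoszy\'{n}ski characterization for $\ka$; but as this is established in \cite{BBFM:Icho}, the remaining manipulations are essentially mechanical translations of the countable proof, with $<\ka$ replacing finite and $\leq^*$ interpreted modulo sets of size $<\ka$. One should only check that cofinalities of limit ordinals $<\ka$ used in refining interval partitions do not exceed $\ka$, which is automatic for strongly inaccessible $\ka$.
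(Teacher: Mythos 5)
First, a remark on context: the paper itself offers no proof of this lemma at all --- it is stated as a ``well known'' generalization of the classical Miller--Truss and Fremlin theorems, with the countable case cited to \cite{AB:CCC} and the $\kappa$-analogues deferred to \cite{BBFM:Icho}. Your overall plan --- black-boxing the $\kappa$-version of the Talagrand/Bartoszy\'nski ``chopped real'' characterization of meagerness and then running the four inequalities --- is exactly the route taken in that reference, and your sketches of the easy inequalities ($\add(\calM_\ka)\le\cov(\calM_\ka)$, $\non(\calM_\ka)\le\cof(\calM_\ka)$, $\add(\calM_\ka)\le\frb(\ka)$, $\frd(\ka)\le\cof(\calM_\ka)$) are fine.

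There is, however, a genuine gap in your treatment of the two non-trivial inequalities: the quantifiers are in the wrong order, and as written the final verification fails. For $\min\{\frb(\ka),\cov(\calM_\ka)\}\le\add(\calM_\ka)$ you first fix the coarse partition $f^*$ and then try to find $x^*$; but for a fixed $f^*$ the set of $x^*$ that actually witnesses $M_\xi\subseteq M_{(f^*,x^*)}$ --- those $x^*$ which on all but boundedly many blocks $I^*_\alpha$ agree with $x_\xi$ on some entire $f_\xi$-block contained in $I^*_\alpha$ --- is \emph{meager}, so $\lambda<\cov(\calM_\ka)$ cannot produce one. (Relatedly, your set $N_\xi$ of points agreeing with $x^*_\xi$ on cofinally many $I^*_\alpha$ is comeager, not meager; and even after fixing the polarity, mere cofinal agreement of $x^*$ with $x_\xi$ is insufficient, since a $y$ matching $x^*$ cofinally may do so only on blocks where $x^*$ and $x_\xi$ disagree.) The standard repair reverses the steps: first use $\lambda<\cov(\calM_\ka)$ to find a single $x^*$ agreeing with each $x_\xi$ on cofinally many blocks of the \emph{original} partition $f_\xi$ (for each $\xi$ this set of $x^*$ is comeager), record the increasing enumeration of those agreement blocks, and only then use $\lambda<\frb(\ka)$ to find $f^*$ each of whose blocks eventually contains a recorded agreement block for every $\xi$. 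The same reversal is needed for $\cof(\calM_\ka)\le\max\{\frd(\ka),\non(\calM_\ka)\}$: given a meager set coded by $(f,y)$, one must first choose $x\in X$ matching $(y,f)$ and then choose $g_\zeta$ dominating the enumeration of the blocks on which $x$ and $y$ agree; this is also why the cofinal family must be indexed by all pairs in the product of the dominating family with $X$. With this reordering the translation to $\ka$ under $\ka^{<\ka}=\ka$ is indeed routine, as you say.
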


\section{Applications}

Until the end of the paper let $\kappa$, $\kappa^*$, $\Gamma$, $\alpha$ and $\mathbb{P}^*$ be fixed as in Theorem~\ref{thm_uc}.

\begin{thm}\label{thm_uc_simple} Let $G$ be $\PP^*$-generic. Then $V[G]$ satisfies
$\add(\calM_\ka)=\cof(\calM_\ka)=\non(\calM_\ka)=\cov(\calM_\ka)=
\frs(\ka)=\frr(\ka)=\frd(\ka)=\frb(\ka)=\ka^*$.
\end{thm}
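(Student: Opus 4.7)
The iteration $\PP^*=\PP_\alpha$ has $\cf(\alpha)=\ka^*$ by construction of $S$, and the analogue of Corollary~\ref{smallinit} applied to $\PP^*$ tells us that any family of fewer than $\ka^*$ subsets of $\ka$ in $V[G]$ is already contained in some intermediate $V[G_\beta]$, $\beta<\alpha$. This ``capture below, kill above'' paradigm drives all the lower bounds. First I would show $\frb(\ka)=\frd(\ka)=\ka^*$: given $\calF\subseteq\ka^\ka$ of size $<\ka^*$ captured in $V[G_\beta]$, the Mathias generic $\dot{x}_\gamma$ at any later even stage $\gamma$ yields a dominating function (by the Corollary after the equivalence $\bbM^\ka_\calU\simeq\bbL^\ka_\calU$ for normal $\calU$), giving $\frb(\ka)\ge\ka^*$; conversely, $\{\dot{x}_\gamma:\gamma\in S\}$ is itself a dominating family of size $\ka^*$, so $\frd(\ka)\le\ka^*$. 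Then Proposition~\ref{mm} together with Theorem~\ref{thm_uc} pins down $\frr(\ka)=\ka^*$ via $\frb(\ka)\le\frr(\ka)\le\fru(\ka)=\ka^*$.

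Next I would handle $\frs(\ka)=\ka^*$. For the lower bound, capture $\calS\subseteq[\ka]^\ka$ of size $<\ka^*$ in $V[G_\beta]$ and pick an even $\gamma\in(\beta,\alpha)$; since $\calU_\gamma$ is a normal ultrafilter in $V[G_\gamma]$ it decides each $S\in\calS$, and the Mathias generic $\dot{x}_\gamma$ is almost contained in every element of $\calU_\gamma$, hence not split by any $S\in\calS$. For the upper bound I would use $\frs(\ka)\le\frd(\ka)=\ka^*$ via the standard alternating-intervals construction (iterate a dominating $d$ along $\ka$ and take a set of alternating intervals) applied to the dominating family $\{\dot{x}_\gamma:\gamma\in S\}$.

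For the four meager cardinals, Figure~\ref{fig:cd} already gives $\cov(\calM_\ka)\le\frd(\ka)=\ka^*$ and $\non(\calM_\ka)\ge\frb(\ka)=\ka^*$, so Lemma~\ref{maxmin} combined with $\frb(\ka)=\frd(\ka)=\ka^*$ reduces everything to proving $\cov(\calM_\ka)\ge\ka^*$ and $\non(\calM_\ka)\le\ka^*$. Both would follow from showing that cofinally often the iteration adds a real in $2^\ka$ avoiding all $V[G_\beta]$-coded meager sets: a $\ka^*$-indexed family of such reals is non-meager, and fewer than $\ka^*$ meager sets are captured at some $V[G_\beta]$ and then missed by any subsequent such real. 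I expect this meager-avoidance step to be the main obstacle, since Laver-type forcings at $\om$ classically do not add Cohen reals; the proof will presumably exploit structural features specific to the $\ka$-setting (e.g.\ interactions between the Ramsey-like property of normal $\calU$ and the iterands at odd stages), or interleave a $\ka$-Cohen-like forcing at odd stages, which is permitted since $\ka$-Cohen is both $\ka$-centred and $\ka$-directed closed under $\ka^{<\ka}=\ka$.
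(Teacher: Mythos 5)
Your proof follows essentially the same architecture as the paper's: capture any family of size $<\kappa^*$ in some $V[G_\beta]$ using Corollary~\ref{smallinit} together with $\cf(\alpha)=\kappa^*$, use the Mathias generics (dominating by the corollary to the $\bbM^\ka_\calU\simeq\bbL^\ka_\calU$ equivalence, and unsplit) for the lower bounds on $\frb(\ka)$ and $\frs(\ka)$, get $\frr(\ka)$ from Proposition~\ref{mm}, and reduce the four meager invariants to $\frb(\ka)=\frd(\ka)=\ka^*$ plus Lemma~\ref{maxmin}. There are two points of divergence worth recording. First, for $\frs(\ka)\le\kappa^*$ the paper exhibits a cofinal $\kappa^*$-sequence of $\kappa$-Cohen reals as a splitting family, whereas you invoke the ZFC inequality $\frs(\ka)\le\frd(\ka)$ via the alternating-intervals argument; this does go through for regular $\kappa$ (take the intervals between consecutive closure points of a dominating function), and it has the advantage of not needing Cohen reals at all for this item. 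Second, for $\non(\calM_\ka)\le\kappa^*$ and $\cov(\calM_\ka)\ge\kappa^*$ the paper simply asserts the existence of a cofinal $\kappa^*$-sequence of $\kappa$-Cohen reals; the implicit justification is the standard fact that a ${<}\kappa$-support iteration of nontrivial posets adds a $\kappa$-Cohen real at every limit stage of cofinality $\kappa$ (the generalization of the finite-support/$\omega$ phenomenon), and such stages are cofinal in $\alpha$ since $\cf(\alpha)=\kappa^*>\kappa$. So your concern that the iterands themselves (Laver-type or Mathias-type) might not add Cohen reals, while reasonable, is beside the point --- the Cohen reals come from the limit stages, not the iterands --- and your fallback of interleaving $\kappa$-Cohen forcing at odd stages, though legitimate (it is $\kappa$-centered and $\kappa$-directed closed when $\kappa^{<\kappa}=\kappa$), is unnecessary and would also narrow the statement, which concerns arbitrary admissible odd-stage iterands.
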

\begin{proof}
Note that $\frb(\ka)\geq\ka^*$ because any set  of functions in $\ka^\ka$ of size $<\ka^\ast$ appears in some initial part of the iteration (by Lemma~\ref{smallinit}) and so is dominated by the Mathias generic functions added at later stages. On the other hand, any cofinal sequence of length $\kappa^*$ of the Mathias generics forms a dominating family. Thus $\mathfrak{d}(\kappa)\leq\kappa^*$ and since clearly $\mathfrak{b}(\kappa)\leq\mathfrak{d}(\kappa)$, we obtain $V^{\mathbb{P}^*}\vDash \mathfrak{b}(\kappa)=\mathfrak{d}(\kappa)=\kappa^*$.

To see that $\mathfrak{s}(\kappa)\geq\kappa^*$, observe that the Mathias generic subsets of $\kappa$ are unsplit and that every family of $\kappa$-reals of size $<\kappa$ is contained in $V^{\mathbb{P}_\beta}$ for some $\beta<\alpha$. On the other hand any cofinal sequence of length $\kappa^*$ of $\kappa$-Cohen reals forms a splitting family and so $V^{\mathbb{P}^*}\vDash \mathfrak{s}(\kappa)\leq\kappa^*$. Thus $V^{\mathbb{P}^*}\vDash \mathfrak{s}(\kappa)=\kappa^*$. That $\frr(\ka)=\ka^\ast$ follows from Proposition~\ref{mm}.

To verify the values of the characteristics associated to $\mathcal{M}_\kappa$, proceed as follows. Since $\mathfrak{b}(\ka) \leq \non(\calM_\ka)$,
$V^{\mathbb{P}^*}\vDash \kappa^*\leq\non(\calM_\ka)$. On the other hand any cofinal sequence of $\kappa$-Cohen reals of length $\kappa^*$ is a witness to $\non(\calM_\ka) \leq\ka^\ast$, since this set of $\kappa$-Cohen reals is non-meager. By a similar argument and the fact that $\mathfrak{d}(\kappa)=\kappa^*$ in $V^{\mathbb{P}^*}$, we obtain that $V^{\mathbb{P}^*}\vDash\cov(\calM_\ka)=\ka^*$. Now, Lemma~\ref{maxmin} implies that $\add(\calM_\ka)=\ka^*=\cof(\calM_\ka)$.
\end{proof}

Now, we are ready to prove our main theorem.

\begin{thm}\label{thm_main}
Suppose $\kappa$ is a supercompact cardinal, $\ka^\ast$ is a regular cardinal with
$\kappa <\kappa^{\ast} \leq \Gamma$ and $\Gamma$ satisfies $\Gamma^\kappa= \Gamma$. Then there is forcing extension in which cardinals have not been changed satisfying:
\begin{align*}
\ka^*&=\fru(\ka)=\frb(\ka)=\frd(\ka)=\fra(\ka)=\frs(\ka)=\frr(\ka)=\cov(\calM_\ka)\\
&=\add(\calM_\ka) =\non(\calM_\ka)=\cof(\calM_\ka) \;\hbox{and}\;2^\kappa=\Gamma.
\end{align*}
If in addition $\gamma<\kappa^*\rightarrow\gamma^{<\kappa}<\kappa^*$, then we can also provide that $\mathfrak{i}(\ka)= \ka^\ast$. If in addition $(\Gamma)^{<\kappa^*}\leq\Gamma$ then we can also provide that $\frp(\ka)=\frt(\ka)=\frh(\ka)=\kappa^*$.
\end{thm}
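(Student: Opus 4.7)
The plan is to exploit the freedom already built into the construction of $\mathbb{P}^*$: at each odd stage we may interleave any $\kappa$-centered, $\kappa$-directed-closed forcing of size at most $\Gamma$ without disturbing the main argument for $\mathfrak{u}(\kappa)=\kappa^*$ of Theorem~\ref{thm_uc} or the values already established in Theorem~\ref{thm_uc_simple}. So Theorem~\ref{thm_uc_simple} takes care of $\frb(\ka)=\frd(\ka)=\frs(\ka)=\frr(\ka)=\cov(\calM_\ka)=\add(\calM_\ka)=\non(\calM_\ka)=\cof(\calM_\ka)=\ka^*$ and $\fru(\ka)=\ka^*$, $2^\ka=\Gamma$. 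What remains is $\fra(\ka)\leq\ka^*$, $\fri(\ka)\leq\ka^*$ (under the first extra hypothesis), and $\frp(\ka)\geq\ka^*$ (under the second); the matching inequalities $\fra(\ka)\geq\frb(\ka)=\ka^*$, $\fri(\ka)\geq\frd(\ka)=\ka^*$ (Proposition~\ref{indep}, whose hypothesis is exactly $\gamma^{<\ka}<\ka^*$ for $\gamma<\ka^*=\frd(\ka)$), and $\frp(\ka)\leq\frt(\ka)\leq\frh(\ka)\leq\frs(\ka)=\ka^*$ (Lemma~\ref{Lema36} and Proposition~\ref{prop31}) then close the chain.

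For $\fra(\ka)=\ka^*$, I would split a cofinal set of odd coordinates of order type $\ka^*$ into two cofinal pieces $T_0,T_1$. Along $T_0$ I build a $\ka$-ad family $\mathcal{A}=\{x_i:i<\ka^*\}$ by forcing at the $i$-th stage with $\bar{\QQ}(\mathcal{A}_{<i},\ka)$ (Definition~\ref{mad_poset}), obtaining a new $\ka$-sized set almost-disjoint from everything previous. Along $T_1$ I run a standard bookkeeping function that enumerates all nice $\PP^*$-names for sets $Y\in[\ka]^\ka$; by Corollary~\ref{smallinit} every such $Y$ is captured by some initial segment $V[G_\beta]$ with $\beta<\alpha$, so the bookkeeping can be arranged to pick up each name at some later odd stage. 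At that stage, if the name is forced to lie outside the ideal $\mathcal{I}_{\mathcal{A}_{<i}}$, Proposition~\ref{mad} guarantees that the next generically added $x_i$ meets $Y$ in a set of size $\ka$. Hence no $Y$ witnesses non-maximality of the eventual $\mathcal{A}$.

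For $\fri(\ka)=\ka^*$, I apply an entirely analogous scheme, interleaving the $\ka$-centered forcings $\hat{\QQ}(\mathcal{I}_{<i},\ka)$ of Lemma~\ref{i} along a cofinal set of odd stages of order type $\ka^*$; the bookkeeping uses part (b) of that lemma to kill all potential extensions $Z\in[\ka]^\ka$ that would witness non-maximality, while part (a) keeps the family independent. The hypothesis $\gamma^{<\ka}<\ka^*$ is needed to ensure we can apply Lemma~\ref{i} at stages where $|\mathcal{I}_{<i}|$ is of size $\ka$ (pad with fresh independent sets until the family reaches size $\ka$ before each diagonalization round), and also to enumerate all the relevant names.

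For $\frp(\ka)\geq\ka^*$, the point is that every $\mathcal{F}\subseteq[\ka]^\ka$ of size $<\ka^*$ with the SIP appears in some intermediate extension $V[G_\beta]$ (by $\ka^+$-cc of the initial segments and Corollary~\ref{smallinit} applied to a name for a bijection between $|\mathcal{F}|$ and $\mathcal{F}$). The $\ka$-complete filter $\hat{\mathcal{F}}$ generated by $\mathcal{F}$ is then available in $V[G_\beta]$, and the generalized Mathias forcing $\bbM^\ka_{\hat{\mathcal{F}}}$ (which is $\ka$-centered and $\ka$-directed closed) generically adds a $\ka$-sized pseudointersection to $\mathcal{F}$. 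So interleave all such Mathias posets along a cofinal set of odd stages using bookkeeping; the hypothesis $\Gamma^{<\ka^*}\leq\Gamma$ controls the number of such names to be handled, keeping them within the $\Gamma$ many slots available in conditions of $\PP^*$.

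The main obstacle is the bookkeeping itself: three simultaneous diagonalization schemes (for $\fra$, $\fri$, $\frp$) must be interleaved along the odd coordinates without colliding with the ultrafilter construction of Lemma~\ref{ultrafilter}, while still fitting into the truncated iteration $\PP^*=\PP_\alpha$ with $\cf(\alpha)=\ka^*$. The cardinal-arithmetic hypotheses $\gamma^{<\ka}<\ka^*$ and $\Gamma^{<\ka^*}\leq\Gamma$ are precisely what is needed to count the candidate objects and verify that cofinal bookkeeping catches every one of them in $V[G_\alpha]$.
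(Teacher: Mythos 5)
Your proposal is correct and follows essentially the same route as the paper: interleave $\bar{\QQ}(\mathcal{A},\kappa)$, $\hat{\QQ}(\mathcal{I},\kappa)$ and generalized Mathias posets along cofinal sets of odd stages of the truncated iteration, use Corollary~\ref{smallinit} to catch every candidate witness in an initial segment, and close the chains with $\frb(\ka)\leq\fra(\ka)$, Proposition~\ref{indep}, Lemma~\ref{Lema36} and Proposition~\ref{prop31}. The only cosmetic differences are that the paper runs the mad-family and independent-family forcings as a two-step iterand at the same cofinal stages and needs no bookkeeping for $\fra(\ka)$ and $\fri(\ka)$, since Proposition~\ref{mad} and Lemma~\ref{i}(b) kill all ground-model witnesses of the relevant intermediate model at once, and that the hypothesis $\gamma^{<\kappa}<\kappa^*$ is used only for the lower bound via Proposition~\ref{indep}, not for applying Lemma~\ref{i}.
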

\begin{proof}
We will modify to iteration $\PP^*$ to an iteration $\bar{\PP}^*$ by specifying the iterands $\dot{\QQ}_j$ for every odd ordinal $j<\alpha$. It is easy to verify that those cardinal characteristics which were evaluated in the model of Theorem~\ref{thm_uc_simple} will have the same value $\kappa^*$ in $V^{\bar{\PP}}$. Let $\bar{\gamma}=\langle \gamma_i\ra_{i<\kappa^*}$ be a strictly increasing cofinal in $\alpha$ sequence of odd ordinals. The stages in $\bar{\gamma}$ will be used to add a $\kappa$-maximal almost disjoint family of size $\kappa^*$, as well as a $\kappa$-maximal independent family of size $\kappa^*$.

If $\Gamma^{<\kappa^*}\leq\Gamma$, then using an appropriate bookkeeping function $F$ with domain the odd ordinals in $\alpha$ which are not in the cofinal sequence $\bar{\gamma}$ we can use the generalized Mathias poset to add pseudointersections to all filter bases of size $<\kappa^*$ with the SIP. In case $\Gamma^{<\kappa^*}\not\leq\Gamma$, just take for odd stages which are not in $\bar{\gamma}$ arbitrary $\kappa$-centered, $\kappa$-directed closed forcing notions of size at most $\Gamma$.

To complete the definition of $\bar{\PP}^*$ it remains to specify the stages in $\bar{\gamma}$. For each $i<\kappa^*$, in $V^{\bar{\mathbb{P}}^*}_{\gamma_i}$ the poset $\dot{\QQ}_{\gamma_i}$ will be defined to be of the form $\QQ_{\gamma_i}=\QQ_{\gamma_i}^0*\dot{\QQ}_{\gamma_i}^1$. Fix a ground model $\kappa$-ad family $\mathcal{A}_0$ of size $\kappa$ and a ground model $\kappa$-independent family $\mathcal{I}_0$ of size $\kappa$. Let $\mathbb{Q}^0_{\gamma_0}=\bar{\mathbb{Q}}(\mathcal{A}_0,\kappa)$ (see Definition~\ref{mad_poset}) and in $V^{\bar{\PP}^*_{\gamma_0}*\dot{\QQ}_{\gamma_0}^0}$ let $\QQ^1_{\gamma_0}=\hat{\QQ}(\mathcal{I}_0,\kappa)$ (see Lemma~\ref{i}). Now,
fix any $i<\kappa^*$ and suppose that $\forall j<i$, $\QQ^0_{\gamma_j}=\bar{\QQ}(\mathcal{A}_j,\kappa)$ adds a generic subset $\bar{x}_{\gamma_j}$ of $\kappa$ where $\mathcal{A}_j=\mathcal{A}_0\cup\{\bar{x}_{\gamma_k}\}_{k<j}$ and that the poset $\QQ^1_{\gamma_j}=\hat{\QQ}(\mathcal{I}_j,\kappa)$ adds a subset $\hat{x}_{\gamma_j}$ of $\kappa$ where $\mathcal{I}_j=\mathcal{I}_0\cup\{\hat{x}_{\gamma_k}\}_{k<j}$. In $V^{\bar{\PP}^*_{\gamma_i}}$ let
$\QQ^0_{\gamma_i}=\bar{\QQ}(\mathcal{A}_i,\kappa)$ where $\mathcal{A}_i=\mathcal{A}_0\cup\{\bar{x}_{\gamma_j}\}_{j<i}$ and in $V^{\bar{\PP}^*_{\gamma_i}*\dot{\QQ}^0_{\gamma_i}}$ let $\QQ^1_{\gamma_i}=\hat{\QQ}(\mathcal{I}_i,\kappa)$ where $\mathcal{I}_i=\mathcal{I}_0\cup\{\hat{x}_{\gamma_j}\}_{j<i}$.

With this the recursive definition of the iteration $\bar{\PP}^*$ is defined. In $V^{\bar{\PP}^*}$ let $\mathcal{A}_*=\mathcal{A}_0\cup\{\bar{x}_{\gamma_j}\}_{j<\kappa^*}$ and let $\mathcal{I}_*=\mathcal{I}_0\cup\{\hat{x}_{\gamma_j}\}_{j<\kappa^*}$. We will show that $\mathcal{A}_*$ and $\mathcal{I}_*$ are a $\kappa$-mad and a $\kappa$-maximal independent families respectively. Clearly $\mathcal{A}_*$ is $\kappa$-ad and $\mathcal{I}_*$ is $\kappa$-independent. To show maximality of $\mathcal{A}_*$, consider an arbitrary $\bar{\PP}^*$-name $\dot{X}$ for a subset of $\kappa$ and suppose $\Vdash_{\bar{\PP}^*} (\{\dot{X}\}\cup\mathcal{A}_*\;\hbox{is }\kappa\hbox{-ad})$. By Lemma~\ref{smallinit} $\dot{X}$ can be viewed as a $\bar{\PP}^*_\beta$-name for some $\beta<\alpha$. Then for  $\gamma_j>\beta$, by Lemma~\ref{mad} we obtain $V^{\bar{\mathbb{P}^*}_\beta} \models \lvert \bar{x}_{\gamma_j}\cap\dot{X} \lvert = \kappa$, which is a contradiction.
Thus $\mathcal{A}_*$ is indeed maximal and so $\mathfrak{a}(\kappa)\leq\kappa^*$. However in $V^{\bar{\PP}^*}$, $\mathfrak{b}(\kappa)=\kappa^*$ and since $\mathfrak{b}(\kappa)\leq\mathfrak{a}(\kappa)$ we obtain $V^{\bar{\PP}^*}\vDash \mathfrak{a}(\kappa)=\kappa^*$.

To see that $\mathcal{I}_*$ is maximal, argue in a similar way. Consider arbitrary $\bar{\PP}^*$-name $\dot{X}$ for a subset of $\kappa$ such that $\Vdash_{\bar{\PP}^*} \{\dot{X}\}\cup\mathcal{I}_*\;\hbox{is independent}$. Then there is $\beta<\alpha$ such that we can see $\dot{X}$ as a $\bar{\PP}^*_\beta$-name. Let $\gamma_j>\beta$. Then by Lemma~\ref{i}, in $V^{\bar{\PP}^*_{\gamma_j+1}}$ the family $\{\hat{x}_{\gamma_j}\}\cup \mathcal{I}_{\gamma_j}\cup\{X\}$ is not independent, which is a contradiction. Thus $\mathcal{I}_*$ is maximal and so $\mathfrak{i}(\kappa)\leq\kappa^*$. On the other hand if whenever $\gamma<\kappa^*$ we have $\gamma^{<\kappa}<\kappa^*$, then $\mathfrak{d}(\kappa)\leq\mathfrak{i}(\kappa)$ (by Lemma~\ref{indep}) and since in $V^{\bar{\PP}^*}$,  $\mathfrak{d}(\kappa)=\kappa^*$, we obtain $V^{\bar{\PP}^*}\vDash \mathfrak{i}(\kappa)=\kappa^*$.

Suppose $\Gamma^{<\kappa^*}\leq\Gamma$. In this case, every filter of size $<\kappa^*$ with the SIP has a pseudointersection in $V^{\bar{\PP}^*}$. Thus in the final extension $\mathfrak{p}(\kappa)\geq\kappa^*$. However $\mathfrak{p}(\kappa)\leq\mathfrak{t}(\kappa)\leq\mathfrak{s}(\kappa)$ and since $V^{\bar{\PP}^*}\vDash \mathfrak{s}(\kappa)=\kappa^*$, we obtain that $\mathfrak{p}(\kappa)=\mathfrak{t}(\kappa)=\kappa^\ast$. By Proposition~\ref{prop31}, $\mathfrak{h}(\kappa)\leq\mathfrak{s}(\kappa)=\kappa^\ast$ and $\kappa^\ast=\mathfrak{t}(\kappa)\leq\mathfrak{h}(\kappa)$. Thus $\mathfrak{h}(\kappa)=\kappa^\ast$.
\end{proof}

The above iteration can be additionally modified so that in the final extension the minimal size of a $\kappa$-maximal cofinitary group, $\mathfrak{a}_g(\kappa)$, is $\kappa^\ast$. Indeed, one can use the stages in $\bar{\gamma}$ and \cite[Definition 2.2.]{VF:MCGR} to add a $\kappa$-maximal cofinitary group of size $\kappa^\ast$. The fact that $\kappa^\ast\leq\mathfrak{a}_g(\kappa)$ follows from $\mathfrak{b}(\kappa)\leq\mathfrak{a}_g(\kappa)$ (see~\cite{BHZ:MFN}).\\

{\emph{Acknowledgements}}:
The first author would like to thank for their generous support both
the Japan Society for the Promotion of Science (JSPS) through
a JSPS Postdoctoral Fellowship for
Foreign Researchers and
JSPS Grant-in-Aid 23 01765, and the UK Engineering and Physical Sciences
Research Council through an EPSRC Early Career Fellowship,
reference EP/K035703/1.
The last three authors would like to thank the Austrian Science Fund (FWF) for their generous support through grants number
M1365 - N13 (V. Fischer) and FWF Project P25748 (S. D. Friedman and D. Montoya).





\section*{References}

\end{document}